\documentclass[12pt,reqno]{amsart}

\usepackage{amsmath}
\usepackage{amsfonts}
\usepackage{amssymb}
\usepackage{amscd}
\usepackage{hyperref,color}

\newcommand{\func}{\operatorname}
\renewcommand{\bot}{\perp}
\newtheorem{theorem*}{}
\newtheorem{theorem}{Theorem}
\newtheorem{corollary}[theorem]{Corollary}
\newtheorem{lemma}[theorem]{Lemma}
\newtheorem{proposition}[theorem]{Proposition}

\newtheorem{definition}[theorem]{Definition}
\newtheorem{remark}[theorem]{Remark}
\newtheorem{notation}[theorem]{Notation}

\numberwithin{equation}{section}
\newcommand{\limfunc}{\operatorname}

\begin{document}
\title{Laplace and Dolbeault operators on Sasakian manifolds}

\author[G.~Habib]{Georges Habib}
\address{Lebanese University \\
Faculty of Sciences II \\
Department of Mathematics\\
P.O. Box 90656 Fanar-Matn \\
Lebanon\\ 
And Universit\'e de Lorraine\\
CNRS, IECL\\
54506 Nancy, France}
\email[G.~Habib]{ghabib@ul.edu.lb}
\author[K.~Richardson]{Ken Richardson}
\address{Department of Mathematics \\
Texas Christian University \\
Fort Worth, Texas 76129, USA}
\email[K.~Richardson]{k.richardson@tcu.edu}
\author[R.~Wolak]{Robert Wolak}
\address{Wydzial Matematyki i Informatyki UJ \\
Ul. prof. Stanislawa Lojasiewicza 6 \\
30-348 Krakow, Poland}
\email[R.~Wolak]{robert.wolak@uj.edu.pl}

\begin{abstract}
    Sasakian manifolds, the odd-dimensional analogues 
    of K\"ahler manifolds, carry two natural pairs of 
    first-order Dolbeault-type operators on the full 
    complex of differential forms, extending 
    the Kohn--Rossi differentials. On 
    such Sasaki manifolds, we establish K\"ahler-type 
    identities for these operators, and relate the 
    resulting Dolbeault Laplacians to the Hodge 
    Laplacian. Unlike the K\"ahler case, where 
    $\Delta=2\Delta_{\overline\partial}$, the relation 
    has extra terms coming from the Reeb flow. 
    Using these formulas and a Lefschetz decomposition,
    we derive lower and upper eigenvalue 
    estimates for $\Delta$ on forms depending on the 
    eigenvalues of the Lie derivative of the Reeb 
    vector field.
\end{abstract}

\subjclass[2020]{53C25; 58J10; 32V20; 53C12; 32Q15; 35P15}

\keywords{ Sasakian manifolds, K\"ahler manifolds, Laplacian, Dolbeault operator, harmonic forms, spectrum, basic cohomology, Kohn-Rossi cohomology, Dolbeault cohomology}

\maketitle
\tableofcontents

\section{Introduction}
Sasakian manifolds, introduced by Sasaki \cite{Sasaki1960OnDiffble} in 1960, can be understood as odd dimensional counterparts  of K\"{a}hler manifolds.  In  the 1990s, new   physical conjectures  brought a renewed interest in Sasakian geometry. Of particular interest is  the Maldacena
conjecture, cf.  \cite{Maldacena1998LargeNSCFT},  on the duality between conformal field theory and supergravity on anti-de-Sitter space-time. The Maldacena conjecture, also called AdS/CFT correspondence, proposes that a universe with gravity in higher dimensions is mathematically equivalent to a flat quantum universe with no gravity in lower dimensions. Thus it provides a relation between the string theory and the quantum field theory. For the review of recent developments in the theory see \cite{gauntlett2025sasakieinsteingeometrygkgeometry}, and some new results  e.g. \cite{MartelliSparksYau2008}, \cite{GenoliniPerezSparks2019}.





Let $(M^{2n+1},g,\xi )$ be a Sasakian manifold. 
This means that the vector field $\xi$ is a unit Killing vector field such that
the endomorphism $\phi :TM\rightarrow TM$ defined by $\phi \left( \bullet \right) :=\nabla _{\bullet }\xi$ satisfies, for $X,Y\in \Gamma \left( TM\right) $, the relations
\begin{eqnarray*}
\phi ^{2} &=&-I+\eta \otimes \xi  \\
\left( \nabla _{X}\phi \right) \left( Y\right) &=&\eta \left( Y\right)
X-g\left( X,Y\right) \xi 
\end{eqnarray*}%
where $\eta=\xi^\flat$ is the form associated to $\xi$ by the musical isomorphism. Here $\nabla$ is the Levi-Civita connection on $M$. The second identity, known as the normality condition, can be equivalently expressed as $\nabla_X d\eta=2\eta\wedge X^\flat$. An easy consequence of the definition is that the following identity
$$g\left( \phi X,\phi Y\right) =g\left( X,Y\right) -\eta \left( X\right)\eta \left( Y\right),$$
holds, for all vector fields $X, Y$. 

\begin{remark}
We are using notation consistent with that used in, for example, \cite%
{Fujitani1966ComplexValuedDiffFormsSasaki} and \cite%
{Cappelletti-MontanoEtAl2015HardLefschetzSasakian}. Other researchers use
metrics that differ from ours by a constant, e.g.  \cite%
{Blair2010RiemGeomContactSymplMflds}. We also note that some authors choose
the opposite sign for the transverse complex structure $\phi \left( \bullet
\right) =-\nabla _{\bullet }\xi $, corresponding to the normality condition $%
\left( \nabla _{X}\phi \right) \left( Y\right) =g\left( X,Y\right) \xi -\eta
\left( Y\right) X$. We made our choice so that the resulting K\"{a}hler identities have the
same sign as on K\"{a}hler manifolds.
\end{remark}

The foliation by the integral curves of $\xi$ gives rise to a transverse K\"{a}hler foliation whose complex structure is the endomorphism $\phi$. In particular, it implies that any embedded manifold transverse to the foliation inherits the canonical K\"{a}hler structure. An equivalent definition for a Sasakian manifold is that the Riemannian cone $(C(M)=(0,\infty)\times M, dt^2+t^2g)$ is a K\"ahler manifold. 
Over the years Sasakian manifolds have been studied as

\begin{enumerate}
\item[(I)] foliated Riemannian manifolds with a very particular 1-dimensional Riemannian foliation (isometric flow) with a transverse K\"{a}hler structure;

\item[(II)] a one-dimensional CR structure; 

\item[(III)]
a special codimension 1 submanifold of a K\"{a}hler manifold.
\end{enumerate}

The foliated approach concentrated on the investigation of the foliated (transverse) objects and their properties like basic cohomology, transverse Hodge theory, basic Dolbeault cohomology, basic Laplacian (acting on basic forms) and its spectrum. A lot of effort has been put into finding necessary and sufficient cohomological conditions for a foliated Riemannian manifold to be Sasakian. The CR approach is more general; in this setting, the emphasis is on horizontal forms and operators acting on them. In the cone approach, we consider operators which are restrictions of well-known operators in K\"{a}hler geometry to differential forms on  the Sasakian manifold seen as a codimension one submanifold.

In this paper, we provide a unified approach to various Dolbeault-type cohomologies studied in multiple previous works. We explain the origins  of the assumed definitions, prove numerous  useful formulas, both well-known or slightly more general as well as totally new ones. On the way we correct some mistakes and show that some of these formulas are true not only for Sasaki-Einstein manifolds but also for  general Sasakian manifolds. After establishing known and new identities, we develop a theory of Dolbeault type operators and associated cohomologies on Sasakian manifolds.


In Section~\ref{differentials section}, we introduce the fundamental objects on Sasakian manifolds. The complex  cotangent bundle of a Sasakian manifold of dimension $2n+1$ splits into 3 components of  dimensions $n$, $n$, and $1$,  respectively. This is derived from the splitting of the tangent bundle of the manifold  into the tangent and the normal bundle of the foliation, and then of the complex  normal bundle into the eigenbundles of  its complex structure. Thus, as the foliation is one-dimensional, the space $\Omega^m$ of complex-valued differential $m$-forms splits into a direct sum 
$$\bigoplus_{r+s=m} \Omega^{r,s,0} \oplus \bigoplus_{r+s=m-1} \Omega^{r,s,1}.$$
With this gradation, the differential $d$ has 4 components and splits as

$$d= d^{1,0,0}+d^{0,1,0}+d^{0,0,1}+d^{1,1,-1}.$$

 This is mainly the subject of Section~\ref{differentials section}, where we consider two different pairs 
 of Dolbeault-type operators acting on all forms, namely $ \partial    , \bar{ \partial  }$ and $ \partial    _{C}, \bar{ \partial    }_{C}$. These are extensions of the Kohn-Rossi differentials considered in \cite{KohnRossi1965ExtHolomFcnsToBndryCpxMfd}, which by definition act only on horizontal forms, i.e. on $\bigoplus_{r+s=k} \Omega^{r,s,0}$. The operators 
\begin{equation*}
\partial =d^{1,0,0},\qquad \overline{\partial }=d^{0,1,0}
\end{equation*}
are natural first-order differentials on the space of all forms, in analogy to the Dolbeault differentials on complex manifolds.

The basic forms of degree $m$ corresponding to the Riemannian foliation lie in $\bigoplus_{r+s=m} \Omega^{r,s,0}$, and the differential has just two components $d= \partial_b+\overline\partial_b$. Therefore the Dolbeault theory for basic forms of the characteristic foliation of a Sasakian manifold is not much different from that of a K\"{a}hler manifold. In fact,  this is the case for a more general situation: transversely K\"{a}hler, taut Riemannian foliations of a compact manifold.

The other pair of differentials has been investigated by J. Schmude, e.g. \cite{Schmude2014LaplaceOpsSasakiEinstMflds}, where  the author uses the Dolbeault operators which we denote by $\partial_C$ and $\overline{\partial_C}$, which turn out to be the restrictions of the Dolbeault operators on the cone $C(M)$ with the induced K\"ahler structure; see Section~\ref{appendix} for a detailed discussion. These operators can be represented as:

\begin{eqnarray*}
\partial _{C} &=&d^{1,0,0}+\frac{1}{2}d^{1,1,-1}=\partial + \frac 12(d\eta\wedge)(\xi\lrcorner),
\\
\overline{\partial _{C}}&=&d^{0,1,0}+\frac{1}{2}d^{1,1,-1}
=\overline\partial+ \frac 12(d\eta\wedge)(\xi\lrcorner).
\end{eqnarray*}%

 We formulate and prove numerous useful formulas involving the structure tensors of the Sasakian manifold. 
 Proposition \ref{Lefschetz proposition}  presents the Lefschetz decomposition for differential forms, Proposition~\ref{Kaehler Identity prop} the  K\"{a}hler  identities and  Proposition~\ref{del bar C commutator formulas}  general versions of identities proved earlier  for Sasaki-Einstein manifolds.

 Section~\ref{Sasaki section} is dedicated to the use of these identities in order to investigate the relation between the Dolbeault-Laplace operators $\Delta_\partial, \Delta_{\overline{\partial}}, \Delta_{\partial_C}, \Delta_{\overline{\partial}_C}$ defined by the differentials $ \partial    $, $\bar{ \partial    }$,  ${ \partial    }_C$, $\bar{ \partial    }_C$, and  the  Hodge Laplacian operator 
 $\Delta$ of the Sasakian manifold. In contrast to the situation of  a K\"ahler manifold, where $\Delta=2\Delta_\partial=2\Delta_{\overline{\partial}}$, the formulas are more complicated on Sasaki manifolds. In particular, we show the following:

\begin{theorem} \label{Laplacian theorem}
Let $(M^{2n+1},g,\xi )$ be a Sasakian manifold. The Dolbeault Laplacians $\Delta
_{\partial }$ and $\Delta _{\overline{\partial }}$ are related by 
\begin{equation}
\Delta _{\partial }=\Delta _{\overline{\partial }}-2iH\mathcal{L}_{\xi }.
\label{laplaceDels}
\end{equation}%
The Hodge Laplace operator acting on differential forms on $M$ satisfies 
\begin{equation}
\Delta =2\Delta _{\overline{\partial }}-2iH\mathcal{L}_{\xi }+2i\eta \wedge
\left( \overline{\partial }^{\ast }-\partial ^{\ast }\right) +2i\xi
\lrcorner \left( \overline{\partial }-\partial \right) +4L\Lambda +4\eta
\wedge \xi \lrcorner H-\mathcal{L}_{\xi }^{2}.  \label{eq:laplaceDel}
\end{equation}
\end{theorem}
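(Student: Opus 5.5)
We will derive both formulas from the decomposition $d=\partial+\overline{\partial}+\eta\wedge\mathcal{L}_{\xi}+d^{1,1,-1}$, where $d^{0,0,1}=\eta\wedge\mathcal{L}_\xi$ and $d^{1,1,-1}=(d\eta\wedge)(\xi\lrcorner)$ up to the normalization of Section~\ref{differentials section}. With $L=\tfrac12 d\eta\wedge$, this gives $d^{1,1,-1}=2L\,(\xi\lrcorner)$, and $H$ denotes the counting operator $(n-k)$ on horizontal $k$-forms appearing in the Lefschetz decomposition (Proposition~\ref{Lefschetz proposition}). The main tools will be the K\"ahler identities of Proposition~\ref{Kaehler Identity prop}, presumably $[\Lambda,\overline{\partial}]=-i\partial^{*}$ and $[\Lambda,\partial]=i\overline{\partial}^{*}$ together with their adjoints. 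We will also need the graded relations among $\partial,\overline{\partial}$ coming from $d^2=0$. Sorting $d^2=0$ by multidegree gives $\partial^2=0$ and $\overline{\partial}^2=0$. In degree $(1,1,0)$ it gives $\partial\overline{\partial}+\overline{\partial}\partial=-d^{1,1,-1}\eta\wedge\mathcal{L}_\xi-\eta\wedge\mathcal{L}_\xi d^{1,1,-1}=-2L\mathcal{L}_\xi$ on the relevant components, using $\xi\lrcorner\eta=1$ and $[\mathcal{L}_\xi,L]=0$. Further identities express the commutation of $\mathcal{L}_\xi$ with $\partial,\overline{\partial}$ and $\eta\wedge$.

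For \eqref{laplaceDels}, we will write $\partial^{*}=i[\Lambda,\overline{\partial}]$ and expand $\Delta_\partial=\partial\partial^*+\partial^*\partial=i(\partial\Lambda\overline{\partial}-\partial\overline{\partial}\Lambda+\Lambda\overline{\partial}\partial-\overline{\partial}\Lambda\partial)$. We then do the same for $\Delta_{\overline{\partial}}$ using $\overline{\partial}^*=-i[\Lambda,\partial]$. This is the K\"ahler proof with one change: instead of $\partial\overline{\partial}=-\overline{\partial}\partial$, we use $\partial\overline{\partial}+\overline{\partial}\partial=-2L\mathcal{L}_\xi$. Subtracting the two expansions then leaves $-2i\,[\Lambda,L]\mathcal{L}_\xi$ up to sign. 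Applying $[L,\Lambda]=H$ and the fact that $\mathcal{L}_\xi$ commutes with $\Lambda$ and $H$, this becomes $-2iH\mathcal{L}_\xi$.

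For \eqref{eq:laplaceDel}, we will expand $\Delta=(d+d^*)^2$ using the four components $A=\partial$, $B=\overline{\partial}$, $C=\eta\wedge\mathcal{L}_\xi$, $D=2L(\xi\lrcorner)$, with adjoints $C^*=-\mathcal{L}_\xi\,\xi\lrcorner$ (since $\mathcal{L}_\xi$ is skew-adjoint, $\xi$ being Killing) and $D^*=2\eta\wedge\Lambda$. Most cross terms $XY^*+Y^*X$ vanish for multidegree reasons, and the Laplacian splits into the following groups:
\begin{itemize}
\item[(i)] $\Delta_\partial+\Delta_{\overline{\partial}}+(\partial\overline{\partial}^*+\overline{\partial}^*\partial)+(\overline{\partial}\partial^*+\partial^*\overline{\partial})$;
\item[(ii)] $CC^*+C^*C=-\mathcal{L}_\xi^2$, using $\eta\wedge\xi\lrcorner+\xi\lrcorner\eta\wedge=1$;
\item[(iii)] $DD^*+D^*D=4\big(L\,\xi\lrcorner\,\eta\wedge\Lambda+\eta\wedge\Lambda L\,\xi\lrcorner\big)$, which by $[\Lambda,L]=-H$ reduces to $4L\Lambda+4\eta\wedge\xi\lrcorner H$;
\item[(iv)] the mixed terms of $\partial,\overline{\partial}$ with $C^*,D^*$ and of $C,D$ with $\partial^*,\overline{\partial}^*$.
\end{itemize}
The mixed terms $\partial\overline{\partial}^*+\overline{\partial}^*\partial$ in (i) are not zero here. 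Using the K\"ahler identities, we will rewrite them as $-i[\Lambda,\partial\partial+\ldots]$-type expressions, which reduce to terms involving $L\mathcal{L}_\xi$ and $\Lambda$. These should combine with (iv), where $\eta\wedge$ and $\xi\lrcorner$ interact with the K\"ahler identities, to produce $2i\eta\wedge(\overline{\partial}^*-\partial^*)+2i\xi\lrcorner(\overline{\partial}-\partial)$. Finally, we substitute $\Delta_\partial=\Delta_{\overline{\partial}}-2iH\mathcal{L}_\xi$ from the first part to reach $2\Delta_{\overline{\partial}}-2iH\mathcal{L}_\xi$.

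The main obstacle is the bookkeeping in (i) and (iv): getting signs right when commuting $\eta\wedge$ and $\xi\lrcorner$ past $\partial,\overline{\partial}$ and their adjoints, and identifying which leftover terms assemble into the $\xi\lrcorner(\overline{\partial}-\partial)$ and $\eta\wedge(\overline{\partial}^*-\partial^*)$ pieces. I would first verify all commutators, such as $[\xi\lrcorner,\partial]$, $[\eta\wedge,\partial^*]$ and $[\mathcal{L}_\xi,\partial]=0$, separately on $\Omega^{r,s,0}$ and $\Omega^{r,s,1}$. I would then check the final formula on basic functions and on $\eta$ as sanity tests to fix the constants.
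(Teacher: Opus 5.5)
Your architecture matches the paper's. You get the first identity from $\Delta_\partial-\Delta_{\overline\partial}=i[\Lambda,\{\partial,\overline\partial\}]=-2i[\Lambda,L]\mathcal{L}_\xi$, and the second by expanding $\{d,d^*\}$ in the four components; (ii) and (iii) are computed as in the paper. The genuine gap is in how you plan to get the first-order terms.

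You assert that the cross terms $\{\partial,\overline\partial^*\}$ and $\{\overline\partial,\partial^*\}$ in (i) are nonzero and should combine with (iv). In fact they vanish identically. By Proposition~\ref{Kaehler Identity prop}, $\{\partial,\overline\partial^*\}=-i\{\partial,[\Lambda,\partial]\}=-i[\Lambda,\partial^2]=0$, and conjugation gives the other. They also could not combine with anything. They have types $(1,-1,0)$ and $(-1,1,0)$, whereas every term of (iv) and of the target formula has type $(0,0,0)$, $(0,\mp1,\pm1)$ or $(\mp1,0,\pm1)$. A nonzero (i)-cross term would therefore make \eqref{eq:laplaceDel} false, so the step ``these should combine with (iv)'' cannot be carried out.

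The first-order terms come entirely from (iv). Use $\{\partial,\eta\wedge\}=\{\partial,\xi\lrcorner\}=0$ (these are anticommutators, not the commutators you list) and $[\mathcal{L}_\xi,\partial]=0$:
\[
\{2\eta\wedge\Lambda-\mathcal{L}_\xi\,\xi\lrcorner,\ \partial\}=2\eta\wedge[\Lambda,\partial]-\mathcal{L}_\xi\{\xi\lrcorner,\partial\}=2i\,\eta\wedge\overline\partial^*.
\]
The same computation with $\overline\partial$ gives $-2i\,\eta\wedge\partial^*$. Taking adjoints gives $\{C+D,\partial^*+\overline\partial^*\}=2i\,\xi\lrcorner(\overline\partial-\partial)$. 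Then (i) is just $\Delta_\partial+\Delta_{\overline\partial}$, and substituting the first identity finishes the proof.

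You also have the sign convention for $H$ reversed. If $H$ is $n-k$ on horizontal $k$-forms, then $H=[\Lambda,L]$, not $[L,\Lambda]$. The paper defines $H=[\Lambda,L]=(n-\deg)+\eta\wedge\xi\lrcorner$, which is $n-k+1$ on vertical $k$-forms. With your stated convention, part one yields $+2iH\mathcal{L}_\xi$. Likewise, (iii) equals $4L\Lambda+4\eta\wedge\xi\lrcorner[\Lambda,L]$, because $L$ and $\Lambda$ commute with $\eta\wedge$ and $\xi\lrcorner$ and $\{\eta\wedge,\xi\lrcorner\}=1$. Your convention would then give $-4\eta\wedge\xi\lrcorner H$. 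As written, both steps reach the stated signs only by matching the target, not by the computation you describe.
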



Here, $H, \Lambda$ and $L$ are operators acting on differential forms defined in Section \ref{differentials section} and $\mathcal{L}_{\xi }$ is the Lie derivative in the direction of $\xi$. In the following section we develop the de Rham-Hodge and spectral theory for the Laplace operator associated with $ \partial    , \bar{ \partial    }$ and $ \partial    _{C}, \bar{ \partial    }_{C}$. We also use the fact that the flow of the characteristic vector field $\xi$ defines an abelian subgroup of the group of isometries of the Sasakian manifold. Its closure is a torus contained in $\mathrm{Isom}(M,g)$. Theorem~\ref{rst Hodge Theorem} summarizes
the consideration of Section \ref{Spectral theory section}.

\begin{theorem}\label{rst Hodge Theorem} Let $(M^{2n+1},g,\xi )$ be a compact Sasakian
manifold. For $0\leq r,s\leq n$ and $t\in \{0,1\}$, let $H_{\overline{%
\partial }}^{r,s,t}\left( M\right) $, respectively $H_{\partial
}^{r,s,t}\left( M\right) $, be the cohomology groups defined by%
\begin{equation*}
H_{\overline{\partial }}^{r,s,t}\left(M\right) =\frac{\ker(\overline{%
\partial }:\Omega ^{r,s,t}\rightarrow \Omega ^{r,s+1,t})}{\limfunc{im}%
(\overline{\partial }:\Omega ^{r,s-1,t}\rightarrow \Omega ^{r,s,t})},~\text{%
respectively }H_{\partial }^{r,s,t}\left( M\right) =\frac{\ker (\partial
:\Omega ^{r,s,t}\rightarrow \Omega ^{r+1,s,t})}{\limfunc{im}(\partial :\Omega
^{r-1,s,t}\rightarrow \Omega ^{r,s,t})}.
\end{equation*}%
Then the natural map $\Omega^{r,s,t}\to  H_{\bullet
}^{r,s,t}\left( M\right); \omega \mapsto \left[ \omega \right] $ induces isomorphisms 
\begin{eqnarray*}
\mathcal{H}_{\overline{\partial }}^{r,s,t} &=&\ker (\Delta _{\overline{%
\partial }}^{r,s,t})=\ker (\left. \overline{\partial }\right\vert _{\Omega
^{r,s,t}})\cap \ker (\left. \overline{\partial }^{\ast }\right\vert _{\Omega
^{r,s,t}})\cong H_{\overline{\partial }}^{r,s,t}\left( M\right) , \\
\mathcal{H}_{\partial }^{r,s,t} &=&\ker (\Delta _{\partial }^{r,s,t})=\ker
(\left. \partial \right\vert _{\Omega ^{r,s,t}})\cap \ker (\left. \partial
^{\ast }\right\vert _{\Omega ^{r,s,t}})\cong H_{\partial }^{r,s,t}\left(
M\right) ,
\end{eqnarray*}%
which may be infinite dimensional. There is a countable set of charges $q\in 
\mathbb{R}$ such that the $iq$-eigenspace $\Omega ^{r,s,t}(q)$ of $\mathcal{L%
}_{\xi }$ on $\Omega ^{r,s,t}$ is nonzero, and the spaces above are Hilbert
sums of the spaces%
\begin{eqnarray*}
\mathcal{H}_{\overline{\partial }}^{r,s,t}(q) &=&\ker (\left. \Delta _{%
\overline{\partial }}\right\vert _{\Omega ^{r,s,t}(q)})\cong H_{\overline{%
\partial }}^{r,s,t}\left( q\right) , \\
\mathcal{H}_{\partial }^{r,s,t}(q) &=&\ker (\left. \Delta _{\partial
}\right\vert _{\Omega ^{r,s,t}(q)})\cong H_{\overline{\partial }%
}^{r,s,t}\left( q\right) ,
\end{eqnarray*}%
all of which are finite dimensional.
\end{theorem}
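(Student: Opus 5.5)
The plan is to reduce everything to a family of elliptic problems, one for each eigenvalue of $\mathcal{L}_{\xi}$. The organizing fact is that $\xi$ is Killing and preserves $\eta$, $d\eta$ and $\phi$. Hence $\mathcal{L}_{\xi}$ commutes with $\partial$, $\overline{\partial}$, their adjoints, and the projections onto each $\Omega^{r,s,t}$.

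\textbf{Step 1: the charge decomposition.} The closure of the Reeb flow is a torus $T\subset \mathrm{Isom}(M,g)$. It acts unitarily on $L^2\Omega^{r,s,t}$ and commutes with the operators above. Decomposing into $T$-characters and then grouping by the value of the generator $\xi$ gives the Hilbert sum $L^2\Omega^{r,s,t}=\widehat{\bigoplus}_q \Omega^{r,s,t}(q)$. Here $\mathcal{L}_{\xi}=iq$ on $\Omega^{r,s,t}(q)$. Only countably many $q$ occur, since the characters of a torus form a countable lattice. Each $\Omega^{r,s,t}(q)$ is preserved by $\overline{\partial}$, $\overline{\partial}^{\ast}$, $\Delta_{\overline{\partial}}$, and likewise for $\partial$.

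\textbf{Step 2: ellipticity on each charge space.} Next I would show that $P_q:=\Delta_{\overline{\partial}}-\mathcal{L}_{\xi}^2$ is elliptic. On the transverse directions, $\Delta_{\overline{\partial}}$ has the symbol of $\tfrac12$ of the transverse Laplacian, exactly as in the Kähler identity. Adding $-\mathcal{L}_{\xi}^2$ supplies the missing $\xi$-direction symbol $\eta(\zeta)^2$. To justify this I would use the formula $\Delta=2\Delta_{\overline{\partial}}-2iH\mathcal{L}_{\xi}+\cdots-\mathcal{L}_{\xi}^2$ of Theorem \ref{Laplacian theorem}. All the terms on the right other than $2\Delta_{\overline{\partial}}$ and $-\mathcal{L}_{\xi}^2$ are first order or zeroth order. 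Alternatively one can compute the principal symbol directly.

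On $\Omega^{r,s,t}(q)$ we have $P_q=\Delta_{\overline{\partial}}+q^2$. Elliptic regularity and compactness of $M$ then show that $\ker(\Delta_{\overline{\partial}}|_{\Omega^{r,s,t}(q)})$ is a finite-dimensional space of smooth forms. It is the $q^2$-eigenspace of the self-adjoint elliptic operator $P$ restricted to $\Omega(q)$, i.e. $\ker(P-q^2)\cap\Omega(q)$. The same elliptic argument gives a Hodge decomposition on each charge space,
$$\Omega^{r,s,t}(q)=\mathcal{H}^{r,s,t}_{\overline{\partial}}(q)\oplus \overline{\partial}\,\Omega^{r,s-1,t}(q)\oplus \overline{\partial}^{\ast}\Omega^{r,s+1,t}(q).$$
To get it, I would use a Green operator for $\Delta_{\overline{\partial}}$ on $\Omega(q)$, obtained from the spectral decomposition of $P$ restricted to the $T$-invariant subspace.

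\textbf{Step 3: cohomology and harmonic forms.} We need $\overline{\partial}^2=0$, which holds because $d^2=0$ gives $(d^{0,1,0})^2=0$ by degree count. Given that, the usual argument identifies $\ker\overline{\partial}\cap\ker\overline{\partial}^{\ast}=\ker\Delta_{\overline{\partial}}$ (integrate $\langle\Delta_{\overline{\partial}}\omega,\omega\rangle$). It then yields $\mathcal{H}^{r,s,t}_{\overline{\partial}}(q)\cong H^{r,s,t}_{\overline{\partial}}(q)$. Summing over $q$ gives the global statement, with $\ker\Delta_{\overline{\partial}}$ the Hilbert sum of the finite pieces.

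The delicate point is to show that the global image of $\overline{\partial}$ equals the sum of the charge images in smooth forms. For this I would construct the global Green operator as $G=\sum_q G_q$. I would then show it maps smooth forms to smooth forms, using the bound $\|G_q\|\le C/\lambda_1(q)$. This requires controlling the first nonzero eigenvalue of $\Delta_{\overline{\partial}}$ on $\Omega(q)$ uniformly from below. I expect this to be the main obstacle.

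My first attempt would use that eigenvalues of $P$ on $\Omega(q)$ minus $q^2$ are eigenvalues of $\Delta_{\overline{\partial}}$. I would then apply Sobolev estimates for $P$ together with the weight $q^2$. If a uniform lower bound fails, I would fall back on proving the isomorphism with $H_{\overline{\partial}}$ interpreted chargewise, and the global statement via $L^2$ closures.

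\textbf{Step 4: the $\partial$ case.} The $\partial$ case follows identically, or more quickly from Theorem \ref{Laplacian theorem}. On $\Omega^{r,s,t}(q)$ we have $\Delta_{\partial}=\Delta_{\overline{\partial}}+2qH$, where $H$ acts as a constant on each $(r,s,t)$. So $\Delta_\partial$ differs from $\Delta_{\overline{\partial}}$ only by a constant on each charge-and-bidegree piece, and the same ellipticity and finiteness hold.
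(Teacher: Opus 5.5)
Up to the global step, this is the paper's proof. Both arguments use the torus closure of the Reeb flow, the charge decomposition, ellipticity of $\Delta_{\overline\partial}$ corrected by a multiple of $\mathcal{L}_\xi^2$ (read off from Theorem~\ref{Laplacian theorem}), and a chargewise Hodge decomposition with finite-dimensional kernels. Two small remarks:
\begin{itemize}
\item The paper subtracts $\tfrac12\mathcal{L}_\xi^2$, so the symbol is exactly $\tfrac12|\zeta|^2$. Your coefficient $1$ gives $\tfrac12|\zeta|^2+\tfrac12\eta(\zeta)^2$, which is equally elliptic.
\item Your Step~4, $\Delta_\partial=\Delta_{\overline\partial}+2q(n-r-s)$ on $\Omega^{r,s,t}(q)$, handles $\partial$ more cleanly than the paper does.
\end{itemize}
The step you flag is a genuine gap, and the paper's proof does not close it either. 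It obtains $H^{r,s,t}_{\overline\partial}(M)\cong\ker\Delta_{\overline\partial}$ only by ``taking the infinite Hilbert direct sum'' of the chargewise isomorphisms. That yields an $L^2$ statement: the $L^2$-closure of $\ker\Delta_{\overline\partial}$ is $\widehat{\bigoplus}_q\mathcal{H}^{r,s,t}_{\overline\partial}(q)$, which is reduced $L^2$-cohomology. This is essentially your fallback. For smooth forms, injectivity of $\ker\Delta_{\overline\partial}\to H^{r,s,t}_{\overline\partial}(M)$ is trivial. The case $s=0$ is also trivial, since both sides equal $\ker\overline\partial$. Surjectivity for $s\ge 1$ is where something is missing.

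What is needed is what you named. You need a lower bound $c>0$ for the nonzero spectrum of $\Delta_{\overline\partial}$ on $\overline\partial\,\Omega^{r,s-1,t}(q)$, uniform in $q$ (a bound degenerating at most polynomially as $|q|\to\infty$ would also do). Equivalently, $\overline\partial:\Omega^{r,s-1,t}\to\Omega^{r,s,t}$ must have $L^2$-closed range. Given such a $c$, your plan finishes. For smooth $\overline\partial$-closed $\omega\perp\ker\Delta_{\overline\partial}$, set $\beta_q=\overline\partial^*G_q\omega_q$. Then $\|\beta_q\|\le c^{-1/2}\|\omega_q\|$ and $\Delta_{\overline\partial}\beta_q=\overline\partial^*\omega_q$. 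Hence every $\|(P+1)^k\sum_q\beta_q\|$ is bounded by Sobolev norms of $\omega$, and elliptic regularity of $P$ makes $\beta$ smooth.

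Your first attempt, however, cannot produce $c$:
\begin{itemize}
\item Ellipticity of $P$ does control any bounded range $|q|\le Q$. The eigenspaces of $P$ below $Q^2+1$ span a finite-dimensional $T$-invariant space, which meets only finitely many $\Omega(q)$. On all the other $\Omega(q)$ with $|q|\le Q$ one has $\Delta_{\overline\partial}=P-q^2\ge 1$.
\item As $|q|\to\infty$, the spectrum of $P$ near $q^2$ becomes arbitrarily dense. Nothing in the chargewise argument keeps the eigenvalues of $P|_{\Omega(q)}$ above $q^2$ from approaching $q^2$.
\item The paper's Theorem~\ref{thm:eigenestimatedeltabar} covers only $q\to-\infty$ in degrees below $n$. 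Via Lemma~\ref{hodgestar} it also covers $q\to+\infty$ in degrees above $n+1$, but no more.
\end{itemize}

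The missing input is global. By Section~\ref{Kohn Rossi section}, $\Delta_{\overline\partial}$ is the Kohn--Rossi Laplacian on horizontal forms, and $\Delta_{\overline\partial}(\eta\wedge\beta)=\eta\wedge\Delta_{\overline\partial}\beta$. So what you need is closed range of $\overline\partial_b$.
\begin{itemize}
\item For $n\ge 2$, this follows from the Kohn--Rossi subelliptic estimates in degrees $1,\dots,n-1$, which also make those cohomologies finite dimensional. The discussion before the paper's proof recalls this, but the proof does not use it.
\item For $n=s=1$, the problem reduces via $\overline\ast$ to closed range of $\overline\partial_b$ on functions. That is equivalent to embeddability of the CR structure. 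It holds for compact Sasakian $3$-manifolds, but it is a separate global theorem that must be invoked.
\end{itemize}
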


 Section~\ref{Kohn Rossi section} explains the relation between the cohomology defined by the $\partial$-operator investigated in this paper and the Kohn-Rossi cohomology as well as with the basic Dolbeault cohomology.  In Section~\ref{Lefschetz conseq section}, we use the Lefschetz operator $L$
 and the  decomposition theorem for differential forms  to represent $\bar{ \partial    }$-harmonic forms in terms of primitive forms, meaning differential forms belonging to the kernel of the operator $\Lambda$. Unlike the K\"ahler manifold situation where $L$ commutes with $\Delta_{\overline\partial}$,  the operator $L$ shifts the $\Delta_{\overline\partial}$-eigenvalues on a Sasakian manifold.
\begin{theorem}\label{Lefschetz theorem}
Let $(M^{2n+1},g,\xi )$ be a compact Sasakian
manifold. Let $m\in\{1,\ldots, 2n+1\}$. For any $q\in\mathbb{R}$, we have the decomposition
\begin{equation*}
\mathcal{H}_{\overline{\partial }}^{m}\left( q\right) =\mathcal{H}_{%
\overline{\partial },P}^{m}\left( q\right) \oplus
\bigoplus\nolimits_{1\leq b\leq \left\lfloor \frac{m}{2} \right\rfloor
}L^{r}E_{-2bq}\left( \Delta _{\overline{\partial },P}\left(
q\right) \right) ,
\end{equation*}%
where $E_{\lambda }\left( \Delta _{\overline{\partial },P}(q)\right)$ denotes the eigenspace of $\Delta_{\overline\partial}$ restricted to primitive forms corresponding to eigenvalue $%
\lambda $. In particular, for $q>0$, $\mathcal{H}_{\overline{\partial }%
}^{m}\left( q\right) =\mathcal{H}_{\overline{\partial },P}^{m}\left(
q\right) $. 
\end{theorem}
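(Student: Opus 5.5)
We use the Lefschetz decomposition (Proposition~\ref{Lefschetz proposition}): every $m$-form can be written uniquely as $\omega=\sum_{b\ge 0}L^b\omega_b$ with $\omega_b$ primitive of degree $m-2b$. We then compute how $\Delta_{\overline\partial}$ interacts with $L$. Since $\mathcal{L}_\xi$ commutes with $L$, $\Lambda$, $\overline\partial$ and $\overline\partial^*$ ($\xi$ is Killing and preserves the Sasakian structure), everything restricts to the charge-$q$ eigenspaces $\Omega^m(q)$.

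The central identity we aim for is a commutator formula
\begin{equation*}
[\Delta_{\overline\partial},L]=c\,i\,L\,\mathcal{L}_\xi
\end{equation*}
for a constant $c$ (we expect $c=-2$ with the paper's conventions, possibly with a sign adjustment). On $\Omega(q)$ this becomes $[\Delta_{\overline\partial},L]=2qL$.

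To derive it:
\begin{enumerate}
\item Write $[\Delta_{\overline\partial},L]=\overline\partial[\overline\partial^*,L]+[\overline\partial^*,L]\overline\partial+[\overline\partial,L]\overline\partial^*+\overline\partial^*[\overline\partial,L]$.
\item Note that $[L,\overline\partial]=0$, because $d(d\eta)=0$ and $d\eta$ is of type $(1,1,0)$.
\item Use the Kähler identity from Proposition~\ref{Kaehler Identity prop} for $[L,\overline\partial^*]$. This should equal $\pm i\partial$ up to terms involving $\eta\wedge$ and $\xi\lrcorner$.
\item We are left with $\pm i(\overline\partial\partial+\partial\overline\partial)$. On a Sasakian manifold this anticommutator is not zero: comparing components of $d^2=0$ in the $(1,1,0)$ tri-degree gives $\partial\overline\partial+\overline\partial\partial=-\{d^{1,1,-1},d^{0,0,1}\}$, and $\{d^{1,1,-1},d^{0,0,1}\}$ is essentially $d\eta\wedge\mathcal{L}_\xi=L\mathcal{L}_\xi$ up to a constant.
\end{enumerate}
This yields the formula. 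Theorem~\ref{Laplacian theorem}, i.e. $\Delta_\partial-\Delta_{\overline\partial}=-2iH\mathcal{L}_\xi$, serves as a consistency check on signs and constants.

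Iterating gives $\Delta_{\overline\partial}L^b=L^b(\Delta_{\overline\partial}+2bq)$ on $\Omega(q)$. We also need $\Delta_{\overline\partial}$ to preserve primitivity. For this we use $[\Delta_{\overline\partial},\Lambda]=-2q\Lambda$, obtained by taking adjoints, noting that $\mathcal{L}_\xi$ is skew-adjoint with $(i\mathcal{L}_\xi)^*=i\mathcal{L}_\xi$. If $\Lambda\omega=0$, then $\Lambda\Delta_{\overline\partial}\omega=(\Delta_{\overline\partial}+2q)\Lambda\omega=0$. Hence $\Delta_{\overline\partial}(\sum L^b\omega_b)=\sum L^b(\Delta_{\overline\partial}+2bq)\omega_b$ with each $(\Delta_{\overline\partial}+2bq)\omega_b$ primitive. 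By uniqueness of the Lefschetz decomposition, $\omega$ is $\Delta_{\overline\partial}$-harmonic exactly when $\Delta_{\overline\partial}\omega_b=-2bq\,\omega_b$ for every $b$. This gives the stated decomposition, with the $b=0$ term being the primitive harmonic forms and $L^bE_{-2bq}(\Delta_{\overline\partial,P}(q))$ for $b\ge1$; the range of $b$ is limited by the degree. We read $L^r$ in the statement as $L^b$.

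For $q>0$, $\Delta_{\overline\partial}$ is nonnegative (being $\overline\partial\overline\partial^*+\overline\partial^*\overline\partial$), so $-2bq<0$ cannot be an eigenvalue. The summands with $b\ge1$ therefore vanish, leaving $\mathcal{H}^m_{\overline\partial}(q)=\mathcal{H}^m_{\overline\partial,P}(q)$.

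The main obstacle is step 4: getting the exact constant and sign in $\partial\overline\partial+\overline\partial\partial$ in terms of $L\mathcal{L}_\xi$, and checking that the extra $\eta\wedge$ and $\xi\lrcorner$ terms in the Sasakian Kähler identities cancel. I would first verify this on $\Omega^{r,s,0}$ and $\Omega^{r,s,1}$ separately, using $d\eta\wedge$ and the fact that $d^{0,0,1}=\eta\wedge\mathcal{L}_\xi$ on these components. If the sign comes out opposite, the eigenvalue in the decomposition becomes $+2bq$, and the vanishing statement moves to $q<0$. In that case we would have to recheck against the paper's convention for $\phi$.
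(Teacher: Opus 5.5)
Your proposal is correct and is essentially the paper's proof. The paper derives $\Delta_{\overline\partial}L^b=L^b(\Delta_{\overline\partial}+2bq)$ and $\Lambda^b\Delta_{\overline\partial}=(\Delta_{\overline\partial}+2bq)\Lambda^b$ on $\Omega(q)$ by your commutator computation and reads off the decomposition from uniqueness of the Lefschetz decomposition; the sign issues you flag do not arise, since for $\partial,\overline\partial$ the Sasakian K\"ahler identity is exactly $[\overline\partial^{\ast},L]=i\partial$ (correction terms appear only for $\partial_C,\overline{\partial_C}$) and $\overline\partial\partial+\partial\overline\partial=-2L\mathcal{L}_\xi$, so $c=-2$ as you predicted.
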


The main results of Section~\ref{eigenvalue estimates section} are to provide upper bounds for the first eigenvalue of the Hodge Laplace operator based on a careful use of Equation \eqref{eq:laplaceDel} and the min-max principle for a particular test-form. We first state a lower bound for the eigenvalues and characterize its limiting case.

\begin{theorem}\label{thm:eigenestimatedeltabar}
Let $(M^{2n+1},g,\xi )$ be a compact Sasakian
manifold. Let $m\in\{0,\ldots, n-1\}$. If  $\omega$ is a nonzero eigenform of $\Delta_{\overline\partial}$ restricted to $m$-forms, that is $\Delta_{\overline\partial}\omega=\lambda\omega$ with $\mathcal{L}_\xi\omega=iq\omega$, for some $q\leq 0$, then the following estimate
$$\lambda\geq -q(n-m).$$
holds. Equality is attained if and only if $\lambda=q=0$, that is, $\omega\in \mathcal{H}^m_{\overline\partial}(0)$.
\end{theorem}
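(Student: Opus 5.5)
The plan is to compare $\Delta_{\overline\partial}$ with $\Delta_\partial$ using \eqref{laplaceDels} of Theorem~\ref{Laplacian theorem}, and to use that $\Delta_\partial$ is nonnegative. Since $\mathcal L_\xi$ commutes with $\Delta_{\overline\partial}$ and with the tridegree projections, I may assume $\mathcal L_\xi\omega=iq\omega$. I then write $\omega=\omega_0+\eta\wedge\omega_1$ with $\omega_0\in\bigoplus_{r+s=m}\Omega^{r,s,0}$ and $\omega_1\in\bigoplus_{r+s=m-1}\Omega^{r,s,0}$. Because $\overline\partial=d^{0,1,0}$ preserves $t$, $\Delta_{\overline\partial}$ preserves this splitting. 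Hence each nonzero component is itself an eigenform with eigenvalue $\lambda$ and charge $q$, and it suffices to treat pure $t$.

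Applying \eqref{laplaceDels} to such a component gives
\[
\Delta_\partial\omega=\lambda\omega-2iH(iq\omega)=\lambda\omega+2qH\omega .
\]
Taking the $L^2$ inner product with $\omega$ and using $\langle\Delta_\partial\omega,\omega\rangle=\|\partial\omega\|^2+\|\partial^*\omega\|^2\ge0$ yields
\[
\lambda\|\omega\|^2\ \ge\ -2q\langle H\omega,\omega\rangle .
\]
Next I use the description of $H$ from Section~\ref{differentials section}. It acts by a scalar on each $\Omega^{r,s,t}$, which I expect with the paper's normalization to be $\tfrac12(n-r-s)$. On $\omega_0$ this is $\tfrac12(n-m)$, and on $\eta\wedge\omega_1$ it is $\tfrac12(n-m+1)$. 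Since $q\le0$, in both cases $-2q\langle H\omega,\omega\rangle\ge -q(n-m)\|\omega\|^2$, which gives $\lambda\ge -q(n-m)$. If the normalization of $H$ turns out different, the same computation with the correct scalar gives the bound. The restriction $m\le n-1$ ensures $n-m>0$, so the bound is nontrivial.

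For the equality case, if $q=0$ the bound says $\lambda\ge0$, and equality means $\lambda=0$, i.e. $\omega\in\mathcal H^m_{\overline\partial}(0)$. So assume $q<0$ and $\lambda=-q(n-m)$. Equality in the chain forces two things. First, the $t=1$ component vanishes, since its scalar $\tfrac12(n-m+1)$ is strictly larger, so $\omega$ is horizontal. Second, $\partial\omega=\partial^*\omega=0$. The task is then to show that a horizontal, $\partial$-harmonic $m$-form with $m<n$ and charge $q<0$ cannot be a $\Delta_{\overline\partial}$-eigenform with eigenvalue $-q(n-m)$.

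This is the main obstacle. My first attempt will use the Kähler identities of Proposition~\ref{Kaehler Identity prop} (of the form $[\Lambda,\partial]=i\overline\partial^*$ up to $\eta$/$\xi$ terms) together with the Lefschetz decomposition of Proposition~\ref{Lefschetz proposition}. Write $\omega=\sum_b L^b\beta_b$ with $\beta_b$ primitive. Then use Theorem~\ref{Lefschetz theorem} and the eigenvalue shifts of $L$ on $\Delta_{\overline\partial}$ to show that the primitive pieces would have to be simultaneously $\partial$-closed and $\partial$-coclosed with an incompatible $\Delta_{\overline\partial}$-eigenvalue. Concretely, I would apply $\Lambda$ to $\partial\omega=0$ and $\partial^*\omega=0$ to express $\overline\partial^*\omega$ through the $\eta$-terms. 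I would then compute $\|\overline\partial\omega\|^2+\|\overline\partial^*\omega\|^2$ directly and show it is strictly smaller than $-q(n-m)\|\omega\|^2$ unless $\omega=0$.

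If this fails, the fallback is to complex conjugate. Conjugation exchanges $\partial$ and $\overline\partial$ and sends $q$ to $-q$. I would combine the resulting identity with formula \eqref{eq:laplaceDel} and $\Delta\ge0$ to extract a strict inequality when $q<0$.
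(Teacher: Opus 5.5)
Your route works and is more direct than the paper's, but you mis-normalized $H$, and that error created your ``main obstacle.'' By \eqref{eq:comm lambda L}, $H=[\Lambda,L]=(n-\deg)+\eta\wedge(\xi\lrcorner)$, so $H$ acts on $\Omega^{r,s,t}$ by $n-r-s$, not $\tfrac12(n-r-s)$ (on functions $H=n$). With the correct scalar your computation reads
\[
0\le\langle\langle\Delta_\partial\omega,\omega\rangle\rangle=\bigl(\lambda+2q(n-m)\bigr)\|\omega\|^2+2q\|\xi\lrcorner\omega\|^2 .
\]
So for $q\le 0$ you get $\lambda\ge -2q(n-m)$, twice the stated bound. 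Since $n-m\ge 1$, this strictly exceeds $-q(n-m)$ whenever $q<0$. Hence $\lambda=-q(n-m)$ forces $q=0$ and then $\lambda=0$; the converse is trivial.

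The equality case therefore costs nothing, and you should drop the Lefschetz/K\"ahler-identity plan and the conjugation fallback. As written they are only intentions, so under your $\tfrac12$ normalization the equality case would have been left unproved. The reduction to pure $t$-components is harmless but unnecessary.

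The paper argues differently. It applies \eqref{laplaceDels} not to $\omega$ but to $\alpha_b=L^b\omega$ with $1\le b\le n-m$. Nonnegativity of $\Delta_{\overline\partial}$ and $\Delta_\partial$ on $\alpha_b$ gives $\lambda\ge -2bq$ and $\lambda\ge -2q(n-m-b)$ respectively, and the paper adds these. Averaging loses exactly the factor $2$. The equality case then needs more work: showing $\omega$ is horizontal with $n-m=2b$, invoking Proposition~\ref{anti Lefschetz proposition} to get $L\alpha_b=0$, and contradicting injectivity of $L^{b+1}$.

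Your argument is in effect the case $b=0$ of the paper's second inequality. It needs only \eqref{laplaceDels} and $\Delta_\partial\ge 0$, uses no Lefschetz theory, and gives a sharp estimate. On $\mathbb{S}^3$ ($n=1$, $m=0$), the function $\phi_{\ell,0}$ of Section~\ref{exampleroundsphere} has $q=-\ell$ and $\Delta_{\overline\partial}\phi_{\ell,0}=2\ell\,\phi_{\ell,0}=-2q\,\phi_{\ell,0}$, whereas the stated bound is only $\ell$.
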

For upper bounds of the first positive eigenvalue $\lambda_{1,k}$ of the Hodge Laplacian restricted to $k$-forms, we get
\begin{theorem} \label{eigenvalue estimate 1}
Let $(M^{2n+1},g,\xi )$ be a compact Sasakian manifold. Let $m\in \{0,\ldots, 2n+1\}$. Then 
\begin{itemize} 
\item If $m>n+1$ we have $\mathcal{H}_{\overline\partial,P}^{m}(0)=0$,   and $\mathcal{H}_{\overline\partial,P}^{n+1}(0)\subset H^{n+1}(M)$. 
\item If $\mathcal{H}_{\overline\partial,P}^{m}(0)$ is nonzero
for some $m<n$, the first positive eigenvalue $\lambda_{1,m+1}$  satisfies
$$\lambda_{1,m+1}<4(n-m+1).$$
Also, we have 
$$\lambda_{1,m+2b}< 4(b+1)(n-m-b+1),$$
for any $0<b\leq\frac{n-m}{2}$. 
\end{itemize}
\end{theorem}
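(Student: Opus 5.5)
We have to prove Theorem \ref{eigenvalue estimate 1}. The first bullet is degree-theoretic; the second comes from the min-max principle with test forms built from primitive $\overline\partial$-harmonic forms of charge $0$.

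\textbf{First bullet.} A form in $\mathcal{H}_{\overline\partial,P}^{m}(0)$ has components in $\Omega^{r,s,0}\oplus\Omega^{r,s,1}$. Primitivity kills the horizontal part in degree $>n$, and also the part $\eta\wedge\beta$ with $\beta$ horizontal of degree $m-1>n$. So for $m>n+1$ the space vanishes.

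For $m=n+1$ only $\eta\wedge\beta$ with $\beta$ primitive horizontal of degree $n$ can survive. Here I would use $\mathcal{L}_\xi\omega=0$, $\overline\partial\omega=\overline\partial^*\omega=0$, $\Lambda\omega=0$ and the identity \eqref{laplaceDels}. I expect these to give $\partial\omega=\partial^*\omega=0$ (the $H\mathcal{L}_\xi$ term vanishes at $q=0$). Then formula \eqref{eq:laplaceDel} should reduce $\Delta\omega$ to $4L\Lambda\omega+4\eta\wedge\xi\lrcorner H\omega$ plus terms that vanish. On $\eta\wedge\beta$ with $\beta$ of bidegree $(r,s)$, $r+s=n$, the operator $H$ acts by a constant depending on $(r,s)$, presumably $r+s-n$ or similar, which is $0$ here. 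Hence $\omega$ is $\Delta$-harmonic and lies in $H^{n+1}(M)$ by Hodge theory.

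\textbf{Second bullet.} Take $0\ne\omega\in\mathcal{H}_{\overline\partial,P}^{m}(0)$ with $m<n$. Test forms of degree $m+1$ and $m+2b$ are built from $\omega$ using $\eta$ and $L$. The natural candidates are
\[
\eta\wedge\omega\ (\text{degree } m+1),\qquad \eta\wedge L^{b}\omega-c\,L^{b}\omega\ \text{ or simply } \eta\wedge L^{b-1}\omega\cdots .
\]
The target bound $4(b+1)(n-m-b+1)$ suggests that the Lefschetz $\mathfrak{sl}_2$ constant $\Lambda L^{b}\omega=b(n-m-b+1)L^{b-1}\omega$ enters through the term $4L\Lambda$.

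So I would use $\alpha=\eta\wedge L^{b}\omega$ in degree $m+2b+1$, or $L^{b}\omega$ in degree $m+2b$, choosing whichever degree matches after checking. Since $L^{b}\omega$ has degree $m+2b$, I take $\alpha=L^{b}\omega$. For $b=0$ and degree $m+1$ I take $\alpha=\eta\wedge\omega$.

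Next I compute $\langle\Delta\alpha,\alpha\rangle$ with \eqref{eq:laplaceDel}. This requires:
\begin{itemize}
\item the commutators of $\overline\partial,\overline\partial^*,\partial^*$ with $L$ and $\eta\wedge$ from Proposition \ref{Kaehler Identity prop};
\item the eigenvalue shift of Theorem \ref{Lefschetz theorem}, which at $q=0$ gives $\Delta_{\overline\partial}L^{b}\omega=0$;
\item the primitive formula for $L\Lambda L^{b}\omega$;
\item the $H$-eigenvalue on $\eta\wedge$ terms.
\end{itemize}
The Rayleigh quotient should then come out at most $4(b+1)(n-m-b+1)$, giving $4(n-m+1)$ for $b=0$ in degree $m+1$.

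To turn this into a bound on the first \emph{positive} eigenvalue, project $\alpha$ orthogonally to the harmonic forms. If $\alpha$ is not harmonic, min-max gives $\lambda_{1}\le$ quotient. Strict inequality follows because the harmonic projection either is nonzero, which strictly lowers the quotient, or, if equality held, $\alpha$ would be an eigenform. One then rules this out, e.g.\ by showing $d\alpha\ne0$ or $d^*\alpha\ne0$ fails the eigen-equation, using \eqref{eq:laplaceDel}.

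\textbf{Main obstacle.} The hardest step is the exact evaluation of the cross terms $2i\eta\wedge(\overline\partial^*-\partial^*)$ and $2i\xi\lrcorner(\overline\partial-\partial)$ on $L^{b}\omega$ and $\eta\wedge\omega$. This needs the commutators $[\partial^*,L]$ and $[\overline\partial^*,L]$, which on Sasakian manifolds produce $\eta$- and $\xi\lrcorner$-terms, together with a verification of $\partial\omega=0$. I would first try to show these cross terms contribute nonpositively, or vanish by bidegree orthogonality, leaving only the $4L\Lambda$ and $4\eta\wedge\xi\lrcorner H$ contributions.
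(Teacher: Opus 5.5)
Your first bullet is correct. For $m>n+1$ it is more elementary than the paper's argument, which uses $\langle\langle\Delta\omega,\omega\rangle\rangle=4(n-m+1)\|\xi\lrcorner\omega\|^2$ together with Tachibana's theorem. Since $\Lambda$ commutes with $\eta\wedge$ and $\xi\lrcorner$, a primitive $\omega=\omega_0+\eta\wedge\beta$ has $\omega_0$ and $\beta$ primitive and horizontal, and these vanish in degrees $>n$. For $m=n+1$ you follow the paper, and $H(\eta\wedge\beta)=0$ holds exactly.

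The ``main obstacle'' you flag in the second bullet disappears by the same trick. For any charge-$0$ form $\alpha\in\ker\Delta_{\overline\partial}$, \eqref{laplaceDels} gives $\Delta_\partial\alpha=0$, hence $\partial\alpha=\partial^*\alpha=0$. Then \eqref{eq:laplaceDel} reduces to $\Delta\alpha=4L\Lambda\alpha+4\eta\wedge\xi\lrcorner H\alpha$, and this covers both $\eta\wedge\omega$ and $L^b\omega$.

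\textbf{Gap in the degree-$(m+1)$ estimate.} Write $\omega=\omega_0+\eta\wedge\phi$ with $\omega_0,\phi$ horizontal. Your test form $\eta\wedge\omega=\eta\wedge\omega_0$ is zero when $\omega$ is vertical, so your plan gives nothing in that case. When $\omega_0\neq0$ its quotient is $4(n-m)$, not $4(n-m+1)$. The value $4(n-m+1)$ comes only from the vertical case. The paper handles it by passing to $\phi=\xi\lrcorner\omega\in\mathcal{H}^{m-1}_{\overline\partial,P}(0)$. The co-closed form $\eta\wedge\phi$, of degree $m$ rather than $m+1$, satisfies $\Delta(\eta\wedge\phi)=4(n-m+1)\,\eta\wedge\phi$. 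This gives $\lambda''_{1,m}\le 4(n-m+1)$, which transfers to degree $m+1$ via $\lambda''_{1,m}=\lambda'_{1,m+1}$, i.e.\ via $d(\eta\wedge\phi)=2L\phi$.

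Your degree-$(m+2b)$ test form $L^b\omega$ does work, and it avoids the paper's case split (the paper uses $L^{b+1}\phi$ in degree $m+2b+1$ for the vertical part). Indeed $L^b\omega\neq0$, and its quotient is a convex combination of $4b(n-m-b+1)$ and $4(b+1)(n-m-b+1)$. These are the eigenvalues of $L^b\omega_0$ and $\eta\wedge L^b\phi$.

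\textbf{Harmonic forms and strictness.} Removing the harmonic component of $\alpha$ raises the Rayleigh quotient; it does not lower it. So you need $\alpha\perp\ker\Delta$ outright. The paper gets this from Tachibana (harmonic forms of degree $\le n$ are horizontal and primitive). It also follows because your test forms are sums of eigenforms with positive eigenvalues.

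That same fact defeats your strictness argument. The forms $\eta\wedge\omega_0$, $\eta\wedge\phi$, $L^b\omega_0$, $\eta\wedge L^b\phi$ are genuine eigenforms, so you cannot exclude equality by showing they are not. Strictness comes for free when $\omega_0\neq0$. In the vertical case the argument yields only $\le$, and that is also all the paper's own proof establishes there.

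For the first estimate, equality really occurs. On the round $S^5$ ($n=2$), $\eta\in\mathcal{H}^1_{\overline\partial,P}(0)$, yet $\lambda_{1,2}=8=4(n-m+1)$ for $m=1$. This value is attained by $d\eta$, and co-closed $2$-forms have eigenvalues $\ge 9$. So the strict inequality $\lambda_{1,m+1}<4(n-m+1)$ cannot be proved as stated; the provable conclusion is $\lambda_{1,m+1}\le 4(n-m+1)$.
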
 
When the charge is not zero, we get the following estimate for the eigenvalues
\begin{theorem} \label{eigenvalue estimate 2}Let $(M^{2n+1},g,\xi)$ be a compact Sasakian manifold. Let  $m\in \{0,\ldots, 2n+1\}$. Then, 
\begin{itemize} 
\item If $m<n$ and $q<0$ (resp. $m>n+1$ and $q>0)$, we have $\mathcal{H}%
_{\overline{\partial}}^{m}(q)=0$. 
\item If $m=n$, $q<0$ and $\mathcal{H}_{\overline{\partial}%
}^{m}(q)\neq 0$ (resp. $m=n+1$ and $q>0$), then $q^2$ is an eigenvalue of the Hodge Laplacian restricted to $m$-differential forms. 
\item When $\mathcal{H}_{\overline\partial}^{m}(q)$ is nonzero for some $q>0$ and $m<n+1$,  then the first positive eigenvalue $\lambda_{1,m}$  satisfies
$$\lambda_{1,m}<q^2+2q(n-m+1),$$
and, for $1\leq b\leq n-m$, 
$${\rm min}(\lambda_{1,m+2b}, \lambda_{1,m+2b+1})<q^2+2q(n-m+1)+4(b+1)(n-m-b+1).$$
\end{itemize}
\end{theorem}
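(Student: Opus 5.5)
The statement is Theorem~\ref{eigenvalue estimate 2}. The plan is to combine formula \eqref{eq:laplaceDel} from Theorem~\ref{Laplacian theorem} with the relation $\Delta_\partial=\Delta_{\overline\partial}-2iH\mathcal{L}_\xi$, evaluated on $\overline\partial$-harmonic forms of fixed charge. On such a form $\omega$ of degree $m$ we have $\mathcal{L}_\xi\omega=iq\omega$. Since $\Delta_{\overline\partial}\omega=0$, the relation gives
\[
\|\partial\omega\|^2+\|\partial^*\omega\|^2=\langle 2qH\omega,\omega\rangle .
\]
Here $H$ acts on $\Omega^{r,s,t}$ by a constant, which I expect to be $(r+s-n)$ up to the paper's normalization. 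This should hold on horizontal parts, with a shift on the $\eta$-part.

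\textbf{First item.} Decompose $\omega$ into $(r,s,t)$-components. Each component is again $\overline\partial$-harmonic, because $\overline\partial$ preserves $t$ and $\Delta_{\overline\partial}$ preserves the tri-degree. For $m<n$ and $q<0$, and likewise for $m>n+1$ and $q>0$, the quantity $2q\cdot(\text{eigenvalue of }H)$ is strictly negative. A nonnegative quantity cannot equal it unless $\omega=0$. Alternatively one can invoke Theorem~\ref{thm:eigenestimatedeltabar} with $\lambda=0$: this gives $0\ge -q(n-m)>0$, a contradiction. The case $m>n+1$ should follow by Hodge star or conjugation duality.

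\textbf{Second item.} For $m=n$ and $q<0$ the $H$-term vanishes, so $\partial\omega=\partial^*\omega=0$. I then plug $\omega$ into \eqref{eq:laplaceDel}. The terms $2i\eta\wedge(\overline\partial^*-\partial^*)\omega$ and $2i\xi\lrcorner(\overline\partial-\partial)\omega$ vanish, and $-\mathcal{L}_\xi^2\omega=q^2\omega$. The remaining terms $4L\Lambda\omega$ and $4\eta\wedge\xi\lrcorner H\omega$ must be handled. Using the vanishing statements for $H\omega$ and for the middle degree, I would show that $\omega$ can be taken primitive and horizontal, since the other Lefschetz pieces are killed by the decomposition of Theorem~\ref{Lefschetz theorem} and the sign argument of the first item. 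This leaves $\Delta\omega=q^2\omega$.

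\textbf{Third item.} This uses the min-max principle with test forms built from a charge-$q$ harmonic $\omega$ with $q>0$. For $\lambda_{1,m}$ I take $\omega$ itself, reduced to a primitive one by Theorem~\ref{Lefschetz theorem}, which gives $\mathcal{H}^m_{\overline\partial}(q)=\mathcal{H}^m_{\overline\partial,P}(q)$ for $q>0$. Then I compute $\langle\Delta\omega,\omega\rangle$ from \eqref{eq:laplaceDel}. The contributions are $q^2$ from $-\mathcal{L}_\xi^2$, a term at most $2q(n-m+1)$ from $-2iH\mathcal{L}_\xi=2qH$ (the $+1$ accounts for an $\eta$-component), and cross terms that vanish by the tri-degree. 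Strict inequality should come from orthogonality to harmonic forms: $\omega$ has nonzero charge, so it is orthogonal to $\ker\Delta$, since $\Delta$-harmonic forms are $\xi$-invariant. It also requires ruling out the equality case.

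For higher degrees I would use the test forms $L^b\omega$ and $\eta\wedge L^b\omega$. The commutator $[\Delta_{\overline\partial},L]$ shifts eigenvalues by multiples of $q$, as in Theorem~\ref{Lefschetz theorem}. The norm identity $\|L^b\omega\|^2=c_b\|\omega\|^2$ for primitive $\omega$ turns the $4L\Lambda$ term into $4(b+1)(n-m-b+1)$ via $\Lambda L^{b+1}$-type constants. One of $L^b\omega$ and $\eta\wedge L^b\omega$ must give a Rayleigh quotient below the bound, which explains the minimum over $\lambda_{1,m+2b}$ and $\lambda_{1,m+2b+1}$.

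\textbf{Main obstacle.} I expect the hardest part to be the exact bookkeeping of the $4\eta\wedge\xi\lrcorner H$ and $2i\eta\wedge(\ldots)$ terms on $L^b\omega$, and in particular controlling $\overline\partial(L^b\omega)$, which is nonzero because $d\eta$ is not basic-closed in the needed sense. I would first try the test form $L^b\omega$ alone. If its quotient exceeds the bound, I would combine it with $\eta\wedge L^{b}\omega$ and use $d(\eta\wedge L^b\omega)=L^{b+1}\omega-\eta\wedge dL^b\omega$.
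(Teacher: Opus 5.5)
\textbf{Items 1 and 2.} Your first item is the paper's argument, but the sign of $H$ matters. On $\Omega^{r,s,t}$ one has $H=n-r-s$, so for $\omega\in\mathcal{H}^m_{\overline\partial}(q)$ the identity reads $\|\partial\omega\|^2+\|\partial^*\omega\|^2=2q(n-m)\|\omega\|^2+2q\|\xi\lrcorner\omega\|^2$. With your guess $r+s-n$ the inequality would point the wrong way. At $m=n$, $q<0$ the $H$-term does not vanish outright. It equals $2q\|\xi\lrcorner\omega\|^2\le 0$, which forces $\xi\lrcorner\omega=0$, hence $H\omega=0$ and $\partial\omega=\partial^*\omega=0$.

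The genuine gap in your second item is primitivity. Theorem~\ref{Lefschetz theorem} removes the non-primitive Lefschetz pieces only when $q>0$. For $q<0$ they lie in $L^bE_{-2bq}(\Delta_{\overline\partial,P}(q))$ with $-2bq>0$, which need not vanish, and the sign argument of the first item says nothing about them. What you need is the dual statement, Proposition~\ref{anti Lefschetz proposition}. On charge $q$ one has $\Delta_{\overline\partial}L=L(\Delta_{\overline\partial}+2q)$, so $\Delta_{\overline\partial}(L\omega)=2q\,L\omega$ with $2q<0$, which forces $L\omega=0$. Then $\|\Lambda\omega\|^2=\langle\langle(\Lambda L-H)\omega,\omega\rangle\rangle=0$. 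Only after this does \eqref{eq:laplaceDel} collapse to $\Delta\omega=q^2\omega$.

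\textbf{Item 3.} Your Rayleigh quotient for $\omega$ omits the term $4\eta\wedge\xi\lrcorner H$ of \eqref{eq:laplaceDel}, which contributes $4(n-m+1)\|\xi\lrcorner\omega\|^2$. Your worry about $\overline\partial L^b\omega$ is unfounded: $[L,\overline\partial]=0$ and $[\partial^*,L]=-i\overline\partial$ give $\overline\partial L^b\omega=\partial^*L^b\omega=0$.

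For horizontal $\omega$, your test forms $\omega$ and $L^b\omega$ are exactly the paper's. They give the quotient $q^2+2q(n-m)+4b(n-m-b+1)$, strictly below both bounds.

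For vertical $\omega=\eta\wedge\beta$ the plan breaks down. One has $\eta\wedge L^b\omega=0$, and the quotient of $L^b\omega$ is exactly $q^2+2q(n-m+1)+4(b+1)(n-m-b+1)$. At $b=0$ this overshoots the first bound by $4(n-m+1)$, so you never reach $\lambda_{1,m}$. The missing idea is the paper's second case. The form $\beta=\xi\lrcorner\omega$ lies in $\mathcal{H}^{m-1}_{\overline\partial}(q)$ and is horizontal, primitive and co-closed, with quotient $q^2+2q(n-m+1)$. This bounds $\lambda''_{1,m-1}=\lambda'_{1,m}$, which amounts to using the exact test form $d\beta$ in degree $m$. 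Likewise $L^{b+1}\beta$ handles degree $m+2b+1$.

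\textbf{Strictness.} Strictness does not come from orthogonality to harmonic forms; that only licenses the min-max principle. In the vertical case the equality cannot be ruled out at all. On the round $\mathbb{S}^3$, $\phi_{1,1}\eta\in\mathcal{H}^{0,0,1}_{\overline\partial}(1)$ with $n=m=q=1$, while $\lambda_{1,1}=3=q^2+2q(n-m+1)$. So only $\le$ is provable there. The paper's step ``combining the two cases'' glosses over the same point.
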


Whereas on a K\"ahler manifold the operators $\Delta$ and $2\Delta_{\overline\partial}$ are the same, the following theorem shows how to obtain some eigenvalues of $\Delta$ on forms from eigenvalues of $\Delta_{\overline\partial}$ on functions on a compact Sasakian manifold.
\begin{theorem} \label{eigenvalue}
Let $(M^{2n+1},g,\xi)$ be a compact Sasakian manifold. Let $f$ be an eigenfunction of the ${\overline\partial}$-Laplacian associated with an eigenvalue $q$ of the Lie derivative, that is, $\Delta_{\overline\partial} f=\lambda f$ and  $\xi(f)=iqf$. Let $m\in\{1,\ldots,n\}$. 
Then there exist four linear combinations of the forms
$\alpha_m:=L^mf$, $\eta\wedge \overline\partial^*\alpha_m$, 
$\eta\wedge \partial^*\alpha_m$, and 
$\overline\partial\partial\alpha_{m-1}$ that are
eigenforms of degree $2m$ of the Hodge Laplacian corresponding 
to eigenvalues
$$\Theta_0(q,\lambda)=2\lambda-4m^2+4mn+2nq+q^2,$$
and/or 
$$\Theta_\pm(q,\lambda)=2\lambda-4m^2+4mn+4m+2nq-2n+q^2\pm 2\sqrt{2\lambda+(n+q)^2}.$$
When all of the forms $\overline\partial^*\alpha_m$, 
$\eta\wedge \partial^*\alpha_m$, and 
$\overline\partial\partial\alpha_{m-1}$ are nonzero, 
$\Theta_0(q,\lambda)$ has multiplicity two, and 
each $\Theta_\pm(q,\lambda)$ has multiplicity one.
\end{theorem}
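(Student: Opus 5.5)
The strategy is to show that the span
$$V=\operatorname{span}\{\alpha_m,\ \eta\wedge\overline\partial^*\alpha_m,\ \eta\wedge\partial^*\alpha_m,\ \overline\partial\partial\alpha_{m-1}\}$$
is invariant under $\Delta$. Then I compute the $4\times 4$ matrix of $\Delta|_V$ and diagonalize it. I would work throughout with formula \eqref{eq:laplaceDel} of Theorem~\ref{Laplacian theorem}, together with the Kähler identities (Proposition~\ref{Kaehler Identity prop}) and the commutator formulas of Proposition~\ref{del bar C commutator formulas}.

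The first step is to collect the basic data. The function $f$ lies in $\Omega^{0,0,0}(q)$, and since $\mathcal{L}_\xi$ commutes with $L$, $\partial$, $\overline\partial$ and their adjoints, every form in $V$ has charge $q$. Hence $\mathcal{L}_\xi^2=-q^2$ on $V$, and $-2iH\mathcal{L}_\xi$ acts as $2qH$, where $H$ is the bidegree-counting operator. Next I compute $\Delta_{\overline\partial}L^m f$. Using the Sasakian analogue of $[L,\overline\partial]$ and $[L,\overline\partial^*]$ (from Proposition~\ref{Kaehler Identity prop}, e.g. $[\overline\partial^*,L]=i\partial$ plus a term involving $\eta$ and $\mathcal{L}_\xi$ or $H$), I commute $\overline\partial^*\overline\partial$ and $\overline\partial\,\overline\partial^*$ past $L^m$. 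This expresses $\Delta_{\overline\partial}\alpha_m$ as $\lambda\alpha_m$, plus multiples of $mq\alpha_m$ and $m(n-m+1)$-type constants times $\alpha_m$ (coming from $[L,\Lambda]=H$-type relations on $L^{m-1}f$), plus a term proportional to $\overline\partial\partial\alpha_{m-1}$. The remaining terms of \eqref{eq:laplaceDel}, namely $4L\Lambda$ and $4\eta\wedge\xi\lrcorner H$, act diagonally: $L\Lambda L^m f=m(n-m+1)L^m f$ by the Lefschetz relations of Proposition~\ref{Lefschetz proposition}, and $\xi\lrcorner$ kills horizontal forms. The terms $2i\eta\wedge(\overline\partial^*-\partial^*)\alpha_m$ produce exactly the vectors $\eta\wedge\overline\partial^*\alpha_m$ and $\eta\wedge\partial^*\alpha_m$. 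The term $2i\xi\lrcorner(\overline\partial-\partial)\alpha_m$ vanishes because $\overline\partial\alpha_m$ and $\partial\alpha_m$ are horizontal.

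The second step is to apply $\Delta$ to the other three generators. For $\eta\wedge\overline\partial^*\alpha_m$ I use $\Delta(\eta\wedge\beta)$, computed via \eqref{eq:laplaceDel} and the relations $d\eta=2\,$(Kähler form), $\overline\partial\eta$, $\partial\eta$ and $\xi\lrcorner$ on $\eta\wedge\beta$. The piece $\overline\partial^*\alpha_m=\overline\partial^*L^m f$ equals $\pm i m L^{m-1}\partial f$ up to normalization, and similarly $\partial^*\alpha_m$ equals a multiple of $L^{m-1}\overline\partial f$. So these reduce to computations on $L^{m-1}\partial f$ and $L^{m-1}\overline\partial f$, where $\Delta_{\overline\partial}$ can again be moved past $L$ and $\partial$ using the commutation of $\Delta_{\overline\partial}$ with $\partial$ up to $\mathcal{L}_\xi$-terms (Theorem~\ref{Laplacian theorem}, \eqref{laplaceDels}). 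Likewise, $\overline\partial\partial\alpha_{m-1}$ is handled by pushing $\Delta$ through $\overline\partial\partial$ and $L^{m-1}$. Along the way I use $\overline\partial^*\partial f$, $\partial^*\overline\partial f$ and $\Lambda\overline\partial\partial f$, which are computed by the Kähler identities as multiples of $\lambda f$ and $qf$. Each image should lie in $V$, with coefficients polynomial in $\lambda,q,m,n$.

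The final step is linear algebra. I expect $\Delta|_V$ to be block-structured: the horizontal vectors $\alpha_m,\overline\partial\partial\alpha_{m-1}$ couple to a combination of the $\eta$-vectors, while one further combination decouples. The result should be a double eigenvalue $\Theta_0$ and a $2\times2$ block whose characteristic polynomial has discriminant $4(2\lambda+(n+q)^2)$, giving $\Theta_\pm$. I would verify this by computing the trace and determinant of the block. If the three nontrivial forms are nonzero, they are linearly independent because they lie in different $(r,s,t)$-types, or are at least distinguishable by type, and this yields the stated multiplicities. When some of them vanish, one simply restricts to the surviving subspace, which explains the ``and/or''.

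The main obstacle is the bookkeeping in the second step. One has to compute $\Delta(\eta\wedge\beta)$ and $\Delta\,\overline\partial\partial L^{m-1}f$ exactly, keeping track of the many sign and constant contributions from $d\eta$, $[L,\partial^*]$ and the $\mathcal{L}_\xi$ terms. I would first carry out the case $m=1$ carefully, fix the normalizations there, and then induct on $m$ via commutators with $L$.
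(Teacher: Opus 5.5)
Your route is the paper's: apply \eqref{eq:laplaceDel} to the four generators, show that their span $V$ is $\Delta$-invariant, and diagonalize the resulting $4\times4$ matrix $M$.

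Several of your predicted ingredients are off, though harmlessly:
\begin{itemize}
\item $[\overline\partial^*,L]=i\partial$ holds exactly; the extra $\eta$, $H$ terms occur only for $\partial_C$.
\item Lemma~\ref{Lambda to the m commutator with Delta} gives $\Delta_{\overline\partial}\alpha_m=(\lambda+2mq)\alpha_m$ exactly, with no $\overline\partial\partial\alpha_{m-1}$ term.
\item No induction on $m$ is needed, because $L\Lambda\alpha_m=m(n-m+1)\alpha_m$, $\overline\partial^*\alpha_m=im\partial\alpha_{m-1}$ and $\partial^*\alpha_m=-im\overline\partial\alpha_{m-1}$ hold for every $m$.
\end{itemize}
The step that actually closes $V$, which you only sketch, is \eqref{eq:llambdapartialalpham} together with \eqref{eq:partialstarminuspartial}. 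There $\{\partial,\overline\partial\}=-2L\mathcal{L}_\xi$ feeds $\alpha_m$ back in. Your ``restrict to the surviving subspace'' for vanishing generators is fine. The identities $\Delta v_i=\sum_j M_{ij}v_j$ hold regardless of vanishing, and the space of linear relations among the $v_i$ is $M^{T}$-invariant. Hence the spectrum of $\Delta|_V$ is a sub-multiset of that of $M$.

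The genuine gap is your multiplicity argument. $\alpha_m$ and $\overline\partial\partial\alpha_{m-1}$ are \emph{both} of type $(m,m,0)$, so type does not separate them. The paper instead uses $\langle\langle\alpha_m,\overline\partial\partial\alpha_{m-1}\rangle\rangle=im\|\partial\alpha_{m-1}\|^2$. But a purely imaginary inner product only gives linear independence over $\mathbb{R}$, not over $\mathbb{C}$.

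In fact, for $m=n$ the bundle $\Lambda^nD^{\ast1,0}\wedge\Lambda^nD^{\ast0,1}$ has rank one, and \eqref{eq:llambdapartialalpham} then forces $\overline\partial\partial\alpha_{n-1}=-\frac{i(\lambda+2nq)}{n}\alpha_n$. On $\mathbb{S}^3$ this reads $\overline\partial\partial\phi_{l,p}=-2ip(l-p+1)\phi_{l,p}L(1)$. So $\dim V\le 3$, and the second $\Theta_0$-eigenform in the paper's remark vanishes identically. For $\mathbb{S}^3$ with $1\le p\le l-1$, all three forms are nonzero, yet $\Theta_0=l(l+2)$ occurs only once in $V$. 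No argument can give the stated multiplicity when $m=n$.

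For $1\le m\le n-1$ you can repair the argument as follows.
\begin{itemize}
\item Suppose $\overline\partial\partial\alpha_{m-1}=c\alpha_m$. Injectivity of $L^{m-1}$ on $2$-forms (Lemma~\ref{commutatorpower}) gives $\overline\partial\partial f=cLf$.
\item Applying $\Lambda$ and using $[\Lambda,\overline\partial]=-i\partial^*$ gives $c=-i(\lambda+2nq)/n$.
\item On the other hand, $\{\partial,\overline\partial^*\}=0$ and $\Delta_{\overline\partial}\partial=\partial(\Delta_{\overline\partial}+2q)$ on charge $q$ give $\|\overline\partial\partial f\|^2=(\lambda+2q)(\lambda+2nq)\|f\|^2$. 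This must equal $|c|^2n\|f\|^2$.
\item Hence $(n-1)\lambda(\lambda+2nq)=0$, so $\overline\partial f=0$ or $\partial f=0$. This contradicts $\partial^*\alpha_m\neq0\neq\overline\partial^*\alpha_m$.
\end{itemize}
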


The final section is dedicated to the presentation of the investigated theory for the particular Sasakian manifold: the round 3-sphere equipped with the standard metric and Reeb vector field defining the Hopf fibration. 
In this example, we show that the cohomologies associated with the $\overline\partial$ are not zero and discuss the applications of Theorem \ref{eigenvalue estimate 2}. 
Finally, for the reader's convenience we have included an appendix presenting the detailed description of the Dolbeault operators on the K\"{a}hler cone which elucidates the formulae  for $ \partial_C$ and $\overline{ \partial}_C$ operators  defined on a Sasakian manifold.

{\bf Acknowledgment:} This work received funding by the CY Initiative of Excellence (grant ``IDEE@CY'' ANR-20-IDES-0004) and was developed during the stay of the first two authors at CY Advanced Studies whose support is gratefully acknowledged. The authors acknowledge that the research cooperation was funded by the program Excellence Initiative
– Research University at the Jagiellonian University in Krakow within the framework of the research group
Reeb-Reinhart 2022. The first named author was also supported by the french CNRS through the grant ``Postes Rouges''  during his stay at University of Lorraine. 

\section{Differentials and K\"ahler identities on Sasakian manifolds}\label{differentials section}

Let $(M^{2n+1},g,\xi )$ be a compact Sasakian manifold. We denote by $\eta
=\xi ^{\flat }$ the $1$-form corresponding to the Killing vector field $\xi $
by the musical isomorphism and by $\phi =\nabla \xi $, the transverse
complex structure. Since we have that $\phi ^{2}=-\mathbf{1}$ when
restricted to $D=\left( \ker \eta \right) \otimes \mathbb{C}$ and vanishes
on $\func{span}\nolimits_{\mathbb{C}}\left\{ \xi \right\} $, we let%
\begin{eqnarray*}
D^{1,0} &=&+i~\text{eigenspace of }\phi ; \\
D^{0,1} &=&-i~\text{eigenspace of }\phi ,
\end{eqnarray*}%
so that%
\begin{equation*}
TM\otimes \mathbb{C}=\func{span}\nolimits_{\mathbb{C}}\left\{ \xi \right\}
\oplus D^{1,0}\oplus D^{0,1},
\end{equation*}%
and, thus, 
\begin{eqnarray*}
T^{\ast }M\otimes \mathbb{C} &=&\func{span}_{\mathbb{C}}\{\eta \}\oplus
D^{\ast } \\
&=&\func{span}_{\mathbb{C}}\{\eta \}\oplus D^{\ast 1,0}\oplus D^{\ast 0,1},
\end{eqnarray*}%
with%
\begin{eqnarray*}
D^{\ast 1,0} &=&\left\{ \alpha \in D^{\ast }:\alpha \left( Z\right) =0\text{~%
}\forall Z\in D^{0,1}\right\} , \\
D^{\ast 0,1} &=&\left\{ \alpha \in D^{\ast }:\alpha \left( Z\right) =0\text{~%
}\forall Z\in D^{1,0}\right\} .
\end{eqnarray*}%
The inner product on $T^{\ast }M\otimes \mathbb{C}$ induces an inner product
on the space of all differential forms. We write%
\begin{eqnarray*}
\Omega ^{r,s,0} &=&\Gamma \left( \Lambda ^{r}D^{\ast 1,0}\wedge \Lambda
^{s}D^{\ast 0,1}\right) , \\
\Omega ^{r,s,1} &=&\Gamma \left( \eta \wedge \Lambda ^{r}D^{\ast 1,0}\wedge
\Lambda ^{s}D^{\ast 0,1}\right) .
\end{eqnarray*}%
We see that the space $\Omega ^{m}$ of complex-valued $m$-forms can be written as%
\begin{equation*}
\Omega ^{m}=\bigoplus_{r+s=m}\Omega ^{r,s,0}\oplus \bigoplus_{r+s=m-1}\Omega
^{r,s,1}=\bigoplus_{\substack{ r+s+t=m \\ 0\leq t\leq 1}}\Omega ^{r,s,t}.
\end{equation*}%

Let $\Pi ^{r,s,t}:\Omega ^{\ast }\rightarrow \Omega ^{r,s,t}$ be the
orthogonal projection. According to the above splitting, we  decompose $d$ as 
\begin{equation*}
d=\sum_{a,b,c}d^{a,b,c},
\end{equation*}%
where 
\begin{eqnarray*}
d &:&\Omega ^{r,s,t}\rightarrow \bigoplus_{\substack{ a+b+c=1 \\ -1\leq
a,b,c\leq 2}}\Omega ^{r+a,s+b,t+c},\text{ or} \\
\left. d\right\vert _{\Omega ^{r,s,t}} &=&\sum_{_{\substack{ a+b+c=1 \\ %
-1\leq a,b,c\leq 2}}}\Pi ^{r+a,s+b,t+c}d=\sum_{_{\substack{ a+b+c=1 \\ %
-1\leq a,b,c\leq 2}}}\Pi ^{+a,+b,+c}d,
\end{eqnarray*}%
with 
\begin{equation*}
\left. d^{a,b,c}\right\vert _{\Omega ^{r,s,t}}=\Pi ^{+a,+b,+c}d.
\end{equation*}%
By the given construction, the only nonzero terms among the $d^{a,b,c}$ are $%
d^{1,0,0}$,$~d^{0,1,0}$, $d^{0,0,1}$, $d^{1,1,-1}$, because $d\eta $ is of
type $\left( 1,1,0\right) $. Hence, we write 
\begin{equation*}
d=d^{1,0,0}+d^{0,1,0}+d^{0,0,1}+d^{1,1,-1}.
\end{equation*}

As $d^{2}=0$, we get after comparing the different degrees that 
\begin{equation*}
\left( d^{1,0,0}\right) ^{2}=\left( d^{0,1,0}\right) ^{2}=\left(
d^{0,0,1}\right) ^{2}=\left( d^{1,1,-1}\right) ^{2}=0
\end{equation*}%
as well as 
\begin{equation*}
\left\{ d^{0,1,0},d^{1,0,0}\right\} +\left\{ d^{1,1,-1},d^{0,0,1}\right\}
=\left\{ d^{1,0,0},d^{1,1,-1}\right\} =\left\{ d^{0,1,0},d^{0,0,1}\right\}
=\left\{ d^{0,1,0},d^{1,1,-1}\right\} =0.
\end{equation*}%
Thus, the operators 
\begin{equation*}
\partial =d^{1,0,0},\qquad \overline{\partial }=d^{0,1,0}
\end{equation*}%
are natural first-order differentials acting on the space of all forms, in analogy
to the Dolbeault differentials on complex manifolds, and the other
differentials satisfy 
\begin{eqnarray*}
d^{0,0,1} &=&\eta \wedge \mathcal{L}_{\xi }, \\
d^{1,1,-1} &=&d\eta \wedge (\xi \lrcorner )=2L(\xi \lrcorner ),
\end{eqnarray*}%
where $\mathcal{L}_{\xi }$ denotes the Lie derivative, and $L:=\frac{1}{2}%
\left( d\eta \wedge \right) $ on the set of all differential forms. We use the
symbol $\lrcorner $ to denote interior product with either a vector field or a
differential form, where we have chosen the sign so that for any vector
field $Y$ and differential form $\omega $ 
\begin{equation*}
Y\lrcorner \omega =Y^{\flat }\lrcorner \omega ,
\end{equation*}%
and $\alpha \lrcorner =(\alpha \wedge )^{\ast }$ for a differential form $\alpha 
$.  From the above, we also have 
\begin{equation}\label{eq:partialoverpartial}
\overline{\partial }\partial +\partial \overline{\partial }=-\left\{
d^{1,1,-1},d^{0,0,1}\right\} =-2L\mathcal{L}_{\xi }.
\end{equation}

Other differentials have been investigated by \cite%
{Schmude2014LaplaceOpsSasakiEinstMflds} and others, where instead the author
uses the Dolbeault operators that we denote by $\partial_C$ and $\overline{%
\partial_C}$, which turn out to be the restriction of the Dolbeault
operators on the cone $C(M)=(0,\infty)\times M$ with the induced K\"ahler
structure; see Section~\ref{appendix} for a detailed discussion. These
operators are also given as a combination of the forementioned operators 
\begin{eqnarray*}
\partial _{C} &=&d^{1,0,0}+\frac{1}{2}d^{1,1,-1}=\partial + L(\xi\lrcorner),
\\
\overline{\partial _{C}}&=&d^{0,1,0}+\frac{1}{2}d^{1,1,-1}
=\overline\partial+L(\xi\lrcorner).
\end{eqnarray*}%
A direct consequence of the above formulas is that $\partial_C^2=\overline
\partial _{C}^{2}=0$. For any smooth function $f$, we have 
\begin{equation*}
df=\xi \left( f\right) \eta +\partial _{C}f+\overline{\partial _{C}}f.
\end{equation*}
We note that $d$, $\partial$, $\overline{\partial}$, $d^{0,0,1}$, $\partial
_{C}$, $\overline{\partial _{C}}$ all obey the Leibniz rule. We emphasize
that the operators $\partial$ and $\partial_C$ (and thus $\overline{\partial}
$ and $\overline{\partial_C}$) are two different extensions to all
differential forms of the Kohn-Rossi differentials in \cite%
{KohnRossi1965ExtHolomFcnsToBndryCpxMfd}, which are applied to horizontal
forms only and have been studied by many authors in the setup of CR manifolds.

We let $\partial ^{\ast }$ and $\overline{\partial }^{\ast }$ denote the
adjoints of $\partial $ and $\overline{\partial }$ with respect to the inner
product on all forms. Also, let 
\begin{equation*}
\Lambda:=L^*.
\end{equation*}
We now compute the various commutators of the operators we have introduced.
First, we see that 
\begin{equation}
\lbrack \mathcal{L}_{\xi },\tau ]=0  \label{Lie deriv commutes with all}
\end{equation}%
for the operators $\tau =d,\partial ,\overline{\partial },L,\Lambda ,\eta
\wedge ,\xi \lrcorner $, and 
\begin{equation*}
\mathcal{L}_{\xi }^{\ast }=-\mathcal{L}_{\xi }.
\end{equation*}%
Since $\partial \eta =0$, we can deduce that 
\begin{gather}
\left\{ \partial ,\xi \lrcorner \right\} =\left\{ \overline{\partial },\xi
\lrcorner \right\} =0,\left\{ \partial ^{\ast },\eta \wedge \right\}
=\left\{ \overline{\partial }^{\ast },\eta \wedge \right\} =0  \notag \\
\left\{ \partial ,\eta \wedge \right\} =\left\{ \overline{\partial },\eta
\wedge \right\} =0,\,\,\left\{ \partial ^{\ast },\xi \lrcorner \right\}
=\left\{ \overline{\partial }^{\ast },\xi \lrcorner \right\} =0,
\label{del xi eta commutators}
\end{gather}%
where in each line, the last two equations are obtained from the first two
by taking the adjoint. We have the following commutation relations:%
\begin{equation*}
0=\left[ L,\partial \right] =\left[ L,\overline{\partial }\right] =\left[
\Lambda ,\overline{\partial }^{\ast }\right] =\left[ \Lambda ,\partial
^{\ast }\right] .
\end{equation*}%
We have used the fact that $L$ commutes with $d$ (since $d\eta $ is a closed 
$\left( 1,1,0\right) $-form) and thus with each of $d^{1,0,0}$, $d^{0,1,0}$, 
$d^{0,0,1}$, $d^{1,1,-1}$.

\begin{lemma}
Let $X$ be any vector field on $M$. Then%
\label{misc calcs lemma}

\begin{enumerate}
\item The covariant derivative satisfies $\nabla _{\xi } =\mathcal{L}_{\xi } -\sum_{k}e^{k}\wedge
\left( \phi e_{k}\right) \lrcorner $, where $\{\xi ,e_{k}\}$ is a local orthonormal frame of $TM$ with corresponding
coframe $\{\eta ,e^{k}\}$.



\item We have  $%
[X\lrcorner ,L]=(\phi X)^{\flat }\wedge $ and $[X\lrcorner ,\Lambda ]=0$ as well as $[L,\nabla_X]=X^\flat\wedge\eta\wedge$.
\end{enumerate}
\end{lemma}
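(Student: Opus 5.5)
Both parts of the lemma are pointwise identities between derivations (or between an operator and its adjoint), so I will verify them on functions and on $1$-forms and then extend. The tools are the Leibniz rule and the Sasakian relations $\nabla_X\xi=\phi X$ and $\nabla_X d\eta = 2\eta\wedge X^\flat$.

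\emph{Part (1).} Both $\nabla_\xi$ and $\mathcal{L}_\xi$ are derivations of the exterior algebra that commute with contractions. Their difference is therefore the derivation induced by a tensorial endomorphism $A$ of $T^*M$. For a vector field $Y$ we have $\mathcal{L}_\xi Y=[\xi,Y]=\nabla_\xi Y-\nabla_Y\xi=\nabla_\xi Y-\phi Y$, so $\nabla_\xi-\mathcal{L}_\xi = -\phi$ acting as a derivation on vectors. On a $1$-form $\alpha$ this gives $(\nabla_\xi\alpha-\mathcal{L}_\xi\alpha)(Y)=\alpha(\phi Y)$.

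Next I will check that the derivation $-\sum_k e^k\wedge(\phi e_k)\lrcorner$ acts on $1$-forms by the same rule. Applied to $\alpha$ it gives $-\sum_k \alpha(\phi e_k)e^k$. Evaluating at $Y=e_j$ and using the antisymmetry $g(\phi X,Y)=-g(X,\phi Y)$ (since $\xi$ is Killing) together with $\phi\xi=0$, this equals $\alpha(\phi e_j)$, while both sides vanish at $Y=\xi$. The sign needs care here: one must track whether the convention is $\alpha(\phi Y)$ or $-\alpha(\phi Y)$ and which index placement yields the stated sign. Since both sides are derivations that agree on functions (where both are zero) and on $1$-forms, they agree on all forms.

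\emph{Part (2).} For $[X\lrcorner,L]$, note that $X\lrcorner$ is an antiderivation and $d\eta$ has even degree. Hence
\[
X\lrcorner(d\eta\wedge\omega)=(X\lrcorner d\eta)\wedge\omega+d\eta\wedge X\lrcorner\omega ,
\]
so $[X\lrcorner,L]=\tfrac12(X\lrcorner d\eta)\wedge$. From $d\eta(X,Y)=(\nabla_X\eta)Y-(\nabla_Y\eta)X=g(\phi X,Y)-g(\phi Y,X)=2g(\phi X,Y)$, we get $X\lrcorner d\eta=2(\phi X)^\flat$, which gives the claim. I will double-check this against the paper's normalization of $d\eta$ and its convention for wedge versus interior products.

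For $[X\lrcorner,\Lambda]$, take adjoints. The adjoint of $[X\lrcorner,L]$ is $[\Lambda,X^\flat\wedge]$, so the first identity gives $[\Lambda,X^\flat\wedge]=(\phi X)\lrcorner$. That identity does not directly yield $[X\lrcorner,\Lambda]=0$, so I will argue instead with contractions. Writing $\Lambda=\tfrac12\sum \omega_{ij}\, e_i\lrcorner e_j\lrcorner$, where $\omega=d\eta$ is a $2$-form, $\Lambda$ is a sum of products of two interior products. Interior products anticommute, so $X\lrcorner$ commutes with each such product and hence with $\Lambda$.

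For $[L,\nabla_X]$, since $\nabla_X$ is a derivation,
\[
\nabla_X(d\eta\wedge\omega)=(\nabla_Xd\eta)\wedge\omega+d\eta\wedge\nabla_X\omega .
\]
It follows that $[L,\nabla_X]=-\tfrac12(\nabla_Xd\eta)\wedge=-\eta\wedge X^\flat\wedge=X^\flat\wedge\eta\wedge$.

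The main obstacle is sign and normalization bookkeeping: the sign of $\phi$ relative to $d\eta$, the factor $2$ in $d\eta(X,Y)=2g(\phi X,Y)$ under the paper's conventions, and the sign in part (1). I would pin these down first by testing on the standard contact form on $\mathbb{R}^3$ or the round $S^3$.
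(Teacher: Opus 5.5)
Your part (2) is essentially the paper's argument: expand $X\lrcorner(d\eta\wedge\omega)$ using $X\lrcorner d\eta=2(\phi X)^\flat$, write $\Lambda$ as a combination of products of two interior products, and differentiate $d\eta\wedge\omega$ using $\nabla_X d\eta=2\eta\wedge X^\flat$. Your normalization $d\eta(X,Y)=2g(\phi X,Y)$ is the paper's (compare $d\eta=2Y_3^\flat\wedge Y_4^\flat$ with $\phi Y_3=\nabla_{Y_3}\xi=Y_4$ on $\mathbb{S}^3$), so nothing more needs checking there.

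For part (1) your route is genuinely different. The paper applies Cartan's formula $\mathcal{L}_\xi=d\,(\xi\lrcorner)+\xi\lrcorner\, d$ to an arbitrary form, writes $d=\eta\wedge\nabla_\xi+\sum_k e^k\wedge\nabla_{e_k}$, and lets terms cancel. You instead note that $\nabla_\xi-\mathcal{L}_\xi$ and $-\sum_k e^k\wedge(\phi e_k)\lrcorner$ are both degree-zero derivations of the exterior algebra that vanish on functions. They are therefore pointwise and determined by their action on $1$-forms. This is shorter, and it shows that only $\nabla_\bullet\xi=\phi$ and $\phi\xi=0$ enter: it is the general identity $\mathcal{L}_Z=\nabla_Z+\sum_a f^a\wedge(\nabla_{f_a}Z)\lrcorner$ with $Z=\xi$. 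The paper's computation avoids this structural fact at the cost of explicit bookkeeping.

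However, both sign computations you wrote in part (1) are wrong; they merely cancel each other. Since $[\xi,Y]=\nabla_\xi Y-\nabla_Y\xi$, you get $(\nabla_\xi-\mathcal{L}_\xi)Y=+\phi Y$. Hence $(\nabla_\xi\alpha-\mathcal{L}_\xi\alpha)(Y)=-\alpha(\phi Y)$, not $+\alpha(\phi Y)$. On the other side, $-\sum_k\alpha(\phi e_k)\,e^k$ evaluated at $e_j$ is directly $-\alpha(\phi e_j)$, with no antisymmetry of $\phi$ involved. It also vanishes at $\xi$, as does $-\alpha(\phi\xi)$. So both operators send $\alpha$ to $-\alpha\circ\phi$, and the identity holds. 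For example, on $\mathbb{S}^3$ with $\alpha=Y_3^\flat$ both sides equal $Y_4^\flat$. Replace your hedged sign discussion with these two lines and the proof is complete.
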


\begin{proof} Take any differential form $\omega$ and compute
\begin{eqnarray*}
\mathcal{L}_{\xi }\omega  &=&d\left( \xi \lrcorner \omega \right) +\xi
\lrcorner d\omega  \\
&=&\eta \wedge \nabla _{\xi }\left( \xi \lrcorner \omega \right)
+\sum_{k}e^{k}\wedge \nabla _{e_{k}}\left( \xi \lrcorner \omega \right) +\xi
\lrcorner \left(\eta \wedge \nabla _{\xi } \omega \right) +\sum_{k}\xi
\lrcorner \left( e^{k}\wedge \nabla _{e_{k}}\omega \right)  \\
&=&\eta \wedge \xi \lrcorner \nabla _{\xi }\omega +\sum_{k}e^{k}\wedge
\left( \nabla _{e_{k}}\xi \right) \lrcorner \omega +\sum_{k}e^{k}\wedge \xi
\lrcorner \nabla _{e_{k}}\omega+\nabla _{\xi }\omega-\eta\wedge\xi\lrcorner\nabla_\xi\omega\\&& -\sum_{k}
e^{k}\wedge \xi \lrcorner\nabla _{e_{k}}\omega  \\
&=&\nabla _{\xi }\omega +\sum_{k}e^{k}\wedge \left( \phi e_{k}\right)
\lrcorner \omega.
\end{eqnarray*}%
This proves (1). 
Next, if $X$ is any vector field and $\omega $ is any
differential form, we write
$$\lbrack X\lrcorner ,L]\omega =\frac{1}{2} X\lrcorner \left(
d\eta\wedge \omega \right) -\frac{1}{2}  d\eta\wedge X\lrcorner \omega  =\frac{1}{2}(X\lrcorner d\eta)\wedge \omega=(\phi X)^{\flat }\wedge \omega.$$
Also, it is easy to see that $\lbrack X\lrcorner ,\Lambda ]=0$ from the fact that $\Lambda=\frac{1}{2}\sum_{k}\phi
e_{k}\lrcorner e_{k}\lrcorner$. Finally, using that $\nabla_X d\eta=2\eta\wedge X^\flat$, we compute for any differential $\omega$
$$\nabla_X(L\omega)=\frac{1}{2}\nabla_X(d\eta\wedge\omega)=\eta\wedge X^\flat\wedge\omega+\frac{1}{2} d\eta\wedge \nabla_X\omega=\eta\wedge X^\flat\wedge\omega+L(\nabla_X\omega).$$
This gives the required identity.
\end{proof}

We denote by $X^{\pm }=\frac{1}{2}(X\mp i\phi X)$ for all vector fields 
$X\perp \xi $. We have $\phi X^{\pm }=\pm iX^{\pm }$. Hence, for any
differential form $\omega \in \Omega ^{r,s,t}$, we have $(X^{-})^{\flat
}\wedge \omega \in \Omega ^{r+1,s,t}$ and $(X^{+})^{\flat }\wedge \omega \in
\Omega ^{r,s+1,t}$, where $(X^{\pm })^{\flat }:=\frac{1}{2}(X^{\flat }\mp
i(\phi X)^{\flat })$. Also we have that $X^{+}\lrcorner \omega \in \Omega
^{r-1,s,t}$ and $X^{-}\lrcorner \omega \in \Omega ^{r,s-1,t}$. We may write
any differential form $\omega $ as $\omega =\omega _{0}+\eta \wedge (\xi
\lrcorner \omega )$ with $\xi \lrcorner \omega _{0}=0$; this decomposes $%
\omega $ into its \emph{horizontal} part $\omega _{0}=\xi \lrcorner (\eta
\wedge \omega )$ and \emph{vertical} part $\eta \wedge (\xi \lrcorner \omega
)$. 
In order to find the local expression of the differentials $\partial$ and $\overline\partial$ on all forms, we consider a
local orthonormal frame $\{\xi \}\cup \{e_{k}\}$ of $TM$ and corresponding
coframe $\{\eta \}\cup \{e^{k}\}$. Using the first identity in Lemma \ref{misc calcs lemma}, we write for any differential form $\omega $  
\begin{eqnarray*}
d\omega  &=&\eta \wedge \nabla _{\xi }\omega +\sum_{k}e^{k}\wedge \nabla
_{e_{k}}\omega  \\
&=&\eta \wedge (\mathcal{L}_{\xi }\omega -\sum_{k}e^{k}\wedge \phi
e_{k}\lrcorner \omega )+\sum_{k}e^{k}\wedge \nabla _{e_{k}}\omega  \\
&=&d^{0,0,1}\omega -\sum_{k}\eta \wedge e^{k}\wedge \phi e_{k}\lrcorner
\omega +\sum_{k}e^{k}\wedge \nabla _{e_{k}}(\omega _{0}+\eta \wedge \xi
\lrcorner \omega ) \\
&=&d^{0,0,1}\omega -\sum_{k}\eta \wedge e^{k}\wedge \phi e_{k}\lrcorner
\omega +\sum_{k}e^{k}\wedge \nabla _{e_{k}}\omega _{0}+\sum_{k}e^{k}\wedge
(\phi e_{k})^{\flat }\wedge \xi \lrcorner \omega  \\
&&+\sum_{k}e^{k}\wedge \eta \wedge \nabla _{e_{k}}(\xi \lrcorner \omega ) \\
&=&d^{0,0,1}\omega +d^{1,1,-1}\omega -\sum_{k}\eta \wedge e^{k}\wedge \phi
e_{k}\lrcorner \omega +\sum_{k}e^{k}\wedge \nabla _{e_{k}}\omega
_{0}-\sum_{k}\eta \wedge e^{k}\wedge \nabla _{e_{k}}(\xi \lrcorner \omega ),
\end{eqnarray*}%
from which we deduce that 
\begin{equation}
(\partial +\overline{\partial })\omega =\sum_{k}e^{k}\wedge \nabla
_{e_{k}}\omega _{0}-\sum_{k}\eta \wedge e^{k}\wedge \nabla _{e_{k}}(\xi
\lrcorner \omega )-\sum_{k}\eta \wedge e^{k}\wedge \phi e_{k}\lrcorner
\omega .  \label{del delbar formula}
\end{equation}



We now restrict \eqref{del delbar formula} to forms $\omega$ of type $(r,s,t)$ and
observe that we can separate the $(r+1,s,t)$ and $(r,s+1,t)$ parts of the
formula. Given a specific point, we choose a local frame $\{e_{k}\}\cup
\{\xi \}$ such that all covariant derivatives of the transverse frame $%
\{e_{k}\}$ in the horizontal direction are zero at that point (we can do this since the foliation by $%
\xi $ orbits is Riemannian). Then, at the point in question, covariant
derivatives of the type $\nabla _{e_{k}}$ or $\nabla _{e_{k}^{\pm }}$ do not
change the $(r,s,t)$ type of a horizontal form. Next, observe that the $(r+1,s,t)$
part of the formula has the property that $e_{k_{1}}^{-}\lrcorner \ldots
e_{k_{s+1}}^{-}\lrcorner \omega =0$ for all possible choices of $s+1
$ indices, and $\nabla _{e_{\bullet }}$ commutes with those interior
products. We recall that $(e_{k}^{-})^{\flat }$ is of type $(1,0,0)$, so
that by restricting to forms of type $(r,s,t)$, \eqref{del delbar formula}
yields

\begin{eqnarray}
\partial \omega  &=&\Pi ^{r+1,s,t}\left( \sum_{k}e^{k}\wedge \nabla
_{e_{k}}\omega _{0}-\sum_{k}\eta \wedge e^{k}\wedge \nabla _{e_{k}}(\xi
\lrcorner \omega )-\sum_{k}\eta \wedge e^{k}\wedge \phi e_{k}\lrcorner
\omega \right)   \notag \\
&=&\sum_{k}\left( e_{k}^{-}\right) ^{\flat }\wedge \nabla _{e_{k}^{+}}\omega
_{0}-\sum_{k}\eta \wedge \left( e_{k}^{-}\right) ^{\flat }\wedge \nabla
_{e_{k}^{+}}(\xi \lrcorner \omega )-i\sum_{k}\eta \wedge (e_{k}^{-})^{\flat
}\wedge e_{k}^{+}\lrcorner \omega.  \label{del in terms of frame}
\end{eqnarray}%
Similarly, 
\begin{equation}
\overline{\partial }\omega =\sum_{k}(e_{k}^{+})^{\flat }\wedge \nabla
_{e_{k}^{-}}\omega _{0}-\sum_{k}\eta \wedge (e_{k}^{+})^{\flat }\wedge
\nabla _{e_{k}^{-}}(\xi \lrcorner \omega )+i\sum_{k}\eta \wedge
(e_{k}^{+})^{\flat }\wedge e_{k}^{-}\lrcorner \omega.
\label{del bar in terms of on frame}
\end{equation}


Let $*$ denote the Hodge star operator on differential forms and  let $%
``\deg"$ denote the degree of the corresponding differential form. Using $\ast \Pi ^{r,s,t}=\Pi ^{n-s,n-r,1-t}\ast $, we see that 
\begin{eqnarray*}
\ast \partial \ast \left( -1\right) ^{\deg }\Pi ^{r,s,t} &=&\ast \partial
\Pi ^{n-s,n-r,1-t}\ast \left( -1\right) ^{\deg } \\
&=&\ast \Pi ^{n-s+1,n-r,1-t}\partial \ast \left( -1\right) ^{\deg } \\
&=&\Pi ^{r,s-1,t}\ast \partial \ast \left( -1\right) ^{\deg }.
\end{eqnarray*}%
Similarly, 
\begin{eqnarray*}
\ast \overline\partial \ast \left( -1\right) ^{\deg }\Pi ^{r,s,t} &=&\Pi
^{r-1,s,t}\ast \overline\partial \ast \left( -1\right) ^{\deg }, \\
\ast d^{0,0,1}\ast \left( -1\right) ^{\deg }\Pi ^{r,s,t} &=&\Pi
^{r,s,t-1}\ast d^{0,0,1}\ast \left( -1\right) ^{\deg } \\
\ast d^{1,1,-1}\ast \left( -1\right) ^{\deg }\Pi ^{r,s,t} &=&\Pi
^{r-1,s-1,t+1}\ast d^{1,1,-1}\ast \left( -1\right) ^{\deg }.
\end{eqnarray*}
Hence, using the formula $\delta=\ast d\ast (-1)^{\deg}$, it follows that%
\begin{equation}  \label{adj of del in terms of *}
\overline{\partial}^{\ast } =\ast \partial\ast \left( -1\right) ^{\deg }
,\qquad \partial^{\ast } =\ast \overline{\partial}\ast \left( -1\right)
^{\deg }.
\end{equation}
As a consequence, 
\begin{equation}
\overline{\partial _{C}}^{\ast }=\ast \partial _{C}\ast \left( -1\right)
^{\deg },\qquad \partial _{C}^{\ast }=\ast \overline{\partial _{C}}\ast
\left( -1\right) ^{\deg }.  \label{del C adjoint formulas}
\end{equation}


In order to establish identities between $L,\Lambda,\partial,\overline\partial,\partial^\ast, \overline \partial^\ast$ similar to those on K\"ahler manifolds for all differential forms and not just for horizontal forms as done in \cite{thesisStromenger}, we need to introduce the following:
\begin{definition}
We define the twisted differential $d^{c}:\Omega ^{m}\rightarrow
\Omega ^{m+1}$ by%
\begin{equation*}
d^{c} =\sum_{k}(\phi e_{k})^{\flat }\wedge \nabla _{e_{k}}
\end{equation*}%
on any $m$-form, where $\{e_{k}\}$ is a local orthonormal frame
for $\xi ^{\bot }$.
\end{definition}

Note that this definition is independent of the choice of the orthonormal frame,
by the $C^{\infty }(M)$-linearity of $\nabla _{\bullet }$, $\phi \bullet $,
and $(\bullet )^{\flat }$. 

\begin{lemma} \label{expressiondc} On $\Omega^m$, the twisted differential satisfies $d^{c}=i(\overline{\partial }-\partial )+m \eta
\wedge $.
\end{lemma}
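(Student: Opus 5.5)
The plan is to compare the frame formulas \eqref{del in terms of frame} and \eqref{del bar in terms of on frame} for $\partial$ and $\overline\partial$ term by term with the definition of $d^c$. Fix a point and use the adapted local frame from the derivation of those formulas: transverse covariant derivatives of $\{e_k\}$ vanish at the point, and the frame is $\phi$-invariant, i.e.\ of the form $\{e_1,\phi e_1,\ldots\}$. Work on an $m$-form $\omega$, split as $\omega=\omega_0+\eta\wedge\beta$ with $\beta=\xi\lrcorner\omega$ and $\xi\lrcorner\omega_0=0$. Then $d^c\omega=d^c\omega_0+d^c(\eta\wedge\beta)$, and the three groups of terms in $i(\overline\partial-\partial)\omega$ are matched separately.

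\emph{Step 1: algebraic rewriting of $d^c$.} From $(X^\pm)^\flat=\frac12(X^\flat\mp i(\phi X)^\flat)$ we get $(\phi e_k)^\flat=i\big((e_k^+)^\flat-(e_k^-)^\flat\big)$, and we write $\nabla_{e_k}=\nabla_{e_k^+}+\nabla_{e_k^-}$. Expanding $\sum_k(\phi e_k)^\flat\wedge\nabla_{e_k}$ produces the mixed sums $\sum_k(e_k^\pm)^\flat\wedge\nabla_{e_k^\mp}$ and the diagonal sums $\sum_k(e_k^\pm)^\flat\wedge\nabla_{e_k^\pm}$. The diagonal sums vanish: replacing $e_k$ by $\phi e_k$ (which permutes the frame up to sign) and using $(\phi e)^\pm=\pm i\,e^\pm$, each diagonal sum is multiplied by $(\pm i)^2=-1$, so it equals its own negative. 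Hence
\[
d^c=i\sum_k\Big((e_k^+)^\flat\wedge\nabla_{e_k^-}-(e_k^-)^\flat\wedge\nabla_{e_k^+}\Big).
\]
Applied to $\omega_0$, this is exactly $i$ times the difference of the first terms of \eqref{del bar in terms of on frame} and \eqref{del in terms of frame}.

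\emph{Step 2: the vertical part.} Since $\nabla_{e_k}\eta=(\phi e_k)^\flat$, we have $\nabla_{e_k}(\eta\wedge\beta)=(\phi e_k)^\flat\wedge\beta+\eta\wedge\nabla_{e_k}\beta$. The first piece contributes $\sum_k(\phi e_k)^\flat\wedge(\phi e_k)^\flat\wedge\beta=0$. The second contributes $-\eta\wedge d^c\beta$. Applying Step 1 to $\beta$ shows that this matches the middle terms $\mp\sum_k\eta\wedge(e_k^\pm)^\flat\wedge\nabla_{e_k^\mp}(\xi\lrcorner\omega)$ of $i(\overline\partial-\partial)\omega$.

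\emph{Step 3: the zeroth-order terms (the key check).} The remaining terms of $i(\overline\partial-\partial)\omega$ are
\[
-\eta\wedge\sum_k\Big((e_k^+)^\flat\wedge e_k^-\lrcorner+(e_k^-)^\flat\wedge e_k^+\lrcorner\Big)\omega .
\]
Expanding in real terms and using that the frame is $\phi$-invariant, the operator in parentheses equals $\frac12\sum_k\big(e^k\wedge e_k\lrcorner+(\phi e_k)^\flat\wedge\phi e_k\lrcorner\big)=\sum_k e^k\wedge e_k\lrcorner$. This is the horizontal-degree operator, which acts as $m-t$ on $\Omega^{r,s,t}$ with $r+s+t=m$. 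After wedging with $\eta$, only the $t=0$ part survives, so these terms equal $-m\,\eta\wedge\omega$. Combining Steps 1--3 gives $i(\overline\partial-\partial)\omega=d^c\omega-m\,\eta\wedge\omega$, which is the claim.

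The main obstacle I expect is bookkeeping of signs and factors. The cancellation of the diagonal sums in Step 1 and the factor $\frac12$ in Step 3 both rely on the frame being $\phi$-invariant, so I would verify these carefully. I would also check that $d^c$ is independent of the frame, so that choosing this adapted frame is legitimate.
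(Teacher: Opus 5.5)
Your proposal is correct and follows essentially the same route as the paper: it matches the frame formulas for $\partial$ and $\overline\partial$ term by term against $d^c$, uses $\{d^c,\eta\wedge\}=0$ (your Step 2) to absorb the vertical terms, and identifies the zeroth-order terms as $-m\,\eta\wedge\omega$. Your Step 1 also makes explicit a step the paper uses without comment: the diagonal sums $\sum_k (e_k^\pm)^\flat\wedge\nabla_{e_k^\pm}$ vanish, which is what allows the mixed sums to be rewritten as $\sum_k (e_k^+-e_k^-)^\flat\wedge\nabla_{e_k}$.
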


\begin{proof}
From \eqref{del in terms of frame} and \eqref{del bar in terms of on frame}
we write for any $\omega =\omega _{0}+\eta \wedge \xi \lrcorner \omega $,%
\begin{eqnarray*}
i(\overline{\partial }-\partial )\omega  &=&i\Big(\sum_{k}(e_{k}^{+})^{\flat
}\wedge \nabla _{e_{k}^{-}}\omega _{0}-\sum_{k}\eta \wedge
(e_{k}^{+})^{\flat }\wedge \nabla _{e_{k}^{-}}(\xi \lrcorner \omega
)+i\sum_{k}\eta \wedge (e_{k}^{+})^{\flat }\wedge e_{k}^{-}\lrcorner \omega
 \\
&&-\sum_{k}(e_{k}^{-})^{\flat }\wedge \nabla _{e_{k}^{+}}\omega
_{0}+\sum_{k}\eta \wedge (e_{k}^{-})^{\flat }\wedge \nabla _{e_{k}^{+}}(\xi
\lrcorner \omega )+i\sum_{k}\eta \wedge (e_{k}^{-})^{\flat }\wedge
e_{k}^{+}\lrcorner \omega \Big) \\
&=&i\Big(\sum_{k}(e_{k}^{+}-e_k^-)^{\flat }\wedge \nabla _{e_{k}}\omega
_{0}-\sum_{k}\eta \wedge (e_{k}^{+}-e_k^-)^{\flat }\wedge \nabla _{e_{k}}(\xi
\lrcorner \omega )\\&&+i\sum_{k}\eta \wedge e^{k}\wedge e_{k}\lrcorner \omega \Big).
\end{eqnarray*}%
Now from $(e_{k}^{+}-e_{k}^{-})^{\flat
}=-i(\phi e_{k})^{\flat }$, the above equation becomes
\begin{eqnarray*}
i(\overline{\partial }-\partial )\omega  &=&\sum_{k}(\phi e_{k})^{\flat
}\wedge \nabla _{e_{k}}\omega _{0}-\sum_{k}\eta \wedge (\phi e_{k})^{\flat
}\wedge \nabla _{e_{k}}(\xi \lrcorner \omega )-\sum_{k}\eta \wedge
e^{k}\wedge e_{k}\lrcorner \omega  \\
&=&d^{c}\omega _{0}-\eta \wedge d^{c}(\xi \lrcorner \omega )-\sum_{k}\eta
\wedge e^{k}\wedge e_{k}\lrcorner \omega \\
&=&d^{c}\omega _{0}-\eta \wedge d^{c}(\xi \lrcorner \omega )-\deg(\omega)\eta
\wedge \omega.
\end{eqnarray*}
Using that $\{d^c,\eta\wedge\}=0$, which be easily proven, we deduce   
$i(\overline{\partial }-\partial )\omega=d^{c}\omega -\deg(\omega)\eta
\wedge  \omega$. This finishes the proof.
\end{proof}

\begin{lemma}
\label{delta c lemma}
The formal adjoint $\delta ^{c}=\left( d^{c}\right) ^{\ast }:\Omega
^{m+1}\rightarrow \Omega ^{m}$ satisfies%
\begin{equation*}
\delta ^{c}=-i(\overline{\partial }^{\ast }-\partial ^{\ast })+m\xi\lrcorner.
\end{equation*}
\end{lemma}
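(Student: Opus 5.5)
The approach is simply to take formal adjoints in Lemma~\ref{expressiondc}, paying attention to degrees and to the complex conjugation coming from the Hermitian inner product.

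On $\Omega^m$ Lemma~\ref{expressiondc} gives $d^c=i(\overline{\partial}-\partial)+m\,\eta\wedge$ as a map $\Omega^m\to\Omega^{m+1}$. Hence $\delta^c:\Omega^{m+1}\to\Omega^m$ is the adjoint of this operator restricted to $\Omega^m$. We have $(i\overline{\partial})^*=-i\overline{\partial}^*$ and $(i\partial)^*=-i\partial^*$, because the inner product on complex forms is conjugate-linear in the second slot. We also have $(\eta\wedge)^*=\xi\lrcorner$, by the convention $\alpha\lrcorner=(\alpha\wedge)^*$ together with $\xi\lrcorner=\eta\lrcorner$. The scalar $m$ is real, so it is unchanged. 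Therefore, for $\omega\in\Omega^{m+1}$ and $\beta\in\Omega^m$ compactly supported (or $M$ compact),
\[
(\delta^c\omega,\beta)=(\omega,d^c\beta)=\big(\omega,\, i(\overline{\partial}-\partial)\beta+m\,\eta\wedge\beta\big)=\big(-i(\overline{\partial}^*-\partial^*)\omega+m\,\xi\lrcorner\omega,\ \beta\big).
\]
This holds for every $\beta\in\Omega^m$. Both sides of the claimed identity land in $\Omega^m$, since $\overline{\partial}^*$, $\partial^*$ and $\xi\lrcorner$ all lower the degree by one. The identity therefore follows pointwise.

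The only step needing care is the bookkeeping of the degree. The coefficient in Lemma~\ref{expressiondc} is the degree of the \emph{source} form. So in the adjoint it becomes $m=\deg-1$ on $(m+1)$-forms, not the degree of the argument of $\delta^c$; this is exactly why the statement is phrased with $\Omega^{m+1}\to\Omega^m$. One should also check that $\delta^c$ is genuinely the formal adjoint of the frame-defined $d^c$. This is harmless, because we define $\delta^c$ as $(d^c)^*$ and Lemma~\ref{expressiondc} expresses $d^c$ through operators whose adjoints are already known.
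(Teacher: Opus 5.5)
Your proposal is correct and is exactly the intended argument. The paper gives no separate proof; it treats the lemma as the immediate formal adjoint of Lemma~\ref{expressiondc}, using $(i\overline{\partial})^*=-i\overline{\partial}^*$, $(\eta\wedge)^*=\xi\lrcorner$, and the fact that the coefficient $m$ is the degree of the source of $d^c$. Your degree bookkeeping is also what yields the coefficient $m-1$ when the lemma is applied on $\Omega^{m}$ in the proof of Proposition~\ref{Kaehler Identity prop}.
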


Now, we state the following identities that were proven in \cite[Prop. 1.2.6]{thesisStromenger} for all forms, but we include the proof for completeness.
\begin{lemma}
\label{L delta commutator lemma} On $\Omega^m$, we have %
\begin{eqnarray*}
\lbrack L,\delta ] &=&d^{c}-(2n-m) \eta \wedge  \\
\text{ }[\Lambda ,d] &=&-\delta ^{c}+(2n-m+1) \xi \lrcorner. 
\end{eqnarray*}
\end{lemma}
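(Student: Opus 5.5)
We prove the first identity by a direct frame computation and then obtain the second from it by taking adjoints. Fix a point and a local orthonormal frame $\{\xi,e_k\}$ with coframe $\{\eta,e^k\}$, chosen so that the horizontal covariant derivatives of the $e_k$ vanish at that point. We use
\begin{equation*}
\delta=-\xi\lrcorner\nabla_\xi-\sum_k e_k\lrcorner\nabla_{e_k}
\end{equation*}
on $\Omega^{m+1}$, noting that $\nabla_\xi\xi=0$. For $\omega\in\Omega^{m+1}$ we compute
\begin{equation*}
[L,\delta]\omega=-[L,\xi\lrcorner\nabla_\xi]\omega-\sum_k[L,e_k\lrcorner\nabla_{e_k}]\omega .
\end{equation*}
Each term is expanded with the Leibniz rule for commutators, $[L,X\lrcorner\nabla_X]=[L,X\lrcorner]\nabla_X+X\lrcorner[L,\nabla_X]$. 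Lemma~\ref{misc calcs lemma}(2) then supplies the two ingredients, $[L,X\lrcorner]=-(\phi X)^\flat\wedge$ and $[L,\nabla_X]=X^\flat\wedge\eta\wedge$.

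The vertical term contributes nothing. Since $\phi\xi=0$, we have $[L,\xi\lrcorner]=0$. Moreover $\xi\lrcorner(\xi^\flat\wedge\eta\wedge\cdot)=\xi\lrcorner(\eta\wedge\eta\wedge\cdot)=0$.

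The horizontal terms produce the two pieces of the formula:
\begin{equation*}
-\sum_k[L,e_k\lrcorner\nabla_{e_k}]=\sum_k(\phi e_k)^\flat\wedge\nabla_{e_k}-\sum_k e_k\lrcorner\left(e^k\wedge\eta\wedge\cdot\right).
\end{equation*}
The first sum is $d^c$ by definition. For the second, note that $\nabla_{e_k}$ here acts on $L\omega$ or on $\omega$ appropriately; one should double-check that in $[L,\nabla_X]$ the form being differentiated is $\omega$ itself. We then use the identity $\sum_k e_k\lrcorner(e^k\wedge\alpha)=(2n-\deg_h\alpha)\alpha$, where $\deg_h$ counts horizontal degree and the sum runs over the $2n$ horizontal directions. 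Applied to $\alpha=\eta\wedge\omega$, we must track the degree carefully. Which degree appears in the statement depends on which side of the commutator acts on the $m$-form: the statement says "on $\Omega^m$", and $[L,\delta]$ maps $\Omega^m$ to $\Omega^{m+1}$. Using $\sum_k e_k\lrcorner e^k\wedge$ applied to $\eta\wedge\omega$ gives
\begin{equation*}
\eta\wedge\left(2n-\#\{\text{horizontal degree of }\omega\}\right)\omega .
\end{equation*}
We then combine the horizontal and vertical parts of $\omega$, together with the $\eta\wedge$ factor already present, to get exactly $(2n-m)\eta\wedge\omega$. The vertical part of $\omega$ is killed by $\eta\wedge$, so only the horizontal part matters, and its horizontal degree is $m$. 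This yields $[L,\delta]=d^c-(2n-m)\eta\wedge$ on $\Omega^m$.

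The main obstacle is this bookkeeping of signs and degrees. Specifically, we must get the correct sign convention in $[L,X\lrcorner]$ from Lemma~\ref{misc calcs lemma}, and the correct count of $\sum_k e_k\lrcorner e^k\wedge$ on mixed horizontal/vertical forms, so that the constant comes out as $2n-m$. We will verify the result on a simple case, for instance $m=0$, where $[L,\delta]f=-\delta(Lf)$ should equal $d^cf-2nf\eta$ because $\delta(d\eta)=4n\eta$ in this normalization.

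For the second identity we take formal adjoints of the first, applied on $\Omega^{m-1}$. This gives $[L,\delta]^*=[d,\Lambda]$, hence $[\Lambda,d]=-\big(d^c-(2n-m+1)\eta\wedge\big)^*=-\delta^c+(2n-m+1)\xi\lrcorner$, as maps $\Omega^{m}\to\Omega^{m-1}$. The shift from $m$ to $m+1$ in the constant comes precisely from applying the first identity on $\Omega^{m-1}$ so that its adjoint acts on $\Omega^m$.
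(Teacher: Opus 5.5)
Your proposal is correct and follows essentially the same route as the paper: you expand $[L,\delta]$ over a local orthonormal frame using $[L,X\lrcorner]=-(\phi X)^\flat\wedge$ and $[L,\nabla_X]=X^\flat\wedge\eta\wedge$ from Lemma~\ref{misc calcs lemma}, and you evaluate the trace term as $(2n-m)\eta\wedge\omega$ (the paper sums over the full frame $\{\xi,e_k\}$ using $\deg(\eta\wedge\omega)=m+1$, which is equivalent to your horizontal-degree count). You then get the second identity by taking adjoints on $\Omega^{m-1}$, as the paper does; when writing it up, take $\omega\in\Omega^m$ from the start rather than $\Omega^{m+1}$, and drop the hedge about which form $\nabla_{e_k}$ differentiates, since $[L,\nabla_X]$ is zeroth order.
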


\begin{proof}
Using $[L,X\lrcorner]=-\phi(X)^\flat\wedge$ and $[L,\nabla_X]=X^\flat\wedge\eta$ from Lemma \ref{misc calcs lemma}, we consider a local orthonormal frame $\{f_k\}=\{\xi,e_k\}$ of $TM$ and write for any differential form $%
\omega\in \Omega^m $,%
\begin{eqnarray*}
L\delta\omega  &=&-\sum_{k} L(f_{k}\lrcorner \nabla
_{f_{k}}\omega)\\
&=&-\sum_{k} f_{k}\lrcorner L(\nabla _{f_{k}}\omega)+(\phi(f_k))^\flat\wedge \nabla_{f_k}\omega\\
&=&\delta L\omega- \sum_k f_k\lrcorner (f_k^\flat\wedge\eta\wedge\omega)+d^c\omega\\
&=&\delta L\omega+(m-2n)\eta\wedge\omega+d^c\omega
\end{eqnarray*}%
which is the first formula. The second formula follows by taking adjoints.
\end{proof}
We use the previous computations to state the analogues of K\"ahler identities on all forms.
\begin{proposition}\label{Kaehler Identity prop}
(Analogues of K\"{a}hler identities on Sasakian manifolds) On the Sasakian
manifold $(M^{2n+1},g,\xi )$, we have the following identities \nopagebreak 
\begin{eqnarray}
\left[ \Lambda ,\overline{\partial }\right]  &=&-i\partial ^{\ast },
\label{eq:kaehler ids} \\
\left[ \Lambda ,\partial \right]  &=&i\overline{\partial }^{\ast },  \notag
\\
\left[ \overline{\partial }^{\ast },L\right]  &=&i\partial ,  \notag \\
\left[ \partial ^{\ast },L\right]  &=&-i\overline{\partial }.  \notag
\end{eqnarray}
\end{proposition}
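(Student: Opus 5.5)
The plan is to derive the four identities from Lemma \ref{L delta commutator lemma} together with the expressions of $d^c$ and $\delta^c$ in Lemma \ref{expressiondc} and Lemma \ref{delta c lemma}. Once we have $[\Lambda,\overline\partial]=-i\partial^*$, the second identity follows by complex conjugation, since $\Lambda$ is a real operator and $\overline{\overline\partial\omega}=\partial\overline\omega$. The third and fourth identities then follow by taking formal adjoints: $[\Lambda,\partial]^*=[\partial^*,L]$, and so on.

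\textbf{Step 1: write $[\Lambda,d]$ in graded form.} On $\Omega^m$, insert $\delta^c=-i(\overline\partial^*-\partial^*)+m\,\xi\lrcorner$ into the second formula of Lemma \ref{L delta commutator lemma}. Here $\delta^c$ maps $\Omega^{m+1}\to\Omega^m$, so one must check which degree the formula refers to; in $[\Lambda,d]$ acting on $\Omega^m$ the output has degree $m-1$. This gives
$$[\Lambda,d]=i(\overline\partial^*-\partial^*)+c_m\,\xi\lrcorner$$
for some explicit constant $c_m$ depending on $m$ and $n$.

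\textbf{Step 2: expand the left side and compare types.} Write $d=\partial+\overline\partial+\eta\wedge\mathcal L_\xi+2L(\xi\lrcorner)$. Then
$$[\Lambda,d]=[\Lambda,\partial]+[\Lambda,\overline\partial]+[\Lambda,\eta\wedge]\mathcal L_\xi+2[\Lambda,L\,\xi\lrcorner].$$
By Lemma \ref{misc calcs lemma}, $\Lambda$ commutes with $\xi\lrcorner$. Since $\eta$ is orthogonal to $D^*$ and $\Lambda=\frac12\sum\phi e_k\lrcorner e_k\lrcorner$, $\Lambda$ also commutes with $\eta\wedge$. The last term equals $2[\Lambda,L]\xi\lrcorner$, and this operator preserves the bidegree shift $(0,0,-1)$.

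Now decompose by type. On $\Omega^{r,s,t}$:
\begin{itemize}
\item $[\Lambda,\overline\partial]$ shifts the type by $(-1,0,0)$;
\item $[\Lambda,\partial]$ shifts it by $(0,-1,0)$;
\item $\partial^*$ lowers $r$, and $\overline\partial^*$ lowers $s$;
\item the $\xi\lrcorner$ terms shift it by $(0,0,-1)$.
\end{itemize}
Projecting the identity of Step 1 onto the type $(r-1,s,t)$ gives $[\Lambda,\overline\partial]=-i\partial^*$. Projecting onto the type $(r,s-1,t)$ gives $[\Lambda,\partial]=i\overline\partial^*$. As a byproduct, the $(0,0,-1)$ components must match; this yields a consistency relation involving $[\Lambda,L]$, which we need not use.

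\textbf{Step 3: adjoints.} Taking adjoints of the two identities from Step 2 gives
$$[\overline\partial^*,L]=(-[\Lambda,\overline\partial])^*\cdot(-1)=\ldots=i\partial,$$
with the sign tracked through $(i\,\cdot)^*=-i$. Similarly $[\partial^*,L]=-i\overline\partial$.

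The main obstacle is the type bookkeeping in Step 2. We must be sure that $\partial^*$ has pure shift $(-1,0,0)$, which follows from \eqref{adj of del in terms of *} and $*\Pi^{r,s,t}=\Pi^{n-s,n-r,1-t}*$. We must also confirm that no stray term from $L\,\xi\lrcorner$ or $\eta\wedge\mathcal L_\xi$ lands in types $(-1,0,0)$ or $(0,-1,0)$. If the degree conventions in Lemma \ref{L delta commutator lemma} cause mismatched constants, these only affect the $\xi\lrcorner$ component, so the claimed identities are insensitive to them.
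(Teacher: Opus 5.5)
Your proposal is correct and is essentially the paper's proof. The paper also substitutes Lemma \ref{delta c lemma} into the second identity of Lemma \ref{L delta commutator lemma}, projects $[\Lambda,d]\omega$ onto $\Omega^{r-1,s,t}$ so that the $\xi\lrcorner$ terms (with their degree-dependent constants) drop out, and then obtains the remaining three identities by conjugation and adjoints. Your expansion of $d$ into its four components simply spells out why $\Pi^{r-1,s,t}[\Lambda,d]=[\Lambda,\overline{\partial}]$ on $\Omega^{r,s,t}$, a step the paper uses without comment.
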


\begin{proof}
Using Lemmas \ref{delta c lemma} and \ref{L delta
commutator lemma}, we compute for any $\omega \in \Omega ^{r,s,t}$ with $m=r+s+t$, 
\begin{eqnarray*}
\left[ \Lambda ,\overline{\partial }\right] \omega  &=&\Pi ^{r-1,s,t}\left[
\Lambda ,d\right] \omega =\Pi ^{r-1,s,t}
(-\delta ^{c}+(2n-m+1) \xi \lrcorner)
\omega   \\
&=&\Pi ^{r-1,s,t}\left( 
i(\overline{\partial }^{\ast }-\partial ^{\ast })-(m-1)\xi\lrcorner
+(2n-m+1) \xi \lrcorner
\right) \omega =-i\partial ^{\ast }\omega .
\end{eqnarray*}%
The other identities follow by taking adjoints and conjugates.
\end{proof}

\begin{corollary}
We have%
\begin{equation}\label{eq:partialstarpartialcom}
\left\{ \partial ,\overline{\partial }^{\ast }\right\} =\left\{ \overline{%
\partial },\partial ^{\ast }\right\} =0.
\end{equation}
\end{corollary}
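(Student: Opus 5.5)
Following the Kähler-manifold argument, I will express $\overline{\partial}^{\ast}$ as a commutator using Proposition~\ref{Kaehler Identity prop} and then expand $\{\partial,\overline{\partial}^\ast\}$ with the Jacobi identity for graded commutators. The identity $[\Lambda,\partial]=i\overline{\partial}^{\ast}$ gives
\begin{equation*}
-i\left\{ \partial ,\overline{\partial }^{\ast }\right\} =\left\{ \partial ,[\Lambda ,\partial ]\right\} .
\end{equation*}
Here $\Lambda$ is even and $\partial$ is odd, so expanding directly yields
\begin{equation*}
\left\{ \partial ,[\Lambda ,\partial ]\right\} =\partial \Lambda \partial -\partial ^{2}\Lambda +\Lambda \partial ^{2}-\partial \Lambda \partial =[\Lambda ,\partial ^{2}].
\end{equation*}
The first and last terms cancel, leaving $[\Lambda,\partial^2]$.

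Next I use that $\partial ^{2}=(d^{1,0,0})^{2}=0$, which was derived from $d^{2}=0$ by comparing tridegrees. This makes the right-hand side vanish, so $\{\partial ,\overline{\partial }^{\ast }\}=0$.

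For the second identity there are two routes. One is to take adjoints: $\{\partial,\overline{\partial}^\ast\}^\ast=\{\overline{\partial},\partial^\ast\}$. The other is to repeat the computation with $[\Lambda,\overline{\partial}]=-i\partial^\ast$ and $\overline{\partial}^2=0$, which gives $i\{\overline{\partial},\partial^\ast\}=[\Lambda,\overline{\partial}^2]=0$. Either way, the second identity in \eqref{eq:partialstarpartialcom} follows.

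I see no real obstacle here. The only point needing care is the sign bookkeeping when expanding the graded commutator. Note also that the argument uses $\partial^{2}=0$ and never the anticommutator $\{\partial,\overline{\partial}\}$, which is nonzero on a Sasakian manifold by \eqref{eq:partialoverpartial}.
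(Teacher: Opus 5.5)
Your proof is correct and is the paper's own argument: rewrite $\overline{\partial}^{\ast}$ via $[\Lambda,\partial]=i\overline{\partial}^{\ast}$, expand to get $\{\partial,[\Lambda,\partial]\}=[\Lambda,\partial^{2}]=0$, and obtain the second identity by taking adjoints or conjugating. One harmless slip in the scalar prefactors: $\{\partial,[\Lambda,\partial]\}$ equals $i\{\partial,\overline{\partial}^{\ast}\}$, not $-i\{\partial,\overline{\partial}^{\ast}\}$, and in the barred version $\{\overline{\partial},[\Lambda,\overline{\partial}]\}=-i\{\overline{\partial},\partial^{\ast}\}$; this does not matter because the right-hand side is zero.
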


\begin{proof}
From the proposition above, we write
\begin{eqnarray*}
\left\{ \partial ,\overline{\partial }^{\ast }\right\}  &=&-i\left\{
\partial ,\left[ \Lambda ,\partial \right] \right\} =-i(\partial (\Lambda
\partial -\partial \Lambda )+(\Lambda \partial -\partial \Lambda )\partial )
\\
&=&-i(\partial \Lambda \partial -\partial \Lambda \partial )=0.
\end{eqnarray*}
\end{proof}
In the following, we state similar identities for the differentials $\partial_C=\partial+L\left( \xi \lrcorner \right)$ and $\overline{\partial 
}_{C}=\overline{\partial }+L\left( \xi \lrcorner \right)$. For this, we define $H$ as 
\begin{equation}
H:=\left[ \Lambda ,L\right] =(n-\mathrm{deg})+\eta \wedge (\xi
\lrcorner ).  \label{eq:comm lambda L}
\end{equation}%
An easy computation shows that $\left[ H,L\right] =-2L$ and $\left[ H,\Lambda %
\right] =2\Lambda $. Note that $H,L,\Lambda $ all commute with $\eta \wedge $
and $\xi \lrcorner $ and of course with $\mathcal{L}_{\xi }$. We  now apply these identities to prove the following:

\begin{proposition}
\label{del bar C commutator formulas} (formulas stated in \cite%
{Schmude2014LaplaceOpsSasakiEinstMflds} for Sasakian-Einstein case) The
following identities hold on a Sasakian manifold $(M^{2n+1},g,\xi )$
\begin{enumerate}
\item $\left\{ \partial _{C},\xi \lrcorner \right\} =\left\{ \overline{%
\partial _{C}},\xi \lrcorner \right\} =0,\left\{ \partial _{C}^{\ast },\eta
\wedge \right\} =\left\{ \overline{\partial _{C}}^{\ast },\eta \wedge
\right\} =0$

\item $\left\{ \partial _{C},\eta \wedge \right\} =\left\{ \overline{%
\partial _{C}},\eta \wedge \right\} =L,\,\,\left\{ \partial _{C}^{\ast },\xi
\lrcorner \right\} =\left\{ \overline{\partial _{C}}^{\ast },\xi \lrcorner
\right\} =\Lambda $

\item $\overline{\partial _{C}}\partial _{C}+\partial _{C}\overline{\partial
_{C}}=-2L\mathcal{L}_{\xi }$

\item $\overline{\partial _{C}}^{\ast }=\ast \partial _{C}\ast \left(
-1\right) ^{\deg },\qquad \partial _{C}^{\ast }=\ast \overline{\partial _{C}}%
\ast \left( -1\right) ^{\deg }$

\item $0=[L,\partial _{C}]=[L,\overline{\partial _{C}}]=[\Lambda ,\overline{%
\partial _{C}}^{\ast }]=[\Lambda ,\partial _{C}^{\ast }]$

\item $\left[ H,\partial _{C}\right] =-\partial _{C}-L(\xi \lrcorner )$

\item $[\Lambda ,\overline{\partial _{C}}]=-i\partial _{C}^{\ast }+i\eta
\wedge \Lambda +H(\xi \lrcorner )$

\item $[\Lambda ,\partial _{C}]=i\overline{\partial _{C}}^{\ast }-i\eta
\wedge \Lambda +H(\xi \lrcorner )$

\item $[\overline{\partial _{C}}^{\ast },L]=i\partial _{C}-iL(\xi \lrcorner
)+\eta \wedge H$

\item $[\partial _{C}^{\ast },L]=-i\overline{\partial _{C}}+iL(\xi \lrcorner
)+\eta \wedge H$.
\end{enumerate}
\end{proposition}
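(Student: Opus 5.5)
The plan is to derive each item from the formulas $\partial_C=\partial+L(\xi\lrcorner)$ and $\overline{\partial_C}=\overline\partial+L(\xi\lrcorner)$. We then use the identities already established for $\partial,\overline\partial$ together with the elementary algebra of $L,\Lambda,H,\eta\wedge,\xi\lrcorner$: these four operators commute with $\eta\wedge,\xi\lrcorner$, and $\{\eta\wedge,\xi\lrcorner\}=1$, $(\xi\lrcorner)^2=0$. Each identity for the adjoints or for $\overline{\partial_C}$ is obtained from the corresponding one for $\partial_C$ by taking adjoints or complex conjugates, so only half of the items need direct computation.

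For (1), combine $\{\partial,\xi\lrcorner\}=0$ from \eqref{del xi eta commutators} with $\{L(\xi\lrcorner),\xi\lrcorner\}=L\{\xi\lrcorner,\xi\lrcorner\}=0$. For (2), use $\{\partial,\eta\wedge\}=0$ and $\{L\xi\lrcorner,\eta\wedge\}=L\{\xi\lrcorner,\eta\wedge\}=L$. For (3), expand the product. The cross terms are $\{\overline\partial,L\xi\lrcorner\}+\{\partial,L\xi\lrcorner\}=L(\{\overline\partial,\xi\lrcorner\}+\{\partial,\xi\lrcorner\})=0$, using $[L,\partial]=[L,\overline\partial]=0$, and $(L\xi\lrcorner)^2=0$. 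The remaining part is $\{\partial,\overline\partial\}=-2L\mathcal L_\xi$ by \eqref{eq:partialoverpartial}. Item (4) is \eqref{del C adjoint formulas}. For (5), $L$ commutes with $\partial$, with itself and with $\xi\lrcorner$; adjoints give the $\Lambda$ statements.

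For (6), note that $H=(n-\deg)+\eta\wedge\xi\lrcorner$. Since $\partial$ raises degree by one and anticommutes with both $\eta\wedge$ and $\xi\lrcorner$, it commutes with $\eta\wedge\xi\lrcorner$, so $[H,\partial]=-\partial$. For the other summand, $[H,L\xi\lrcorner]=[H,L]\xi\lrcorner+L[H,\xi\lrcorner]=-2L\xi\lrcorner+L\xi\lrcorner=-L\xi\lrcorner$, where $[H,\xi\lrcorner]=\xi\lrcorner$ because $\xi\lrcorner$ lowers degree and commutes with $\eta\wedge\xi\lrcorner$ on the relevant forms. Summing gives $[H,\partial_C]=-\partial-L\xi\lrcorner=-\partial_C$. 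This does not match the stated $-\partial_C-L(\xi\lrcorner)$, so this is where I would recheck the computation of $[H,\xi\lrcorner]$ and the exact convention for $H$. In particular, on forms with a $\eta$ factor, $\eta\wedge\xi\lrcorner$ acts as $1$, so $\xi\lrcorner$ maps to horizontal forms and changes $\eta\wedge\xi\lrcorner$ from $1$ to $0$. That gives $[H,\xi\lrcorner]=\xi\lrcorner-\xi\lrcorner=0$, whence $[H,L\xi\lrcorner]=-2L\xi\lrcorner$, and $[H,\partial_C]=-\partial_C-L\xi\lrcorner$ as claimed. The careful bookkeeping of $H$ against $\xi\lrcorner$ is the delicate point.

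For (7), write $[\Lambda,\overline{\partial_C}]=[\Lambda,\overline\partial]+[\Lambda,L]\xi\lrcorner+L[\Lambda,\xi\lrcorner]$. By Proposition \ref{Kaehler Identity prop} this equals $-i\partial^*+H\xi\lrcorner$, using $[\xi\lrcorner,\Lambda]=0$ from Lemma \ref{misc calcs lemma}. Then substitute $\partial^*=\partial_C^*-\eta\wedge\Lambda$, which comes from $(L\xi\lrcorner)^*=\eta\wedge\Lambda$. This yields $-i\partial_C^*+i\eta\wedge\Lambda+H\xi\lrcorner$. Item (8) follows by conjugation, since $\Lambda,H,\xi\lrcorner$ are real, and (9)–(10) follow by taking adjoints of (8) and (7), using $H^*=H$ and $(H\xi\lrcorner)^*=\eta\wedge H$. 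The main obstacle is purely sign and degree bookkeeping, mainly in (6) and in the adjoint of $H\xi\lrcorner$.
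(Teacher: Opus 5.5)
Your proposal is correct and follows the paper's proof essentially line by line. Items (1)--(5) come from $\partial_C=\partial+L(\xi\lrcorner)$ and the known relations for $\partial$. Item (6) uses $[H,\partial]=-\partial$ and $[H,L\xi\lrcorner]=-2L\xi\lrcorner$, item (7) uses $[\Lambda,L]=H$ and $(L\xi\lrcorner)^*=\eta\wedge\Lambda$, and items (8)--(10) follow by conjugation and adjoints. In writing up (6), delete the first attempt: $H$ commutes with $\xi\lrcorner$, since the degree shift and the $\eta\wedge\xi\lrcorner$ term cancel, as you eventually found. So that step involves no real delicacy.
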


\begin{proof}
(1) follows from $\{\partial ,\xi \lrcorner \}=[\partial ,L]=0$. (2) follows
from $\{\partial ,\eta \wedge \}=0$ and $\{L\left( \xi \lrcorner \right)
,\eta \wedge \}=L$. From $\{\partial ,\overline{\partial }\}=-2L\mathcal{L}%
_{\xi }$, we have 
\begin{eqnarray*}
\{\partial +L\left( \xi \lrcorner \right) ,\overline{\partial }+L\left( \xi
\lrcorner \right) \} &=&\{\partial ,\overline{\partial }\}+(\partial +%
\overline{\partial })L\left( \xi \lrcorner \right) +L\xi \lrcorner \left( 
\overline{\partial }+\partial \right) \\
&=&-2L\mathcal{L}_{\xi }+L(\partial +\overline{\partial })\left( \xi
\lrcorner \right) +L\xi \lrcorner \left( \overline{\partial }+\partial
\right) \\
&=&-2L\mathcal{L}_{\xi }+L\{\partial,\xi\lrcorner\}+L\{\overline\partial,\xi\lrcorner\}=-2L\mathcal{L}_{\xi },
\end{eqnarray*}%
proving (3). Equation (4) follows from the formulas for $\partial ^{\ast }$ and $%
\Lambda $ in terms of the Hodge star. (5) follows from $[L,\partial ]=[L,\xi
\lrcorner ]=0$. Next, from the fact that $[H,\partial ]=-\partial$ which can be proven using the identities from Proposition \ref{Kaehler Identity prop}, we compute
$$\left[ H,\partial _{C}\right]=[H,\partial +L\xi \lrcorner ]=[H,\partial
]+HL\xi \lrcorner -L\xi \lrcorner H 
=-\partial -2L\xi \lrcorner=-\partial _{C}-L(\xi \lrcorner ),$$
yielding (6). At last, 
$$\lbrack \Lambda ,\overline{\partial _{C}}] =[\Lambda ,\overline{\partial }%
]+\Lambda L\xi \lrcorner -L\xi \lrcorner \Lambda =-i\partial ^{\ast }+\left[
\Lambda ,L\right] \xi \lrcorner =-i\partial _{C}^{\ast }+i\eta \wedge \Lambda +H\xi \lrcorner ,$$
proving (7). The identities (8), (9), and (10) follow by conjugating and/or
taking adjoints.
\end{proof}

\section{Laplacians on Sasakian manifolds}
\label{Sasaki section}

We define the Dolbeault Laplace operators associated with $\partial $
and $\overline{\partial }$ by the following: 
\begin{equation*}
\Delta _{\partial }=\partial \partial ^{\ast }+\partial ^{\ast }\partial
,~\Delta _{\overline{\partial }}=\overline{\partial }\overline{\partial }%
^{\ast }+\overline{\partial }^{\ast }\overline{\partial }.
\end{equation*}

We now prove Theorem \ref{Laplacian theorem} stated in the introduction.  

\begin{proof}[Proof of Theorem \ref{Laplacian theorem}] Using the identities in Proposition \ref{Kaehler Identity prop}, we compute 
\begin{eqnarray*}
\Delta _{\partial } &=&\partial ^{\ast }\partial +\partial \partial ^{\ast
}=i\left[ \Lambda ,\overline{\partial }\right] \partial +i\partial \left[
\Lambda ,\overline{\partial }\right]  \\
&=&i\Lambda \overline{\partial }\partial -i\overline{\partial }\Lambda
\partial +i\partial \Lambda \overline{\partial }-i\partial \overline{%
\partial }\Lambda  \\
&=&i\Lambda \overline{\partial }\partial -i\overline{\partial }\left( \left[
\Lambda ,\partial \right] +\partial \Lambda \right) +i\left( \left[ \partial
,\Lambda \right] +\Lambda \partial \right) \overline{\partial }-i\partial 
\overline{\partial }\Lambda  \\
&=&i\Lambda \overline{\partial }\partial -i\overline{\partial }\left( i%
\overline{\partial }^{\ast }+\partial \Lambda \right) +i\left( -i\overline{%
\partial }^{\ast }+\Lambda \partial \right) \overline{\partial }-i\partial 
\overline{\partial }\Lambda  \\
&=&\overline{\partial }\overline{\partial }^{\ast }+\overline{\partial }%
^{\ast }\overline{\partial }-i\left( \overline{\partial }\partial +\partial 
\overline{\partial }\right) \Lambda +i\Lambda \left( \overline{\partial }%
\partial +\partial \overline{\partial}\right).
\end{eqnarray*}%
Since $\overline{\partial }\partial +\partial \overline{\partial }=-2L%
\mathcal{L}_{\xi }$ and $\left[ \Lambda ,L\right] =H,$ we get%
\begin{eqnarray*}
\Delta _{\partial } &=&\Delta _{\overline{\partial }}+2iL\mathcal{L}_{\xi
}\Lambda -2i\Lambda L\mathcal{L}_{\xi } \\
&=&\Delta _{\overline{\partial }}+2i\left[ L,\Lambda \right] \mathcal{L}%
_{\xi } \\
&=&\Delta _{\overline{\partial }}-2iH\mathcal{L}_{\xi },
\end{eqnarray*}%
which is the required formula. Next, we want to express the Hodge Laplacian in terms of $\Delta_{\overline\partial}$. Since $\left[ \Lambda , %
\overline{\partial }\right] =-i\partial ^{\ast }$, we write
$$\left\{ \overline{\partial },\partial ^{\ast }\right\}  =\left\{ \overline{%
\partial },i\left[ \Lambda ,\overline{\partial }\right] \right\}  
=i\overline{\partial }\Lambda \overline{\partial }-i\overline{\partial }%
^{2}\Lambda +i\Lambda \overline{\partial }^{2}-i\overline{\partial }\Lambda 
\overline{\partial }=0,$$
and likewise $\left\{ \partial ,\overline{\partial }^{\ast }\right\} =0$.
Then%
\begin{equation*}
\left\{ \left( \partial +\overline{\partial }\right) ,\left( \partial +%
\overline{\partial }\right) ^{\ast }\right\} =\Delta _{\partial }+\Delta _{%
\overline{\partial }}.
\end{equation*}%
Thus, as $d^{1,1,-1}=2L(\xi \lrcorner )$ and $d^{0,0,1}=\eta
\wedge \mathcal{L}_{\xi }$, we have
\begin{eqnarray*}
\Delta  &=&\left\{ \partial +\overline{\partial }+d^{1,1,-1}+d^{0,0,1},%
\left( \partial +\overline{\partial }+d^{1,1,-1}+d^{0,0,1}\right) ^{\ast
}\right\}  \\
&=&\left\{ \partial +\overline{\partial },\partial ^{\ast }+\overline{%
\partial }^{\ast }\right\} +\{d^{1,1,-1}+d^{0,0,1},\partial ^{\ast }+%
\overline{\partial }^{\ast }\} \\
&&+\left\{ \delta ^{-1,-1,1}+\delta ^{0,0,-1},\partial +\overline{\partial }%
\right\} +\left\{ d^{1,1,-1}+d^{0,0,1},\delta ^{-1,-1,1}+\delta
^{0,-1,0}\right\}. 
\end{eqnarray*}%
We consider%
\begin{eqnarray*}
\left\{ \delta ^{-1,-1,1}+\delta ^{0,0,-1},\partial \right\}  &=&2\eta
\wedge \Lambda \partial -\mathcal{L}_{\xi }(\xi \lrcorner \partial
)+2\partial (\eta \wedge \Lambda )-\partial \mathcal{L}_{\xi }(\xi \lrcorner
) \\
&=&2\eta \wedge \left[ \Lambda ,\partial \right] -\mathcal{L}_{\xi }\left\{
\xi \lrcorner ,\partial \right\} .
\end{eqnarray*}%
Using $\left\{ \xi \lrcorner ,\partial \right\} =0$ and $\left[ \Lambda
,\partial \right] =i\overline{\partial }^{\ast }$, we get that 
\begin{equation*}
\left\{ \delta ^{-1,-1,1}+\delta ^{0,0,-1},\partial \right\} =2i\eta \wedge 
\overline{\partial }^{\ast }.
\end{equation*}%
By taking conjugates and adding,%
\begin{equation*}
\left\{ \delta ^{-1,-1,1}+\delta ^{0,0,-1},\partial +\overline{\partial }%
\right\} =2i\eta \wedge \left( \overline{\partial }^{\ast }-\partial ^{\ast
}\right) .
\end{equation*}%
The adjoint is%
\begin{eqnarray*}
\left\{ d^{1,1,-1}+d^{0,0,1},\partial ^{\ast }+\overline{\partial }^{\ast
}\right\}  &=&-2i\left( \overline{\partial }-\partial \right) \xi \lrcorner 
\\
&=&2i\xi \lrcorner \left( \overline{\partial }-\partial \right) .
\end{eqnarray*}%
Also, since $\xi \lrcorner $ and $\eta \wedge $ commute with $L$ and $%
\Lambda $, and since $\mathcal{L}_{\xi }$ commutes with all four of the
other operators, we have
\begin{eqnarray*}
\left\{ d^{1,1,-1}+d^{0,0,1},\delta ^{-1,-1,1}+\delta ^{0,0,-1}\right\} 
&=&4\left\{ L(\xi \lrcorner ),\eta \wedge \Lambda \right\} -2\left\{ L(\xi
\lrcorner ),\mathcal{L}_{\xi }(\xi \lrcorner )\right\}  \\
&&+2\left\{ \eta \wedge \mathcal{L}_{\xi },\eta \wedge \Lambda \right\}
-\left\{ \eta \wedge \mathcal{L}_{\xi },\mathcal{L}_{\xi }(\xi \lrcorner
)\right\}  \\
&=&4\xi \lrcorner (\eta \wedge L\Lambda )+4\eta \wedge \xi \lrcorner \Lambda
L-\mathcal{L}_{\xi }^{2} \\
&=&4L\Lambda +4\eta \wedge \xi \lrcorner \lbrack \Lambda ,L]-\mathcal{L}%
_{\xi }^{2} \\
&=&4L\Lambda +4\eta \wedge \xi \lrcorner H-\mathcal{L}_{\xi }^{2}.
\end{eqnarray*}%
Thus, plugging the above computations yield%
\begin{eqnarray*}
\Delta  &=&\Delta _{\partial }+\Delta _{\overline{\partial }}+2i\eta \wedge
\left( \overline{\partial }^{\ast }-\partial ^{\ast }\right) +2i\xi
\lrcorner \left( \overline{\partial }-\partial \right) +4L\Lambda +4\eta
\wedge \xi \lrcorner H-\mathcal{L}_{\xi }^{2} \\
&=&2\Delta _{\overline{\partial }}-2iH\mathcal{L}_{\xi }+2i\eta \wedge
\left( \overline{\partial }^{\ast }-\partial ^{\ast }\right) +2i\xi
\lrcorner \left( \overline{\partial }-\partial \right) +4L\Lambda +4\eta
\wedge \xi \lrcorner H-\mathcal{L}_{\xi }^{2},
\end{eqnarray*}
which is Equality \eqref{eq:laplaceDel}. 
\end{proof}



We define the Dolbeault Laplacian operator associated with $\partial _{C}$ and $%
\overline{\partial }_{C}$ by the following: 
\begin{equation*}
\Delta _{\partial _{C}}=\partial _{C}\partial _{C}^{\ast }+\partial
_{C}^{\ast }\partial _{C},~~\Delta _{\overline{\partial _{C}}}=\overline{%
\partial }_{C}\overline{\partial }_{C}^{\ast }+\overline{\partial }%
_{C}^{\ast }\overline{\partial }_{C}.~~
\end{equation*}
As before, we will find a relation between these Dolbeault Laplacian operators with the Hodge Laplacian. These formulas were stated in \cite{Schmude2014LaplaceOpsSasakiEinstMflds} for Sasaki-Einstein manifolds. We have
\begin{theorem}\label{Laplacian C theorem}
(formulas stated in \cite{Schmude2014LaplaceOpsSasakiEinstMflds} for
Sasakian-Einstein case) Let $(M^{2n+1},g,\xi )$ be a Sasakian manifold. The Dolbeault Laplacians $\Delta_{\partial _{C}}$ and $%
\Delta_{\overline{\partial _{C}}}$ are related by 
\begin{equation}  \label{eq:deltacdeltabarc}
\Delta _{\partial _{C}}=\Delta _{\overline{\partial _{C}}}+i\eta\wedge(%
\overline{\partial _{C}}^{\ast }+\partial _{C}^{\ast })-i\left( \overline{%
\partial _{C}}+\partial _{C}\right) \xi\lrcorner-2iH\mathcal{L}_{\xi }.
\end{equation}%
The Hodge Laplacian on $M$ is expressed by the formula 
\begin{equation}  \label{eq:deltahodgedeltac}
\Delta=2\Delta _{\overline{\partial _{C}}}-\mathcal{L}_{\xi }^{2}-2iH%
\mathcal{L}_{\xi }+2L\Lambda +2H\eta\wedge (\xi\lrcorner)+2i\left( \eta\wedge%
\overline{\partial _{C}}^{\ast }-\overline{\partial _{C}}\xi\lrcorner\right).
\end{equation}
\end{theorem}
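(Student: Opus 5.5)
The plan is to reduce both formulas to Theorem \ref{Laplacian theorem} by expanding the $C$-Laplacians in terms of $\Delta_{\partial}$ and $\Delta_{\overline\partial}$. We use $\overline{\partial_C}=\overline\partial+L(\xi\lrcorner)$ and $\overline{\partial_C}^{\ast}=\overline\partial^{\ast}+\eta\wedge\Lambda$, where the latter holds because $L$ commutes with $\xi\lrcorner$. Expanding gives
\begin{equation*}
\Delta_{\overline{\partial_C}}=\Delta_{\overline\partial}+\{\overline\partial,\eta\wedge\Lambda\}+\{L(\xi\lrcorner),\overline\partial^{\ast}\}+\{L(\xi\lrcorner),\eta\wedge\Lambda\}.
\end{equation*}
The three correction terms are computed with the earlier identities, as follows.

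\textbf{First correction term.} Since $\{\overline\partial,\eta\wedge\}=0$, we get $\{\overline\partial,\eta\wedge\Lambda\}=\eta\wedge[\Lambda,\overline\partial]$. By Proposition \ref{Kaehler Identity prop} this equals $-i\eta\wedge\partial^{\ast}$.

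\textbf{Second correction term.} This is the adjoint of the first, so it equals $i\partial(\xi\lrcorner)=-i\xi\lrcorner\partial$, using $\{\partial,\xi\lrcorner\}=0$ from \eqref{del xi eta commutators}.

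\textbf{Third correction term.} Use that $\xi\lrcorner$ and $\eta\wedge$ commute with $L,\Lambda$, and that $\{\xi\lrcorner,\eta\wedge\}=1$. Then the term becomes $L\Lambda+\eta\wedge\xi\lrcorner[\Lambda,L]=L\Lambda+\eta\wedge\xi\lrcorner H$.

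Hence
\begin{equation*}
\Delta_{\overline{\partial_C}}=\Delta_{\overline\partial}-i\eta\wedge\partial^{\ast}-i\xi\lrcorner\partial+L\Lambda+\eta\wedge\xi\lrcorner H.
\end{equation*}
The conjugate computation gives
\begin{equation*}
\Delta_{\partial_C}=\Delta_{\partial}+i\eta\wedge\overline\partial^{\ast}+i\xi\lrcorner\overline\partial+L\Lambda+\eta\wedge\xi\lrcorner H.
\end{equation*}

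\textbf{Formula \eqref{eq:deltahodgedeltac}.} Subtract twice the expansion of $\Delta_{\overline{\partial_C}}$ from \eqref{eq:laplaceDel}. The $\partial^{\ast}$ and $\xi\lrcorner\partial$ terms cancel, leaving
\begin{equation*}
\Delta-2\Delta_{\overline{\partial_C}}=-2iH\mathcal L_\xi+2i\eta\wedge\overline\partial^{\ast}+2i\xi\lrcorner\overline\partial+2L\Lambda+2\eta\wedge\xi\lrcorner H-\mathcal L_\xi^2.
\end{equation*}
We then rewrite everything in $C$-operators. First, $\eta\wedge\overline\partial^{\ast}=\eta\wedge\overline{\partial_C}^{\ast}$, since $\eta\wedge\eta\wedge=0$. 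Second, $\xi\lrcorner\overline\partial=\xi\lrcorner\overline{\partial_C}$, since $\xi\lrcorner\xi\lrcorner=0$. Third, by Proposition \ref{del bar C commutator formulas}(1), $\xi\lrcorner\overline{\partial_C}=-\overline{\partial_C}\,\xi\lrcorner$. Finally, $H$ commutes with $\eta\wedge\xi\lrcorner$. These give \eqref{eq:deltahodgedeltac}.

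\textbf{Formula \eqref{eq:deltacdeltabarc}.} Subtract the two expansions and insert \eqref{laplaceDels}:
\begin{equation*}
\Delta_{\partial_C}-\Delta_{\overline{\partial_C}}=-2iH\mathcal L_\xi+i\eta\wedge(\overline\partial^{\ast}+\partial^{\ast})+i\xi\lrcorner(\overline\partial+\partial).
\end{equation*}
Replace the operators by their $C$-versions exactly as above, which is legitimate because the added terms $\eta\wedge\Lambda$ and $L(\xi\lrcorner)$ are killed by $\eta\wedge$ and $\xi\lrcorner$. Then move $\xi\lrcorner$ to the right using Proposition \ref{del bar C commutator formulas}(1), producing the term $-i(\overline{\partial_C}+\partial_C)\xi\lrcorner$.

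The main obstacle is purely sign bookkeeping. The delicate points are the adjoint step $(\eta\wedge\partial^{\ast})^{\ast}=\partial(\xi\lrcorner)$, the anticommutation of $\xi\lrcorner$ past $\partial$ and $\overline{\partial_C}$, and the order of $H$ relative to $\eta\wedge\xi\lrcorner$. I would check each sign on a simple form, for instance a function $f$ and $\eta\wedge f$.
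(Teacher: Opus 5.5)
Your proposal is correct and follows essentially the same route as the paper: you establish $\Delta_{\overline{\partial_C}}=\Delta_{\overline\partial}-i\eta\wedge\partial^{\ast}-i\xi\lrcorner\partial+L\Lambda+H\eta\wedge\xi\lrcorner$, which is the paper's \eqref{eq:relationdeltabardelta}, conjugate it, and combine it with \eqref{laplaceDels} and \eqref{eq:laplaceDel}. You then pass to $C$-operators using $\eta\wedge\eta\wedge=0$, $\xi\lrcorner\xi\lrcorner=0$ and Proposition \ref{del bar C commutator formulas}(1), which is exactly what the paper does. The only differences are cosmetic: you get $\{L(\xi\lrcorner),\overline\partial^{\ast}\}$ as the adjoint of the first correction term rather than directly as $\xi\lrcorner[L,\overline\partial^{\ast}]$, and $\overline{\partial_C}^{\ast}=\overline\partial^{\ast}+\eta\wedge\Lambda$ needs only $(AB)^{\ast}=B^{\ast}A^{\ast}$, not the commutation of $L$ with $\xi\lrcorner$.
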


\begin{proof}
We first prove the identity 
\begin{equation}\label{eq:relationdeltabardelta}
\Delta_{\overline{\partial_C}}=\Delta_{\overline{\partial}%
}-i\eta\wedge\partial^*-i\xi\lrcorner\partial+L
\Lambda+H\eta\wedge\xi\lrcorner.
\end{equation}
Indeed, we compute%
\begin{eqnarray*}
\Delta _{\overline{\partial _{C}}} &=&\overline{\partial _{C}}^{\ast }%
\overline{\partial _{C}}+\overline{\partial _{C}}\overline{\partial _{C}}%
^{\ast } \\
&=&\left( \overline{\partial }^{\ast }+\eta \wedge \Lambda \right) \left( 
\overline{\partial }+L\xi \lrcorner \right) +\left( \overline{\partial }%
+L\xi \lrcorner \right) \left( \overline{\partial }^{\ast }+\eta \wedge
\Lambda \right)  \\
&=&\Delta _{\overline{\partial }}+\left\{ \eta \wedge \Lambda ,\overline{%
\partial }\right\} +\left\{ \overline{\partial }^{\ast },L\xi \lrcorner
\right\} +\eta \wedge \Lambda L\xi \lrcorner +L\xi \lrcorner \eta \wedge
\Lambda  \\
&=&\Delta _{\overline{\partial }}+\eta \wedge \lbrack \Lambda ,\overline{%
\partial }]+\xi \lrcorner \lbrack L,\overline{\partial }^{\ast }]+\Lambda
L\eta \wedge \xi \lrcorner +L\Lambda \xi \lrcorner \eta \wedge  \\
&=&\Delta _{\overline{\partial }}-i\eta \wedge \partial^\ast-i\xi \lrcorner \partial+L\Lambda
+H\eta \wedge \xi \lrcorner ,
\end{eqnarray*}%
using $[\Lambda ,L]=H$ and $\left\{ \eta \wedge ,\xi \lrcorner \right\} =1$ and the K\"{a}hler identities from Proposition \ref{Kaehler Identity prop}. 
Taking now the conjugate of \eqref{eq:relationdeltabardelta} and using Equality \eqref{laplaceDels}

\begin{eqnarray*}
\Delta_{\partial_C}&=&\Delta_{\partial}%
+i\eta\wedge\overline\partial^*+i\xi\lrcorner\overline\partial+L
\Lambda+H\eta\wedge\xi\lrcorner\\
&=&\Delta_{\overline\partial}-2iH\mathcal{L}_\xi+i\eta\wedge\overline\partial^*+i\xi\lrcorner\overline\partial+L
\Lambda+H\eta\wedge\xi\lrcorner\\
&=&\Delta_{\overline\partial_C}+i\eta\wedge(\partial^\ast+\overline\partial^*)+i\xi\lrcorner(\partial+\overline\partial)-2iH\mathcal{L}_\xi\\
&=&\Delta _{\overline{\partial _{C}}}+i\eta\wedge(%
\overline{\partial _{C}}^{\ast }+\partial _{C}^{\ast })-i\left( \overline{%
\partial _{C}}+\partial _{C}\right) \xi\lrcorner-2iH\mathcal{L}_{\xi },
\end{eqnarray*}
which is Equality \eqref{eq:deltacdeltabarc}. To prove \eqref{eq:deltahodgedeltac}, we use Equation \eqref{eq:laplaceDel}
\begin{eqnarray*}
\Delta &=&2\Delta _{\overline{\partial }}-2iH\mathcal{L}_{\xi }+2i\eta \wedge
\left( \overline{\partial }^{\ast }-\partial ^{\ast }\right) +2i\xi
\lrcorner \left( \overline{\partial }-\partial \right) +4L\Lambda +4\eta
\wedge \xi \lrcorner H-\mathcal{L}_{\xi }^{2}\\
&\stackrel{\eqref{eq:relationdeltabardelta}}{=}&2\Delta_{\overline{\partial_C}}+2L
\Lambda+2H\eta\wedge\xi\lrcorner-2iH\mathcal{L}_{\xi }+2i\eta \wedge
\overline{\partial }^{\ast } +2i\xi
\lrcorner \overline{\partial }-\mathcal{L}_{\xi }^{2}\\
&=&2\Delta_{\overline{\partial_C}}+2L
\Lambda+2H\eta\wedge\xi\lrcorner-2iH\mathcal{L}_{\xi }+2i\eta \wedge
\overline{\partial }_C^{\ast }-2i\overline{\partial }_C(\xi
\lrcorner) -\mathcal{L}_{\xi }^{2},
\end{eqnarray*}
to get the required identity. 
\end{proof}

\section{Spectrum and de Rham-Hodge theory for the Dolbeault Laplacians}
\label{Spectral theory section}

In this section, we let the operator $D$ to be one of the differentials $%
\partial$, $\partial_C$, $\overline{\partial}$, or $\overline{\partial_C}$, and
let $D^*$ be its formal adjoint on the space of all differential forms. We denote by $H_D^*(M)$ the corresponding cohomology group and by $\Delta_D=D^*D+DD^*$ the corresponding Laplacian. We wish to relate the kernel of $\Delta_D$
to the cohomology group $H_D^*(M)$ as in the case of standard de Rham-Hodge
theory.

Note that this task is not trivial, since we are considering unbounded operators on the space of all differential forms, and they
are not elliptic. We comment that in the important papers \cite%
{KohnRossi1965ExtHolomFcnsToBndryCpxMfd}, \cite{Kohn1964BdriesCpxMflds} (see
also \cite{Tanaka1975DiffGeomStudy}, \cite%
{BarlettaDragomirShahid2025FoundCRGeom} for more modern notation), the de
Rham-Hodge theory needed for the study of strictly pseudoconvex CR-manifolds
was worked out; note that Sasakian manifolds are particular examples of these.
In these works, the authors considered the Kohn-Rossi cohomology of
horizontal differential forms (those forms $\alpha $ for which $\xi
\lrcorner \alpha =0$). Restricting to such forms, the operators $\overline{%
\partial }$ and $\overline{\partial _{C}}$ are both equal to their
restriction $\overline{\partial}_{KR}$ and square to zero, yielding respectively the
Kohn-Rossi cohomology groups, which we denote as  $H_{\overline{\partial}_{KR}%
}^{r,s}(M)$. One may define the adjoint $\overline{\partial}_{KR}^{\ast }$
of $\overline{\partial}_{KR}$ with respect to the $L^{2}$-inner product on
the space of horizontal differential forms and then produce the subelliptic
Laplacian $\square _{KR}$. Then, for horizontal differential forms of degree $%
(r,s)$ with $1\leq s\leq n-1$ (called \emph{tangential forms} in the CR\
manifold world), each Kohn-Rossi cohomology class contains a unique $\square
_{KR}$-harmonic form, in fact the form with the smallest $L^{2}$-norm. It
turns out that in those degrees the Kohn-Rossi cohomology is
finite dimensional. For $s=0$, the cohomology is typically
infinite dimensional, as it contains the CR functions.

\begin{proof}[Proof of Theorem \ref{rst Hodge Theorem}] The operators $\overline{\partial}$ and $\overline{\partial_C}$ act on the
space of all differential forms, and in fact they are not equal as
operators on this larger space. To work out the de Rham-Hodge theory, we
partition the space of all differential forms using the action of $%
\mathcal{L}_\xi$. The flow of the vector field $\xi$ produces a
one-parameter family of isometries, whose closure is a (compact) torus $T$
of isometries on $M$. These induce an isometric action of $h\in T$
on differential forms that extends the action of $\mathcal{L}_\xi$ to all of 
$T$. Since $\mathcal{L}_\xi$ commutes with each $\Delta_D$, so does the
action of $T$, and thus all eigenspaces of these operators can be decomposed
into irreducible representation spaces of $T$, which are all
one-dimensional. These representation spaces of differential forms are
specific eigenspaces of the Casimir operator $C_T$ associated with $T$; the
possible eigenvalues $|\lambda|^2$ are determined by the weight lattice of $%
T $.

The operators $\Delta _{D}$ on differential forms of degree $m$ are not
elliptic; however, we have seen in \eqref{eq:laplaceDel} and %
\eqref{eq:deltahodgedeltac} that the symmetric
operators 
\begin{equation*}
\Delta _{D}-\frac{1}{2}\mathcal{L}_{\xi }^{2}
\end{equation*}%
have the same principal symbol as half of the Laplacian (see Equations %
\eqref{eq:deltahodgedeltac} and \eqref{eq:laplaceDel} and their conjugates),
and thus these modified operators are elliptic.
Note that in \cite{GNT2016Rigidity}, the authors made a similar observation in proving that the basic Laplacian on a Sasakian manifold is the restriction of a strongly elliptic operator on all differential forms.

Each eigenspace of the (ordinary) Laplacian on a given Sasakian manifold $M$
of dimension $2n+1$ is finite dimensional and can be orthogonally decomposed
into eigenspaces of $C_{T}$, which in turn can be orthogonally decomposed
into (pure imaginary, finite dimensional) eigenspaces of $\mathcal{L}_{\xi }$%
. In fact, the entire space of $L^{2}$-differential forms over $M$ can be
orthogonally decomposed into eigenspaces of $\mathcal{L}_{\xi }$. We say
that an eigenform $\alpha $ of $\mathcal{L}_{\xi }$ has \textbf{charge }$q$
if $\mathcal{L}_{\xi }\alpha =iq\alpha $; this terminology comes from the
physics literature.

In addition, since $\mathcal{L}_\xi$ and $C_T$ commute with all of the
operators $\Delta_D$ and $\Delta_D-\frac 12\mathcal{L}_\xi^2$, the
eigenspaces of these operators can also be orthogonally decomposed into
eigenspaces of $\mathcal{L}_\xi$; however, the eigenspaces of $\Delta_D$
need not be finite dimensional a priori. What is true is that when
restricting to the spaces of differential forms of degree $m$ and charge $q$%
, the eigenspaces of $\Delta_D$ corresponding to eigenvalue $\lambda$, which
are the eigenspaces of $\Delta_D-\frac 12\mathcal{L}_\xi^2$ corresponding to
eigenvalue $\lambda+\frac 12 q^2$, are finite dimensional. Thus the spectrum
of each $\Delta_D$ is discrete and consists of eigenvalues, and each
eigenspace is orthogonally decomposed as a direct sum of the finite
dimensional subspaces of eigenforms with a specific charge.

Since the second order operator $\Delta _{D}-\frac{1}{2}\mathcal{L}_{\xi
}^{2}-\frac{1}{2}q^{2}$ is symmetric and elliptic, it follows from standard
Hodge theory that the space $\Omega ^{m}$ of differential $m$-forms
satisfies 
\begin{equation*}
\Omega ^{m}=\ker \left( \Delta _{D}-\frac{1}{2}\mathcal{L}_{\xi }^{2}-%
\frac{1}{2}q^{2}\right) ^{(m)}\oplus \func{im}\left( \Delta _{D}-\frac{1}{2}%
\mathcal{L}_{\xi }^{2}-\frac{1}{2}q^{2}\right) ^{(m)},
\end{equation*}%
where the superscript $(m)$ refers to the restriction to $m$-forms, and
ellipticity implies that the kernels consist of smooth differential forms.
This is an orthogonal direct sum, and since $\mathcal{L}_{\xi }$ commutes
with relevant operators, we intersect with the differential forms of charge $%
q$ to obtain the direct sum decomposition 
\begin{eqnarray*}
\Omega ^{m}(q) &=&\ker \left( \Delta _{D}-\frac{1}{2}\mathcal{L}_{\xi
}^{2}-\frac{1}{2}q^{2}\right) ^{(m)}(q)\oplus \func{im}\left( \Delta _{D}-%
\frac{1}{2}\mathcal{L}_{\xi }^{2}-\frac{1}{2}q^{2}\right) ^{(m)}(q) \\
&=&\ker \left( \Delta _{D}^{(m)}(q)\right) \oplus \func{im}\left( \Delta
_{D}^{(m)}(q)\right)  \\
&=&\ker \left( \Delta _{D}^{(m)}(q)\right) \oplus \func{im}%
(D^{(m-1)}(q))\oplus \func{im}(D^{\ast (m+1)}(q))
\end{eqnarray*}%
where $(q)$ indicates the restriction to forms of charge $q$. Then 
\begin{equation}
H_{D}^{m}(q)\cong \ker (\Delta _{D}^{(m)}(q))=\ker (D^{(m)}(q))\cap
\ker (D^{\ast (m)}(q))  \label{Hodge q}
\end{equation}%
proves that each $H_{D}^{m}(q)$ is finite dimensional. If the torus $T$
is $1$-dimensional, i.e. if the orbits of $\xi $ are closed, then the
character of each irreducible $T$ representation is $e^{2\pi iax}$ for some $%
a\in \mathbb{Z}$, and the Lie derivative acts with charge $2\pi at$ for
some fixed $t\in \mathbb{R}$ and $a\in \mathbb{Z}$. If the torus $T$ is $b$%
-dimensional with $b>1$, the irreducible representations have the characters 
$e^{2\pi ia\cdot x}$ with $a\in \mathbb{Z}^{b}$, and this implies that the
action of $\mathcal{L}_{\xi }$ on such a representation subspace is one with
charge $q=2\pi a\cdot \tau $, where the fixed vector $\tau \in \mathbb{R}^{b}
$ generates a dense subgroup of $\mathbb{R}^{b}\slash\mathbb{Z}^{b}$. Thus
the set of possible charges $q$ such that $H_{D}^{(m)}(q)\neq 0$ is a
subset of $\{2\pi a\cdot \tau :a\in \mathbb{Z}^{m}\}$, which is discrete if $%
m=1$ and otherwise is a countable dense subset of $\mathbb{R}$. Note that in
all cases, the multiplicity of the $e^{2\pi ia\cdot x}$ representation (i.e.
charge $q=2\pi a\cdot \tau $) in $H_{D}^{(m)}(M)$ is finite, since the
kernel of $\Delta _{D}-\frac{1}{2}\mathcal{L}_{\xi }^{2}-\frac{1}{2}q^{2}$
is finite dimensional. Notice also that given any differential $m$-form $%
\alpha $, the projection of this form to the set of forms of charge $q=2\pi
a\cdot \tau $ is $\int_{T}h\cdot \alpha \,e^{-2\pi a\cdot h}\,dh$, where $dh$
is the Haar measure (i.e. Lebesgue measure) on the torus $T=\mathbb{R}^{m}%
\slash\mathbb{Z}^{m}$ and $h\cdot $ is the induced map of $h\in T$ on
differential forms. Thus, taking the infinite Hilbert direct sum of Equation %
\eqref{Hodge q} over $q$, we have 
\begin{equation}
H_{D}^{m}(M)\cong \ker (\Delta _{D}^{(m)})=\ker (D^{(m)})\cap \ker
(D^{\ast (m)}).  \label{Hodge theorem}
\end{equation}%
Note that though each charge component of the above is finite dimensional,
the direct sum may give an infinite number of nonzero components, a priori. Note also that since $\Delta _{\partial }$ and $\Delta _{\overline{\partial }%
}$ preserve $\Omega ^{m}$, so in that case $H_{\overline{\partial }%
}^{m}(M)$ and $H_{\partial }^{m}(M)$ may be subdivided into forms of
type $(r,s,t)$. 
\end{proof}

\section{Relation to Kohn-Rossi cohomology and basic Dolbeault cohomology} \label{Kohn Rossi section}

We now look at how the $\overline\partial$-cohomology can be related to the Kohn-Rossi cohomology \cite%
{KohnRossi1965ExtHolomFcnsToBndryCpxMfd}, \cite{Kohn1964BdriesCpxMflds} which utilizes the differential $\overline{\partial }$
restricted to horizontal forms (called tangential forms in CR manifold
terminology). In this case, as $\xi\lrcorner$ anticommutes with $\overline{\partial }$, the operator $\overline{\partial }$ maps the space of horizontal forms to itself and we have $\overline{\partial }:\Omega
^{r,s,0}\rightarrow \Omega ^{r,s+1,0}$. The same is true for $\partial$. Now, as $d=\partial+\overline\partial+\eta\wedge\mathcal{L}_\xi$, the projection of $d$ to the space of horizontal forms allows to write  ${\rm Proj}\circ d=\partial+\overline\partial$. Therefore, we deduce that $\partial=\partial_{KR}$ and $\overline\partial=\overline\partial_{KR}$, where $\partial_{KR}$ and $\overline\partial_{KR}$ are the Kohn-Rossi differentials. In the same way, we can show that $\partial^\ast=\partial^\ast_{KR}$ and $\overline\partial^\ast=\overline\partial^\ast_{KR}$. As a consequence, the $\overline\partial$-harmonic forms satisfy
\begin{eqnarray*}
    \mathcal{H}_{\overline{\partial}}^{r,s,0}&=&
\mathcal{H}_{\overline{\partial}_{KR}}^{r,s},\\
\mathcal{H}_{\overline{\partial}}^{r,s,1}&=&
\eta\wedge\mathcal{H}_{\overline{\partial}_{KR}}^{r,s}.
\end{eqnarray*}
The same equations hold when restricting to forms of charge $q$, in which case each space is 
finite dimensional. We point out that the operators 
$\Delta_{\overline\partial_C}$ and $\Delta_{\partial_C}$ do not restrict to operators on horizontal forms, because the adjoint $\partial_C^*$ does not anticommute (or commute) with $\xi\lrcorner$ as it is shown in Proposition \ref{del bar C commutator formulas}. 

Using reasoning similar to that in Section \ref{Spectral theory section},
the Kohn-Rossi cohomology $H_{\overline{\partial}_{KR}}^{r,s}\left( M\right) 
$ is the Hilbert sum of the countable set of nontrivial finite dimensional
eigenspaces $H_{\overline{\partial}_{KR}}^{r,s}\left( q\right) $ of $%
\mathcal{L}_{\xi }$ corresponding to eigenvalue $iq $, satisfying%
\begin{equation*}
H_{\overline{\partial}_{KR}}^{r,s}\left( M\right) 
=\bigoplus\nolimits_{q}H_{\overline{\partial}_{KR}}^{r,s}\left( q\right)= \bigoplus\nolimits_{q}H_{\overline{\partial}}^{r,s,0}\left( q\right).
\end{equation*}%
Then, as is the case with ordinary de Rham-Hodge theory, each Kohn-Rossi cohomology class contains a smooth $\Delta _{\overline{%
\partial }}$-harmonic form that minimizes the $L^{2}$-norm within the class.

In the following, we relate the $\partial$-cohomology $\mathcal{H}^\bullet_{\overline\partial}(0)$ to the basic Dolbeault cohomology $\mathcal{H}^\bullet_{\overline\partial}(\mathcal{F})$ associated with the transverse K\"ahler foliation. For this, let $\omega$ be any basic differential form on the foliation $\mathcal{F}$. Since $\xi\lrcorner \omega=0$ and $\mathcal{L}_\xi \omega=0$, we have that $d^{1,1,-1}\omega=0$ and $d^{0,0,1}\omega=0$. Hence, we have that $d\omega=\partial\omega+\overline\partial \omega$. As the foliation is transversally K\"ahlerian, we can write that $d\omega=\partial_b\omega+\overline\partial_b\omega$. Here $\partial_{b}$ and $\overline\partial_{b}$ are the Dolbeault operators on the transverse K\"ahler foliation of the Sasakian manifold. Since both $\partial\omega$ and $\overline\partial\omega$ are basic forms,  we deduce by projecting to $\Omega^{r,s,0}$  that $$\partial\omega=\partial_b\omega,\,\, \overline\partial \omega=\overline\partial_b\omega.$$ In the same way, from \cite{ParkRichardson1996_BasicLaplacian}, we have that $\delta\omega=\delta_b\omega+2\eta\wedge\Lambda\omega$. Again by the fact that $(d^{0,0,1})^\ast\omega=0$ and  $(d^{1,1,-1})^\ast\omega=2\eta\wedge\Lambda\omega$, we deduce that $\delta_b\omega=\partial^\ast\omega+\overline\partial^\ast\omega$. On the other hand,  we have the decomposition $\delta_b=\partial_b^\ast+\overline\partial_b^\ast$ coming from the fact that the foliation is transversally K\"ahlerian. Again by projecting, we deduce that
$$\partial^\ast\omega=\partial_b^\ast\omega,\,\, \overline\partial^\ast \omega=\overline\partial_b^\ast\omega.$$
This implies that the basic Dolbeault cohomology $\mathcal{H}^\bullet_{\overline\partial}(\mathcal{F})$ is a subspace of $\mathcal{H}^\bullet_{\overline\partial}(0)$. Conversely, any form $\omega\in \mathcal{H}^\bullet_{\overline\partial}(0)$ can be decomposed into a horizontal and vertical part as $\omega=\omega_0+\eta\wedge(\xi\lrcorner\omega)$. Since $\mathcal{L}_\xi\omega=0$, this decomposition allows to get that both $\omega_0$ and $\xi\lrcorner\omega$ are basic differential forms in $\mathcal{H}^\bullet_{\overline\partial}(\mathcal{F})$. Hence, we find that 
\begin{equation}\label{eq:decompocohom}
\mathcal{H}^\bullet_{\overline\partial}(0)=\mathcal{H}^\bullet_{\overline\partial}(\mathcal{F})+\eta\wedge \mathcal{H}^\bullet_{\overline\partial}(\mathcal{F}).
\end{equation}

\section{Lefschetz decomposition and its consequences}
\label{Lefschetz conseq section}
In this section, we state the Lefschetz decomposition and study some of its consequences. Recall that a differential form $\omega $ is called \textbf{primitive} if $\Lambda \omega
=0$. As the operators $L,\Lambda$ and $H$ define an $\mathfrak{sl}_2$-representation on the space of all differential forms, we have
\begin{proposition}\label{Lefschetz proposition}
(Lefschetz decomposition on a Sasakian manifold) Any differential form $%
\omega $ of degree $m$ on a Sasakian manifold $(M^{2n+1},g,\xi )$ can be
decomposed as 
\begin{equation*}
\omega =\sum_{0\leq b\leq \left\lfloor \frac{m}{2} \right\rfloor
}L^{b}\omega_P^{m-2b},
\end{equation*}%
where each $\omega_P^{m-2b}$ is primitive  and of degree 
$m-2b$. 
\end{proposition}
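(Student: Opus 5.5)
The plan is to reduce the statement to the standard finite-dimensional $\mathfrak{sl}_2$-decomposition, applied pointwise. The operators $L$, $\Lambda$, $H$ are all $C^\infty(M)$-linear (zeroth order), so they act fibrewise on $\Lambda^\bullet T^*_xM\otimes\mathbb{C}$ at each point $x$. By \eqref{eq:comm lambda L} and the relations $[H,L]=-2L$, $[H,\Lambda]=2\Lambda$, the triple $(L,\Lambda,H)$ spans a representation of $\mathfrak{sl}_2(\mathbb{C})$ on each finite-dimensional fibre, with $-H$ playing the role of the usual weight operator. The representation is completely reducible, so each fibre decomposes into irreducibles. Every vector of weight $w$ is then a sum $\sum_b L^b v_b$, where each $v_b$ is a lowest-weight vector, i.e. $\Lambda v_b=0$.

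To keep track of degrees, I would split each fibre as $\Lambda^\bullet D^*_x\oplus \eta\wedge\Lambda^\bullet D^*_x$. $L$ and $\Lambda$ commute with $\eta\wedge$ and $\xi\lrcorner$, so both summands are invariant. On each summand $H$ acts by a scalar in a fixed degree: on horizontal $k$-forms $H=n-k$, and on $\eta\wedge(\text{horizontal }k\text{-forms})$ it again equals $n-k$. The degree of $v_b$ is therefore forced to be $m-2b$, and since degrees are nonnegative, $0\le b\le\lfloor m/2\rfloor$. Equivalently, I can reduce to the classical linear-algebra Lefschetz decomposition on the Hermitian vector space $(\ker\eta_x,\phi,\frac12 d\eta)$ and then multiply by $\eta$ for the vertical part.

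The remaining issue, which I expect to be the only real obstacle, is smoothness. A pointwise decomposition does not by itself give smooth primitive forms $\omega_P^{m-2b}$. I would handle this with the explicit universal formula from $\mathfrak{sl}_2$ theory: the projector onto the $L^b(\text{primitive})$ component is a polynomial in $L$ and $\Lambda$ whose coefficients depend only on $n$, $m$ and the $H$-eigenvalue. Because $H$ is constant on each of the two summands in a given degree, these coefficients are constants. Concretely, one defines by downward induction on $b$
\[
\omega_P^{m-2b}=c_{b}\,\Lambda^{b}\Big(\omega-\sum_{b'>b}L^{b'}\omega_P^{m-2b'}\Big),
\]
with constants $c_b$ determined by the identity $\Lambda^bL^b v=\big(b!\prod_{j=0}^{b-1}(k-j)\big)v$ for primitive $v$ of $H$-eigenvalue $k$. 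Each step then produces smooth forms, since everything is built from smooth bundle maps applied to $\omega$. Primitivity of the output, and the fact that the pieces recombine to $\omega$, follow from the pointwise decomposition already established.
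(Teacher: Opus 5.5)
Your proposal is correct and takes the same route as the paper, which simply invokes the $\mathfrak{sl}_2$-representation spanned by $L$, $\Lambda$, $H$. Your pointwise reduction, the degree bookkeeping via $H=(n-\deg)+\eta\wedge\xi\lrcorner$, and the smoothness argument fill in details the paper leaves implicit.

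Two points in the recursion need tidying. First, $c_b$ depends on the $H$-eigenvalue of $\omega_P^{m-2b}$, and this eigenvalue differs by one between horizontal and vertical forms of the same degree. So you should run the recursion separately on $\xi\lrcorner(\eta\wedge\omega)$ and on $\eta\wedge(\xi\lrcorner\omega)$. Second, when $\prod_{j=0}^{b-1}(k-j)=0$, the term $L^b\omega_P^{m-2b}$ vanishes anyway, so you can simply set $\omega_P^{m-2b}=0$.
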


Now, we state the following result which will be useful later to get eigenvalue estimates. 
\begin{lemma} \label{commutatorpower}
On the $\left( 2n+1\right) $-dimensional
Sasakian manifold with Lefschetz operator $L$ and adjoint $\Lambda $ and operator $H$, we have for all positive integers $b$, 
\begin{align*}
[L^{b},H] &= 2bL^{b}, \\
[L^{b},\Lambda] &= (-bH-b(b-1))L^{b-1}.
\end{align*}
As a consequence, the map $L^b:\Omega^m\to \Omega^{m+2}$ is injective for $m\leq n-b$ and $\Lambda^b:\Omega^m\to \Omega^{m-2}$, for $m\geq n+2b$.
\end{lemma}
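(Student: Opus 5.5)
The commutator identities will be proved by induction on $b$, using only $[H,L]=-2L$ and $[\Lambda,L]=H$ from \eqref{eq:comm lambda L}.

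\emph{First identity.} For $b=1$ we have $[L,H]=-[H,L]=2L$. Assuming $[L^{b},H]=2bL^{b}$, we write
\[
[L^{b+1},H]=L[L^b,H]+[L,H]L^b=2bL^{b+1}+2L^{b+1}=2(b+1)L^{b+1}.
\]

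\emph{Second identity.} For $b=1$ we have $[L,\Lambda]=-H$, which matches $(-H-0)L^0$. For the inductive step,
\[
[L^{b+1},\Lambda]=L[L^b,\Lambda]+[L,\Lambda]L^b=L(-bH-b(b-1))L^{b-1}-HL^b .
\]
By the first identity, $LH=HL+2L$, so $LHL^{b-1}=HL^b+2L^b$. Substituting gives
\[
[L^{b+1},\Lambda]=\bigl(-bH-2b-b(b-1)-H\bigr)L^b=\bigl(-(b+1)H-(b+1)b\bigr)L^b,
\]
which is the claimed formula at $b+1$.

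\emph{Injectivity of $L^b$.} I will use the Lefschetz decomposition of Proposition \ref{Lefschetz proposition}, but it is cleaner to work directly with the $\mathfrak{sl}_2$-structure. Note that $H=(n-\deg)+\eta\wedge\xi\lrcorner$ acts on $\Omega^{r,s,t}$, $m=r+s+t$, by the scalar $n-m+t$. Since $L$, $\Lambda$ and $H$ all preserve $t$, everything splits by $t\in\{0,1\}$, and on each piece $H$ equals $n-m'$ with $m'=r+s$ the horizontal degree. This is exactly the Kähler-type $\mathfrak{sl}_2$ on $\Lambda^\bullet D^*$ with $\dim_{\mathbb C}=n$.

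Injectivity then follows by a positivity argument. Take $\omega\in\Omega^m$, fix $t$ and the component of horizontal degree $m'$, and suppose $L^b\omega=0$. It suffices to show $\|L^{j}\omega\|^2>0$ inductively for $j\le b$. Write
\[
\|L^{j}\omega\|^2=\langle \Lambda L^{j}\omega,L^{j-1}\omega\rangle ,
\]
and, using the second identity, $\Lambda L^{j}=L^{j}\Lambda+jHL^{j-1}+j(j-1)L^{j-1}$. Applying this pointwise on each primitive piece $L^{c}\omega_P$ of the Lefschetz decomposition, where $\Lambda\omega_P=0$, one gets
\[
\Lambda L^{j}\omega_P=j\,(n-m'_P-j+1)\,L^{j-1}\omega_P,
\]
with $m'_P$ the horizontal degree of $\omega_P$. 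The coefficient $n-m'_P-j+1$ is positive whenever $m'_P+j\le n$. Since $m'\le m\le n-b$, every relevant index $c+j\le b$ satisfies this. Distinct $c$-pieces lie in different $H$-eigenspaces after applying $L^b$, so they are orthogonal. Hence $\|L^{b}\omega\|^2$ is a positive combination of $\|\omega_P\|^2$, which gives injectivity.

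\emph{Injectivity of $\Lambda^b$.} The statement for $\Lambda^b$ with $m\ge n+2b$ should follow by the Hodge star, since $\ast L\ast^{-1}$ is essentially $\Lambda$ up to sign, exchanging degree $m$ with $2n+1-m$. Alternatively, the symmetric argument with the roles of $L$ and $\Lambda$ swapped works, because $H=n-m'\le -2b$ there.

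The main obstacle is bookkeeping: the $\eta\wedge\xi\lrcorner$ term in $H$ means the positivity condition really involves the horizontal degree, so I must check that the stated ranges in $m$ suffice for both $t=0$ and $t=1$. The case $t=1$ has $m'=m-1$, which only helps for $L$. For $\Lambda$ it forces care: the $t=1$ part has $m'=m-1\ge n+2b-1$, which I must check is still enough, possibly using the exact bound $m'\ge n+b$ coming from $\Lambda^b$ lowering degree.
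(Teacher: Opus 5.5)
Your inductions for $[L^b,H]=2bL^b$ and $[L^b,\Lambda]=(-bH-b(b-1))L^{b-1}$ are correct. The paper states this lemma without proof, and this induction from $[H,L]=-2L$ and $[\Lambda,L]=H$ is the evident intended argument. Two further steps are also right: the reduction to the K\"ahler $\mathfrak{sl}_2$ on each $t$-piece with $H=n-m'$, and the formula $\Lambda L^j\omega_P=j(n-m'_P-j+1)L^{j-1}\omega_P$.

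The gap is the sentence ``Distinct $c$-pieces lie in different $H$-eigenspaces after applying $L^b$, so they are orthogonal.'' This is false. Every term $L^{b+c}\omega_P^{m'-2c}$ of $L^b\omega$ has degree $m'+2b$ and the same $t$, hence the same $H$-eigenvalue $n-m'-2b$. Yet this orthogonality is exactly what lets you pass from ``each piece has positive norm'' to ``$L^b\omega\neq 0$''. Uniqueness of the Lefschetz decomposition would serve too, but Proposition~\ref{Lefschetz proposition} only asserts existence. The orthogonality is true, for a different reason. For primitive $\alpha,\beta$ and $k>l$, repeated use of your formula gives $\langle L^k\alpha,L^l\beta\rangle=\langle\Lambda^lL^k\alpha,\beta\rangle=c\,\langle L^{k-l}\alpha,\beta\rangle=c\,\langle L^{k-l-1}\alpha,\Lambda\beta\rangle=0$ for some constant $c$. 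Equivalently, the pieces lie in distinct eigenspaces of the self-adjoint Casimir $L\Lambda+\Lambda L+\tfrac12H^2$, with eigenvalue $\mu+\tfrac12\mu^2$ for $\mu=n-m'_P\geq 0$.

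You can also bypass the decomposition entirely, which fits the ``as a consequence'' better. Work on a fixed $t$-piece of horizontal degree $m'$ with $m'+b\le n$, and suppose $L^b\omega=0$. Applying $\Lambda$ and your identity gives $L^b\Lambda\omega+b(n-m'-b+1)L^{b-1}\omega=0$. Applying $L$ kills the second term, so $L^{b+1}\Lambda\omega=0$. Since $(m'-2)+(b+1)\le n$, induction on $m'$ gives $\Lambda\omega=0$. Then $L^{b-1}\omega=0$ because $b(n-m'-b+1)\geq b>0$, and induction on $b$ gives $\omega=0$.

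The $\Lambda^b$ bookkeeping you left open closes at once. On horizontal degree $m'$, $\Lambda^b$ is injective if and only if $m'\geq n+b$. The $t=1$ piece has $m'=m-1\geq n+2b-1\geq n+b$ because $b\geq 1$. Equivalently, via $\ast L=\Lambda\ast$, your first part gives injectivity whenever $2n+1-m\le n-b$, i.e.\ $m\geq n+b+1$, which the stated (non-sharp) hypothesis $m\geq n+2b$ implies. Note that the targets in the statement should read $\Omega^{m+2b}$ and $\Omega^{m-2b}$.
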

 We establish some results relating the Lefschetz operators  $L$ and $\Lambda$ to
the Laplacian $\Delta _{\overline{\partial }}$.

\begin{lemma}\label{Lambda to the m
commutator with Delta}
For all positive integers $b$, we have $\Delta _{\overline{\partial }}L^{b}=L^{b}\left(
\Delta _{\overline{\partial }}+2bq\right) $ and $\Lambda ^{b}\Delta _{%
\overline{\partial }}=\left( \Delta _{\overline{\partial }}+2bq\right)
\Lambda ^{b}$ on $\Omega ^{\bullet}\left( q\right) $. As a consequence, the maps $L$ and $\Lambda $ on differential forms satisfy%
\begin{eqnarray*}
L^{b} &:&E_{\lambda }\left( \Delta _{\overline{\partial }}\left( q\right)
\right) \rightarrow E_{\lambda +2bq}\left( \Delta _{\overline{\partial }%
}\left( q\right) \right) , \\
\Lambda ^{b} &:&E_{\lambda }\left( \Delta _{\overline{\partial }}\left(
q\right) \right) \rightarrow E_{\lambda -2bq}\left( \Delta _{\overline{%
\partial }}\left( q\right) \right) ,
\end{eqnarray*}%
where $E_{\lambda }\left( \Delta _{\overline{\partial }}\left( q\right)
\right) $ denotes the $\lambda $-eigenspace  of $\Delta _{\overline{%
\partial }}$ on forms of charge $q$.  
\end{lemma}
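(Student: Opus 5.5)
The plan is to prove the case $b=1$ by computing the commutator $[\Delta_{\overline\partial},L]$ directly from the Kähler identities, obtain the $\Lambda$ statement by taking formal adjoints, and then induct on $b$.

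\emph{Step 1: the commutator with $L$.} We use $[L,\overline\partial]=0$ and, from Proposition~\ref{Kaehler Identity prop}, $\overline\partial^{\ast}L=L\overline\partial^{\ast}+i\partial$. Then
\begin{equation*}
\Delta_{\overline\partial}L=\overline\partial\,\overline\partial^{\ast}L+\overline\partial^{\ast}L\,\overline\partial
=\overline\partial\left(L\overline\partial^{\ast}+i\partial\right)+\left(L\overline\partial^{\ast}+i\partial\right)\overline\partial
=L\Delta_{\overline\partial}+i\left(\overline\partial\partial+\partial\overline\partial\right).
\end{equation*}
By \eqref{eq:partialoverpartial} this gives $[\Delta_{\overline\partial},L]=-2iL\mathcal{L}_\xi$ on all forms. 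Since $L$ commutes with $\mathcal{L}_\xi$, it preserves $\Omega^\bullet(q)$. On that space $\mathcal{L}_\xi=iq$, so $[\Delta_{\overline\partial},L]=2qL$, that is, $\Delta_{\overline\partial}L=L(\Delta_{\overline\partial}+2q)$.

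\emph{Step 2: the commutator with $\Lambda$.} This is the step where a sign error is easiest to make, so it is done on all forms before restricting to a fixed charge. We take formal adjoints of $\Delta_{\overline\partial}L-L\Delta_{\overline\partial}=-2iL\mathcal{L}_\xi$. Since $\Delta_{\overline\partial}$ is self-adjoint, the left side becomes $\Lambda\Delta_{\overline\partial}-\Delta_{\overline\partial}\Lambda$. For the right side, the scalar $-2i$ conjugates to $2i$, and $(L\mathcal{L}_\xi)^{\ast}=\mathcal{L}_\xi^{\ast}\Lambda=-\mathcal{L}_\xi\Lambda=-\Lambda\mathcal{L}_\xi$. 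Hence
\begin{equation*}
\Lambda\Delta_{\overline\partial}-\Delta_{\overline\partial}\Lambda=(2i)\left(-\Lambda\mathcal{L}_\xi\right)=-2i\Lambda\mathcal{L}_\xi.
\end{equation*}
Now $\Lambda$ also commutes with $\mathcal{L}_\xi$, so it preserves $\Omega^\bullet(q)$. Restricting there and substituting $\mathcal{L}_\xi=iq$ gives $\Lambda\Delta_{\overline\partial}=(\Delta_{\overline\partial}+2q)\Lambda$, which is the stated sign.

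\emph{Step 3: induction on $b$.} Suppose $\Delta_{\overline\partial}L^{b}=L^{b}(\Delta_{\overline\partial}+2bq)$ on $\Omega^\bullet(q)$. Because $L$ preserves charge, Step 1 applies after one more factor of $L$:
\begin{equation*}
\Delta_{\overline\partial}L^{b+1}=L^{b}(\Delta_{\overline\partial}+2bq)L=L^{b}\left(L(\Delta_{\overline\partial}+2q)+2bqL\right)=L^{b+1}\left(\Delta_{\overline\partial}+2(b+1)q\right).
\end{equation*}
The identity $\Lambda^{b}\Delta_{\overline\partial}=(\Delta_{\overline\partial}+2bq)\Lambda^{b}$ follows in the same way, applying Step 2 once per factor of $\Lambda$.

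\emph{Step 4: consequences for eigenspaces.} Let $\omega\in E_\lambda(\Delta_{\overline\partial}(q))$. Then $\Delta_{\overline\partial}L^{b}\omega=L^{b}(\lambda+2bq)\omega=(\lambda+2bq)L^{b}\omega$, and $L^{b}\omega$ still has charge $q$. This gives $L^{b}:E_\lambda\to E_{\lambda+2bq}$.

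For $\Lambda$, rewrite the identity of Step 3 as $\Delta_{\overline\partial}\Lambda^{b}=\Lambda^{b}\Delta_{\overline\partial}-2bq\Lambda^{b}$. Applied to $\omega$ this gives $\Delta_{\overline\partial}\Lambda^{b}\omega=(\lambda-2bq)\Lambda^{b}\omega$. Since $\Lambda^{b}\omega$ also has charge $q$, we get $\Lambda^{b}:E_\lambda\to E_{\lambda-2bq}$, as claimed.
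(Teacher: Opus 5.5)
Your proposal is correct and follows the paper's proof: you compute $\Delta_{\overline\partial}L=L\Delta_{\overline\partial}+i(\overline\partial\partial+\partial\overline\partial)=L\Delta_{\overline\partial}-2iL\mathcal{L}_\xi$ from $[\overline\partial^{\ast},L]=i\partial$ and $[L,\overline\partial]=0$, induct on $b$, and obtain the $\Lambda$ identity by taking adjoints. Your explicit check of the adjoint step on all forms, using $\mathcal{L}_\xi^{\ast}=-\mathcal{L}_\xi$, before restricting to charge $q$ confirms the sign that the paper leaves implicit.
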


\begin{proof}
Observe that as operators, we have
\begin{eqnarray*}
\Delta _{\overline{\partial }}L &=&\overline{\partial }\overline{\partial }%
^{\ast }L+\overline{\partial }^{\ast }\overline{\partial }L=\overline{%
\partial }[\overline{\partial }^{\ast },L]+\overline{\partial }L\overline{%
\partial }^{\ast }+\overline{\partial }^{\ast }L\overline{\partial } \\
&=&i\overline{\partial }\partial +L\overline{\partial }\overline{\partial }%
^{\ast }+[\overline{\partial }^{\ast },L]\overline{\partial }+L\overline{%
\partial }^{\ast }\overline{\partial }=i(\overline{\partial }\partial
+\partial \overline{\partial })+L\Delta _{\overline{\partial }} \\
&=&-2i\mathcal{L}_{\xi }L+L\Delta _{\overline{\partial }}=L(\Delta _{%
\overline{\partial }}+2q).
\end{eqnarray*}%
Then%
\begin{equation*}
\Delta _{\overline{\partial }}L^{b}=L(\Delta _{\overline{\partial }%
}+2q)L^{b-1}=L^{2}(\Delta _{\overline{\partial }}+4q)L^{b-2}=...=L^{b}(%
\Delta _{\overline{\partial }}+2bq)
\end{equation*}%
on $\Omega ^{\bullet}\left( q\right) $. The other equation follows by
taking adjoints. The last part can be easily deduced.
\end{proof}

\begin{corollary}
The operator $\Delta _{\overline{\partial }}$ maps primitive forms to
primitive forms, so that $\mathcal{H}_{\overline{\partial },P}^{\bullet}=\ker
\left( \Delta _{\overline{\partial }}:\Omega _{P}^{\bullet}\rightarrow \Omega
_{P}^{\bullet}\right) $ and $\mathcal{H}_{\overline{\partial }%
,P}^{\bullet}\left( q\right) =\ker \left( \Delta _{\overline{\partial }%
}:\Omega _{P}^{\bullet}\left( q\right) \rightarrow \Omega _{P}^{\bullet}\left(
q\right) \right) $.
\end{corollary}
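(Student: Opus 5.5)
The plan is to derive the corollary directly from the commutation relations for $\Lambda$ with $\overline{\partial}$ and $\overline{\partial}^\ast$, working charge by charge on $\Omega^\bullet(q)$. The claim that $\Delta_{\overline\partial}$ maps primitive forms to primitive forms amounts to $\Lambda\Delta_{\overline\partial}\omega=0$ whenever $\Lambda\omega=0$. Lemma~\ref{Lambda to the m commutator with Delta} with $b=1$ gives $\Lambda\Delta_{\overline{\partial}}=(\Delta_{\overline{\partial}}+2q)\Lambda$ on $\Omega^\bullet(q)$. For a primitive $\omega$ of charge $q$ this yields $\Lambda\Delta_{\overline\partial}\omega=(\Delta_{\overline\partial}+2q)\Lambda\omega=0$.

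To pass to general forms, we use that $\mathcal{L}_\xi$ commutes with $\Lambda$ and with $\Delta_{\overline\partial}$ (by \eqref{Lie deriv commutes with all} and its adjoint). Decompose an arbitrary smooth primitive form into charge components $\omega=\sum_q\omega(q)$ via the torus projections described in the proof of Theorem~\ref{rst Hodge Theorem}. Since $\Lambda$ commutes with these projections, each $\omega(q)$ is again primitive, and the previous step applies to it. Summing over $q$ (in $L^2$, using that $\Lambda$ is a bounded zeroth-order operator) gives $\Lambda\Delta_{\overline\partial}\omega=0$.

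As a cleaner alternative that avoids any charge decomposition, one can verify the operator identity $[\Lambda,\Delta_{\overline\partial}]=2i\mathcal{L}_\xi\Lambda$ directly. From Proposition~\ref{Kaehler Identity prop} we have $[\Lambda,\overline\partial]=-i\partial^\ast$ and $[\Lambda,\overline\partial^\ast]=0$. Hence
\[
[\Lambda,\Delta_{\overline\partial}]=-i(\partial^\ast\overline\partial^\ast+\overline\partial^\ast\partial^\ast),
\]
and the adjoint of \eqref{eq:partialoverpartial} shows that this equals $-i(-2L\mathcal{L}_\xi)^\ast=2i\mathcal{L}_\xi\Lambda$, which vanishes on primitive forms. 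We will present this version, since it is a two-line computation.

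With the preservation property established, the statements about harmonic spaces follow. The space $\mathcal{H}_{\overline\partial,P}^\bullet$, defined as the $\Delta_{\overline\partial}$-harmonic forms that are primitive, i.e.\ $\ker\Delta_{\overline\partial}\cap\Omega_P^\bullet$, is precisely the kernel of the restricted operator $\Delta_{\overline\partial}\colon\Omega_P^\bullet\to\Omega_P^\bullet$, which is well defined by the first part. Intersecting further with the charge-$q$ spaces, which are preserved because $\mathcal{L}_\xi$ commutes with both $\Lambda$ and $\Delta_{\overline\partial}$, gives the statement for $\mathcal{H}^\bullet_{\overline\partial,P}(q)$.

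There is no real obstacle. The only point requiring care is the sign and adjoint bookkeeping in $(L\mathcal{L}_\xi)^\ast=-\mathcal{L}_\xi\Lambda$, which uses $\mathcal{L}_\xi^\ast=-\mathcal{L}_\xi$ together with $[\mathcal{L}_\xi,\Lambda]=0$.
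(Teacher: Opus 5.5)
Your argument is correct and is essentially the paper's: the corollary is read off from Lemma~\ref{Lambda to the m commutator with Delta} with $b=1$, and that lemma's proof (via $[\overline\partial^\ast,L]=i\partial$ and $\{\partial,\overline\partial\}=-2L\mathcal{L}_\xi$) is exactly the adjoint of your computation. One small sign slip: $-i(-2L\mathcal{L}_\xi)^\ast=-i\cdot 2\mathcal{L}_\xi\Lambda=-2i\mathcal{L}_\xi\Lambda$, not $+2i\mathcal{L}_\xi\Lambda$. On charge $q$ this gives the paper's $2q\Lambda$, and since the commutator is a multiple of $\Lambda$ on the right with either sign, your conclusion is unaffected.
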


We now use Lemma \ref{Lambda to the m commutator with Delta} to show Theorem \ref{Lefschetz theorem}. We have
\begin{proof}[Proof of Theorem \ref{Lefschetz theorem}]
We consider the Lefschetz decomposition from Proposition \ref{Lefschetz proposition} of a differential form $\omega$ of degree $m$: 
\begin{equation*}
\omega =\sum_{0\leq b\leq \left\lfloor \frac{m}{2} \right\rfloor
}L^{b}\omega_P^{m-2b},
\end{equation*}%
where each $\omega_P^{m-2b}$ is primitive and of degree 
$m-2b$. As $\Lambda$ commutes with $\mathcal{L}_\xi$, we deduce that if $\omega$ is of charge $q$, then is each form $\omega_P^{m-2b}$.
From the lemma above, observe that%
\begin{equation*}
\Delta _{\overline{\partial }}\omega=\sum_{0\leq b\leq \left\lfloor \frac{m}{2} \right\rfloor
}L^{r}\left( \Delta _{\overline{\partial }}+2bq\right) \omega_P^{m-2b},
\end{equation*}%
which gives the Lefschetz decomposition since 
\begin{equation*}
    \Lambda \left( \Delta _{%
\overline{\partial }}+2bq\right) \omega_P^{m-2b}
=\left( \Delta _{%
\overline{\partial }}+2\left( b+1\right) q\right) \Lambda \omega_P^{m-2b}=0.
\end{equation*}
Then $\Delta _{\overline{\partial }}\omega=0$
if and only if $\left( \Delta _{\overline{\partial }}+2bq\right) \omega_P^{m-2b}=0$ for all $b$, yielding $\omega_P^{m-2b}=0$ for $q>0$ and $b>0$. Therefore any element in the cohomology group $\mathcal{H}_{\overline\partial}^\bullet(q)$, for $q>0$, must be primitive.
\end{proof}




We may also consider the dual Lefschetz decomposition, that is, taking advantage of anti-primitive differential form, those forms belonging to the kernel of $L$. For this we utilize the Hodge star operator $*$. Let $\omega$ be any differential form of degree $m$. Then $*\omega$ can be decomposed according to the Lefschetz decomposition:
\[
*\omega=\sum_{0\leq b\leq \left\lfloor \frac{2n+1-m}{2} \right\rfloor }
L^b \omega_P^{2n+1-m-2b},
\]
where each $\omega_P^{\bullet}$ is primitive. Since $*^2=1$ and $*L=\Lambda *$ on differential forms on $M$, we apply $*$ to get
\begin{eqnarray*}
\omega &=&
\sum_{0\leq b\leq \left\lfloor \frac{2n+1-m}{2} \right\rfloor }
\Lambda^b *\omega_{P}^{2n+1-m-2b}\\
&=&\alpha_{P^*}^{m}+\sum_{0\leq b\leq \left\lfloor \frac{2n+1-m}{2} \right\rfloor }
\Lambda^b \alpha_{P^*}^{m+2b},
\end{eqnarray*}
where we have set $\alpha_{P^*}^\bullet=*\omega_P^{2n+1-\bullet}$. The script $P^*$ means that the form is antiprimitive. The following proposition is the dual Lefschetz version of Theorem~\ref{Lefschetz theorem}, which is proved in a similar way. Here,  
$\Delta _{\overline{\partial },P^*}$ is 
the operator $\Delta _{\overline{\partial }}$ 
restricted to antiprimitive forms and 
$\mathcal{H}_{%
\overline{\partial },P^*}^\bullet$
is the intersection of $\overline{\partial }$-harmonic forms with the antiprimitive forms.

\begin{proposition}\label{anti Lefschetz proposition}
For any $q\in\mathbb{R}$,
\begin{equation*}
\mathcal{H}_{\overline{\partial }}^{m}\left( q\right) =\mathcal{H}_{%
\overline{\partial },P^*}^{m}\left( q\right) \oplus
\bigoplus\nolimits_{0\leq b\leq \left\lfloor \frac{2n+1-m}{2} \right\rfloor 
}\Lambda^{b}E_{2bq}\left( \Delta _{\overline{\partial },P^*}\left(
q\right) \right) ,
\end{equation*}%
where $E_{\lambda }\left( \Delta _{\overline{\partial },P^*}(q)\right)$ denotes the eigenspace of $\Delta_{\overline\partial}$ restricted to antiprimitive forms corresponding to eigenvalue $%
\lambda $. In particular, for $q<0$, $\mathcal{H}_{\overline{\partial }%
}^{m}\left( q\right) =\mathcal{H}_{\overline{\partial },P^*}^{m}\left(
q\right) $.
\end{proposition}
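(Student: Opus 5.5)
The plan is to mirror the proof of Theorem \ref{Lefschetz theorem}, with the roles of $L$ and $\Lambda$ exchanged via the Hodge star. Start from the dual Lefschetz decomposition derived just above: any $\omega\in\Omega^m(q)$ can be written uniquely as
$$\omega=\sum_{b\ge 0}\Lambda^b\alpha_{P^*}^{m+2b},$$
with each $\alpha_{P^*}^{m+2b}$ antiprimitive, i.e. $L\alpha_{P^*}^{m+2b}=0$. Uniqueness of this decomposition follows from uniqueness in Proposition \ref{Lefschetz proposition} after applying $*$. Since $*$ is an isometry and $[\mathcal{L}_\xi,*]=0$ ($\xi$ is Killing and preserves orientation), each component again has charge $q$.

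Next, apply Lemma \ref{Lambda to the m commutator with Delta} in its $\Lambda$-form, $\Delta_{\overline\partial}\Lambda^b=\Lambda^b(\Delta_{\overline\partial}-2bq)$ on $\Omega^\bullet(q)$. This is obtained from $\Lambda^b\Delta_{\overline\partial}=(\Delta_{\overline\partial}+2bq)\Lambda^b$ by moving the scalar term across. It gives
$$\Delta_{\overline\partial}\omega=\sum_b\Lambda^b(\Delta_{\overline\partial}-2bq)\alpha_{P^*}^{m+2b}.$$
I also need $\Delta_{\overline\partial}$ to preserve antiprimitive forms. The $L$-form of the lemma, $L\Delta_{\overline\partial}=(\Delta_{\overline\partial}-2q)L$, gives $L(\Delta_{\overline\partial}-2bq)\alpha=(\Delta_{\overline\partial}-2(b+1)q)L\alpha=0$. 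Hence the displayed sum is itself the dual Lefschetz decomposition of $\Delta_{\overline\partial}\omega$.

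By uniqueness, $\Delta_{\overline\partial}\omega=0$ if and only if $\Delta_{\overline\partial}\alpha_{P^*}^{m+2b}=2bq\,\alpha_{P^*}^{m+2b}$ for every $b$. That is, $\alpha_{P^*}^{m}$ is $\overline\partial$-harmonic antiprimitive, and for $b\ge1$ each $\alpha_{P^*}^{m+2b}$ lies in $E_{2bq}(\Delta_{\overline\partial,P^*}(q))$. This yields the stated decomposition, with the $b=0$ term giving $\mathcal{H}^m_{\overline\partial,P^*}(q)$. The sum is direct because the dual decomposition is unique, and the relevant $\Lambda^b$ are injective on these pieces by Lemma \ref{commutatorpower} transported through $*$. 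For $q<0$ the eigenvalue $2bq<0$ cannot occur for the nonnegative operator $\Delta_{\overline\partial}$ when $b\ge1$. So only the antiprimitive harmonic part survives.

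The main obstacle I expect is bookkeeping of signs. I must verify the direction of the eigenvalue shift, so that $\Lambda^b$ maps $E_{2bq}$ to the kernel rather than $E_{-2bq}$, and confirm the identity $*L=\Lambda*$ used for the dual decomposition. Since $*$ swaps $\overline\partial^*$ and $\partial$ by \eqref{adj of del in terms of *}, I will not transport $\Delta_{\overline\partial}$ through $*$. Instead I argue directly with the commutation relations of Lemma \ref{Lambda to the m commutator with Delta} as above, and use $*$ only to get existence and uniqueness of the dual decomposition.
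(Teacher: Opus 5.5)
Your proposal is correct and is exactly the dual of the paper's proof of Theorem~\ref{Lefschetz theorem}, which is all the paper offers for this proposition (``proved in a similar way''): you commute $\Delta_{\overline\partial}$ past $\Lambda^b$ using Lemma~\ref{Lambda to the m commutator with Delta}, check that $\Delta_{\overline\partial}$ preserves $\ker L$, read off $E_{2bq}$ by uniqueness, and use nonnegativity of $\Delta_{\overline\partial}$ to kill all $b\geq 1$ terms when $q<0$. One point needs tightening, and the paper glosses over it as well: only the components $\Lambda^b\alpha_{P^*}^{m+2b}$ are unique, not the antiprimitive pieces themselves (e.g.\ $\Lambda$ annihilates every vertical antiprimitive $(n+1)$-form, which occurs for $m=n-1$, $b=1$), and Lemma~\ref{commutatorpower} does not give the injectivity you cite in all such cases; the fix is to note that on antiprimitive forms of fixed degree $k$ and fixed $t$ one has $H=n-k+t$, so $\Lambda^b$ is either injective or zero there, and since $\Delta_{\overline\partial}$ preserves the $t$-grading you may discard the $t$-parts that $\Lambda^b$ annihilates before invoking uniqueness.
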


\section{Eigenvalue estimates}
\label{eigenvalue estimates section}


In this section, we aim to get several estimates for the eigenvalues of the Hodge Laplacian using Equation \eqref{eq:laplaceDel}. These estimates depend on the charge $q$ when the cohomology $\mathcal{H}^\bullet_{\overline\partial}(q)$ is assumed to be nonzero. Before proving these estimates, we first provide vanishing results for these cohomologies similar to the ones in \cite{Tasker} and \cite{Tanaka1975DiffGeomStudy} with slightly different proofs and assumptions.

For this, we define the conjugate of the Hodge star operator $\ast$ as follows: 
\begin{equation*}
\overline\ast: \Omega^{r,s,t}\to \Omega^{n-r,n-s,1-t}:\,\,\,
\overline\ast\omega=\ast\overline\omega=\overline{*\omega}.
\end{equation*}

\begin{lemma}\label{hodgestar}
On a Sasakian manifold $(M^{2n+1},g,\xi)$, the operator $\overline{*}$ commutes
with $\Delta _{\overline{\partial}}$. Therefore, $%
\mathcal{H}^{r,s,t}_{\overline\partial}(q)$ and $\mathcal{H}%
^{n-r,n-s,1-t}_{\overline\partial}(-q)$ are isomorphic.
\end{lemma}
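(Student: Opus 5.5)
The plan is to prove the commutation by showing $\overline{\ast}\,\overline{\partial}=\pm\overline{\partial}^{\ast}\overline{\ast}$ and $\overline{\ast}\,\overline{\partial}^{\ast}=\pm\overline{\partial}\,\overline{\ast}$ with compatible signs, and then read off $\overline{\ast}\Delta_{\overline{\partial}}=\Delta_{\overline{\partial}}\overline{\ast}$.

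The starting point is \eqref{adj of del in terms of *}, namely $\overline{\partial}^{\ast}=\ast\partial\ast(-1)^{\deg}$. Since $d$ and $\phi$ are real, $\overline{\partial\omega}=\overline{\partial}\,\overline{\omega}$. Hence for $\omega$ of degree $m$,
\[
\overline{\ast}\,\overline{\partial}\omega=\ast\,\overline{\overline{\partial}\omega}=\ast\partial\overline{\omega}.
\]
Now $\ast^2=1$ on the odd-dimensional manifold $M$, so $\ast\partial\ast(-1)^{\deg}=\overline{\partial}^{\ast}$ can be rewritten as $\ast\partial=(-1)^{2n+1-m}\,\overline{\partial}^{\ast}\ast$ on $(2n+1-m)$-forms; it is applied to $\ast$ of an $m$-form. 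Consequently $\overline{\ast}\,\overline{\partial}=\epsilon_m\,\overline{\partial}^{\ast}\,\overline{\ast}$ on $m$-forms, where $\epsilon_m=\pm1$ is a sign depending only on $m$. Taking adjoints, and using that $\overline{\ast}$ is an isometry with $\overline{\ast}^{\,2}=1$ (so it is its own adjoint up to conjugate-linearity), gives $\overline{\ast}\,\overline{\partial}^{\ast}=\epsilon'\,\overline{\partial}\,\overline{\ast}$ with another degree-dependent sign.

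Next I compose. On $m$-forms, $\overline{\ast}\,\overline{\partial}\,\overline{\partial}^{\ast}$ picks up $\epsilon_{m-1}\epsilon'_m$, and $\overline{\ast}\,\overline{\partial}^{\ast}\overline{\partial}$ picks up $\epsilon'_{m+1}\epsilon_m$. The key check is that both products equal $+1$. Since each sign is a power of $-1$ linear in the degree, the two applications in each product contribute $(-1)^{m}(-1)^{m\pm1}$-type factors that pair up. I expect this sign bookkeeping to be the only delicate step. If the direct sign tracking gets messy, I will instead derive $\overline{\ast}\,\overline{\partial}^{\ast}$ from the first identity by substituting $\overline{\ast}\omega$ and using $\overline{\ast}^{\,2}=1$, which makes the second sign the inverse of the first in shifted degree, so that the products cancel automatically. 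This yields $\overline{\ast}\Delta_{\overline{\partial}}=\Delta_{\overline{\partial}}\overline{\ast}$.

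For the isomorphism, $\overline{\ast}$ maps $\Omega^{r,s,t}$ to $\Omega^{n-r,n-s,1-t}$: the type swap $r\leftrightarrow s$ from $\ast$ is undone by conjugation. Since $\mathcal{L}_\xi$ is real and commutes with $\ast$ ($\xi$ is Killing and preserves orientation), $\mathcal{L}_\xi\omega=iq\omega$ implies $\mathcal{L}_\xi\overline{\ast}\omega=\overline{\ast}\,\overline{iq\omega}=-iq\,\overline{\ast}\omega$. So $\overline{\ast}$ carries charge $q$ to charge $-q$. Being an involutive conjugate-linear bijection commuting with $\Delta_{\overline{\partial}}$, it maps $\mathcal{H}^{r,s,t}_{\overline{\partial}}(q)$ bijectively onto $\mathcal{H}^{n-r,n-s,1-t}_{\overline{\partial}}(-q)$. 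This is an isomorphism of real vector spaces, and the dimensions agree as complex spaces.
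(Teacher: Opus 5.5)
Your proposal is correct and is essentially the paper's proof: both use only $\overline{\partial}^{\ast}=\ast\partial\ast(-1)^{\deg}$, $\ast^{2}=1$ and $\overline{\partial\omega}=\overline{\partial}\,\overline{\omega}$. The paper expands $\overline{\ast}\Delta_{\overline{\partial}}$ directly, while you first isolate the intertwining relations $\overline{\ast}\,\overline{\partial}=(-1)^{m+1}\overline{\partial}^{\ast}\overline{\ast}$ and $\overline{\ast}\,\overline{\partial}^{\ast}=(-1)^{m}\overline{\partial}\,\overline{\ast}$ on $m$-forms, which do make both composite signs $+1$; your observation that $\overline{\ast}$ is a conjugate-linear bijection carrying charge $q$ to $-q$ is a slightly more careful version of the paper's final step.
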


\begin{proof}
For $\omega \in \Omega ^{\bullet}$, we compute
\begin{eqnarray*}
\overline{\ast }\Delta _{\overline{\partial }}\omega  &=&\overline{\ast }(%
\overline{\partial }\,\,\overline{\partial }^{\ast }+\overline{\partial }%
^{\ast }\,\,\overline{\partial })\omega =\ast (\partial \,\,\partial ^{\ast
}+\partial ^{\ast }\,\,\partial )\overline{\omega } \\
&=&(-1)^{\deg (\omega )}\ast \partial \ast \overline{\partial }\ast 
\overline{\omega }+(-1)^{\deg (\omega )+1}\overline{\partial }\ast
\,\,\partial \overline{\omega } \\
&=&\overline{\partial }^{\ast }\overline{\partial }\ast \overline{\omega }%
+(-1)^{\deg (\omega )+1}\overline{\partial }\ast \,\,\partial \ast ^{2}%
\overline{\omega } \\
&=&\overline{\partial }^{\ast }\overline{\partial }\ast \overline{\omega }+%
\overline{\partial }\,\overline{\partial }^{\ast }\ast \overline{\omega }%
=\Delta _{\overline{\partial }}\ast \overline{\omega }=\Delta _{\overline{%
\partial }}\overline{\ast }\,\omega .
\end{eqnarray*}%
Finally, if $\omega $ is an eigenform of $\mathcal{L}_{\xi }$ which is
associated with the eigenvalue $iq$, then $\ast \overline{\omega }$ is an
eigenform which is associated with the eigenvalue $-iq$. This shows the
isomorphism and finishes the proof.
\end{proof}


\begin{notation}
In all what follows, we use $\langle\cdot,\cdot\rangle$ to indicate the
pointwise Hermitian inner product of sections of $TM\otimes\mathbb{C}$ or of
complex-valued differential forms. We use the notation $\langle\langle\cdot,\cdot\rangle\rangle=%
\int_M\langle\cdot,\cdot\rangle \mathrm{vol}_M$ for the $L^2$-inner product. We use $g(\cdot,\cdot)
$ to mean the pointwise inner product on $TM$ that is complex-linear in both
slots.
\end{notation}

Let us now recall the
Bochner formula: On a Riemannian manifold of dimension $\ell$, the Hodge Laplacian
acting on $m$-forms satisfies \cite[section 1.I]{Besse} 
\begin{equation}\label{bochner}
\Delta = \nabla^*\nabla+\mathcal{B}^{[m]}
\end{equation}
where $\mathcal{B}^{[m]}=\sum_{a,b=1}^{\ell} f_b^\ast\wedge f_a\lrcorner
R(f_a,f_b)$ with $\{f_a\}_{a=1,\ldots,\ell}$ an orthonormal frame of $TM$ and $%
R(X,Y)=[\nabla_X,\nabla_Y]-\nabla_{[X,Y]}$ is the Riemann curvature operator. Recall also the pointwise inequality \cite{GM75} 
\begin{equation}\label{eq:gallomeyerine}
|\nabla\omega|^2\geq \frac 1{m+1}|d\omega|^2+\frac1{\ell-m+1}|\delta \omega|^2,
\end{equation} 
which is valid for any differential form $\omega$. In the following proposition, we give a vanishing result for the  $\overline\partial$-cohomology that depend on some assumptions of the charge $q$ and the Ricci tensor of the manifold. This result covers the one in \cite{Tasker}.

\begin{proposition}\label{vanishing theorem} Let $(M^{2n+1},g,\xi)$ be a compact Sasakian manifold. Assume that ${\rm Ric}\geq 2n$. If $|q+n-r|<\sqrt{n^2+2r}$, then $\mathcal{H}_{\overline\partial}^{r,0,t}(q)=0$, for all $r\in \{0,\ldots,n\}$ and $t\in \{0,1\}$. As a consequence, if $|q|\leq r-1$, then
$\mathcal{H}_{\overline\partial }^{r,0,t}(q)=0$, for all $r>0$ and $t\in \{0,1\}$.
\end{proposition}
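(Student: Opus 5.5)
We work with an element $\omega\in\mathcal{H}_{\overline\partial}^{r,0,t}(q)$ and show that $\omega=0$ whenever $(q+n-r)^2<n^2+2r$.

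\textbf{Reduction to $t=0$.} By \eqref{del xi eta commutators}, $\xi\lrcorner$ anticommutes with both $\overline\partial$ and $\overline\partial^*$, and it commutes with $\mathcal{L}_\xi$. If $t=1$, write $\omega=\eta\wedge\beta$ with $\beta=\xi\lrcorner\omega$. Then $\beta\in\Omega^{r,0,0}(q)$, $\overline\partial\beta=-\xi\lrcorner\overline\partial\omega=0$, and $\overline\partial^*\beta=0$ trivially, since $s=0$. So $\beta\in\mathcal{H}^{r,0,0}_{\overline\partial}(q)$, and it suffices to treat $t=0$.

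\textbf{The $\Delta$-energy from Theorem \ref{Laplacian theorem}.} Let $\omega$ be horizontal of type $(r,0,0)$ and charge $q$, with $\overline\partial\omega=0$; then $\Delta_{\overline\partial}\omega=0$. Evaluate \eqref{eq:laplaceDel} on $\omega$ term by term:
\begin{itemize}
\item $\Lambda\omega=0$, because $\Lambda$ contracts a $(1,1)$ pair and $s=0$.
\item $\eta\wedge\xi\lrcorner\omega=0$, since $\omega$ is horizontal.
\item $H\omega=(n-r)\omega$, so $-2iH\mathcal{L}_\xi\omega=2q(n-r)\omega$.
\item $-\mathcal{L}_\xi^2\omega=q^2\omega$.
\item $\xi\lrcorner(\overline\partial-\partial)\omega=\partial\xi\lrcorner\omega=0$.
\item $\eta\wedge\overline\partial^*\omega=0$.
\item The remaining term $-2i\eta\wedge\partial^*\omega$ is vertical, hence $L^2$-orthogonal to $\omega$.
\end{itemize}
Therefore
\[
\|d\omega\|^2+\|\delta\omega\|^2=\langle\langle\Delta\omega,\omega\rangle\rangle=(q^2+2q(n-r))\|\omega\|^2 .
\]
One can also record the pieces separately. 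Since $d^{1,1,-1}\omega=0$, we have $d\omega=\partial\omega+iq\,\eta\wedge\omega$, so $\|d\omega\|^2=\|\partial\omega\|^2+q^2\|\omega\|^2$. Similarly $\delta\omega=\partial^*\omega$.

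\textbf{Lower bound from Bochner and Gallot--Meyer.} Apply the Bochner formula \eqref{bochner} to $\omega$:
\[
\|d\omega\|^2+\|\delta\omega\|^2=\|\nabla\omega\|^2+\langle\langle\mathcal{B}^{[r]}\omega,\omega\rangle\rangle .
\]
To bound the curvature term, split the frame into $\xi$ and a horizontal frame. The Sasakian identities $R(X,\xi)Y=g(\xi,Y)X-g(X,Y)\xi$ give the vertical contributions explicitly. The transverse part is the Kähler-type Weitzenböck term, which on $(r,0)$-forms reduces to the transverse Ricci action on the $r$ holomorphic slots. Using $\mathrm{Ric}\geq 2n$, which makes the transverse Ricci at least $2n+2$, this should give a bound $\langle\mathcal{B}\omega,\omega\rangle\geq c(n,r)|\omega|^2$ with an explicit constant.

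For $\|\nabla\omega\|^2$, use \eqref{eq:gallomeyerine} with $\ell=2n+1$ and $m=r$, inserting the explicit expressions $\|d\omega\|^2=\|\partial\omega\|^2+q^2\|\omega\|^2$ and $\|\delta\omega\|^2=\|\partial^*\omega\|^2$. The unknown quantities $\|\partial\omega\|^2$ and $\|\partial^*\omega\|^2$ can then be eliminated against the energy identity, using also the Kähler identities of Proposition \ref{Kaehler Identity prop}; for instance $\partial^*\omega=i[\Lambda,\overline\partial]\omega=i\Lambda\overline\partial\omega=0$ on these forms, which simplifies the $\delta$ side considerably.

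The expected outcome is a quadratic inequality in $q$ that reduces to $(q+n-r)^2\geq n^2+2r$ when $\omega\neq 0$, which contradicts the hypothesis. I expect this curvature bookkeeping to be the main obstacle: the constant $n^2+2r$ must come out exactly. I would first test the bound on the round sphere, where $\mathrm{Ric}=2n$, to calibrate the constants.

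\textbf{The consequence.} Assume $|q|\le r-1$ with $r\geq1$. Then $q+n-r\in[n-2r+1,\,n-1]$. Clearly $(n-1)^2<n^2+2r$. For the other endpoint,
\[
(n-2r+1)^2-(n^2+2r)=4r^2-4rn+2n-6r+1 .
\]
This is convex in $r$; it equals $-2n-1$ at $r=1$ and $-4n+1$ at $r=n$, so it is negative on $1\le r\le n$. Hence $(q+n-r)^2<n^2+2r$, and the first part of the statement applies.
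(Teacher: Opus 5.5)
Several parts of your proposal are correct. The reduction of $t=1$ to $t=0$ (via $\mathcal{H}^{r,0,1}_{\overline\partial}(q)=\eta\wedge\mathcal{H}^{r,0,0}_{\overline\partial}(q)$) is right. So are the energy identity $\|d\omega\|^2+\|\delta\omega\|^2=(q^2+2q(n-r))\|\omega\|^2$ and the arithmetic for the consequence. The reduction is in fact better than the paper's direct treatment of $t=1$, which for $r<n$ only reaches the smaller range $|q+n-r|<\sqrt{(n-1)^2+4r}$.

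The gap is that for $r\geq 1$ the decisive step is only announced. Nothing is proved until you have an explicit pointwise bound on the Weitzenb\"ock term, and what is needed is
\[
\langle\mathcal{B}^{[r]}\omega,\omega\rangle\geq r(2n-r+1)|\omega|^2\qquad(\omega\in\Omega^{r,0,0}).
\]
The route you sketch would not produce this. For horizontal $\omega$, every term of $\mathcal{B}^{[r]}$ involving $\xi$ vanishes, so there are no ``vertical contributions''. Moreover, $\mathcal{B}^{[r]}$ is built from the Riemannian curvature of $M$, whose restriction to $\xi^\perp$ is not of K\"ahler type. The correct bookkeeping has two parts:
\begin{itemize}
\item The Ricci part is at least $2nr|\omega|^2$, using $\mathrm{Ric}\geq 2n$ directly.
\item Only the $e_k^+\lrcorner$ act nontrivially on $(r,0,0)$-forms, so the second part involves only $g(R(X^-,Y^-)Z^+,W^+)$. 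The Sasakian identity $R(X^-,Y^-)Z=g(Y^-,Z)X^--g(X^-,Z)Y^-$ (horizontal $X,Y,Z$) makes it contribute exactly $-r(r-1)|\omega|^2$. In the K\"ahler case this term would vanish.
\end{itemize}
Counting the transverse Ricci curvature ($\geq 2n+2$) on the $r$ holomorphic slots would instead give $(2n+2)r$. That is already false on the round $S^{2n+1}$, where the Weitzenb\"ock term on $r$-forms is $r(2n+1-r)$.

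Once this bound is established, your route closes with room to spare, so the worry that $n^2+2r$ must come out exactly is unfounded. You correctly noted that $\partial^*\omega=i[\Lambda,\overline\partial]\omega=0$, hence $\delta\omega=0$. Then Gallot--Meyer gives $\|\nabla\omega\|^2\geq\frac{1}{r+1}\|d\omega\|^2$, and Bochner gives $\|d\omega\|^2\geq(r+1)(2n-r+1)\|\omega\|^2$. Combined with the energy identity, this yields $(q+n-r)^2\geq(n+1)^2>n^2+2r$ whenever $\omega\neq 0$ and $r\geq1$. This is sharp: on $S^3$ the form $(Y_3^-)^\flat\in\mathcal{H}^{1,0,0}_{\overline\partial}(2)$ attains equality. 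For $r=0$, the identity $\|df\|^2=q(q+2n)\|f\|^2$ alone gives the claim. The paper does not exploit $\delta\omega=0$. It uses $\|\nabla\omega\|^2\geq\frac{1}{2n-r+2}(\|d\omega\|^2+\|\delta\omega\|^2)$, which is exactly what produces the weaker threshold $n^2+2r$.
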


\begin{proof}
The proof follows the same lines as in \cite{Tasker} without the additional assumption that the manifold $M$ is Sasaki-Einstein.  Let $\omega\in \mathcal{H}_{\overline\partial}^{r,0,t}(q)$, for $t=0,1$. 
Since $\Lambda$ is an operator of type $(-1,-1,0)$, then $\Lambda\omega=0$.
Hence by \eqref{eq:laplaceDel} and the identity 
$[\Lambda,\overline{\partial}]=-i\partial^*$, we get 
\begin{eqnarray*}
\langle\langle \Delta\omega,\omega\rangle\rangle 
&=&(q^2+2q(n-r-t))\|\omega\|^2+(2q+4(n-r-t+1))\|\xi\lrcorner \omega\|^2.
\end{eqnarray*}
On the other hand, using the Bochner formula \eqref{bochner} and Inequality \eqref{eq:gallomeyerine}, we find
for $t=0$ or ($t=1$ and $r<n$) (resp. for $t=1$, $r=n$), that
\begin{equation}\label{eq:bochnerine}
\langle\langle\Delta\omega,\omega\rangle\rangle\geq \frac{2n-r-t+2}{2n-r-t+1}\langle\langle\mathcal{B}^{[r+t]}\omega,\omega\rangle\rangle.
\end{equation}
(resp. $\langle\langle\Delta\omega,\omega\rangle\rangle\geq \frac{n+2}{n+1}\langle\langle\mathcal{B}^{[n+1]}\omega,\omega\rangle\rangle$). 
Now, the Bochner term can be expressed with respect to an orthonormal frame $\{f_k\}=\{\xi,e_k\}$ of $TM$  (see \cite[section 1.I]{Besse}, \cite[eq. 6]{GrosjeanMinSubmflds2004}) chosen in a way that ${\rm Ric} f_k=\lambda_k f_k$ with $\lambda_k\geq 2n$ (recall that ${\rm Ric}\, \xi=2n\xi$) as 
\begin{eqnarray*}
\langle\mathcal{B}^{[r+t]}\omega,\omega\rangle&=&\sum_{k,l}{\rm Ric}_{k,l}\langle f_k\lrcorner\omega,f_l\lrcorner\omega\rangle+\frac{1}{2}\sum_{a,b,k,l} R_{abkl}\langle f_b\lrcorner f_a\lrcorner\omega,f_l\lrcorner f_k\lrcorner\omega\rangle\\
&\geq &
2n(r+t)|\omega|^2-2(r+t-1)|\xi\lrcorner\omega|^2+\frac{1}{2}\sum_{a,b,k,l}R_{a b k l}\langle e_b\lrcorner e_a\lrcorner\omega,e_l\lrcorner e_k\lrcorner\omega\rangle.
\end{eqnarray*}
Here, we use the symmetry of $R$ and the fact that $R(X,Y)\xi=g(\xi, Y)X-g(\xi, X)Y$ for any $X,Y\in TM$. 
To compute the last sum, we use the expression of the curvature to have $$ R(X^-,Y^-)Z=g(Y^-,Z) X^--g(X^-,Z)Y^-$$ 
for any $X,Y,Z\perp \xi$ which comes from the general formula \cite[Prop. 2.1.1(iii)]{thesisStromenger}
$$R(\phi X,Y)Z+R(X,\phi Y)Z=-g(X,Z) \phi Y+g(\phi Y,Z)X-g(\phi X,Z)Y+g(Y,Z)\phi X.$$
Hence, using that $X^-\lrcorner\omega=0$ since $\omega$ is of type $(r,0,t)$,  the last sum becomes equal to
\begin{eqnarray*} 
\sum_{a,b,k,l}R_{a b k l}\langle e_b\lrcorner e_a\lrcorner\omega,e_l\lrcorner e_k\lrcorner\omega\rangle&=&\sum_{a,b,k,l}g(R(e_a^-,e_b^-)e_k^+,e_l^+)\langle e_b^+\lrcorner e_a^+\lrcorner\omega,e_l^+\lrcorner e_k^+\lrcorner\omega\rangle\\
&=&-2(r+t)(r+t-1)|\omega|^2+4(r+t-1)|\xi\lrcorner\omega|^2.\end{eqnarray*}
Here, we also use the identity $|\omega|^2=\frac{1}{(r+t)(r+t-1)}\sum_{k,l} |f_k\lrcorner f_l\lrcorner\omega|^2$ on the orthonormal frame $\{f_k\}=\{\xi,e_k\}$. Therefore, we find that 
$$\langle\mathcal{B}^{[r+t]}\omega,\omega\rangle\geq (r+t)(2n-r-t+1)|\omega|^2,$$ 
for $t=0,1$. Plugging this last term in \eqref{eq:bochnerine} we find, if $t=0$ or if ($t=1$ and $r<n$), the inequality 
\begin{gather*}
(q^2+2q(n-r-t))\|\omega\|^2+
(2q+4(n-r-t+1))\|\xi\lrcorner\omega\|^2\geq (2n-r-t+2)(r+t)\|\omega\|^2
\end{gather*}
(resp. $r=n$, $t=1$, the inequality $(q^2-2q)\|\omega\|^2+2q\|\xi\lrcorner\omega\|^2\geq (n+2)n\|\omega\|^2$). Now, when $t=0$, we have $\xi\lrcorner \omega=0$, and from the above inequality, we deduce that $\omega=0$ when $|q+n-r|<\sqrt{n^2+2r}$. when $t=1$ and $r<n$, we have that $\|\xi\lrcorner\omega\|^2=\|\omega\|^2$ and, therefore by the same inequality we deduce that $\omega=0$, when $|q+n-r|<\sqrt{(n-1)^2+4r}$ which is less than $\sqrt{n^2+2r}$ for $r<n$. The respective part gives the bound $|q|<\sqrt{n^2+2n}$. This finishes the proof. 
\end{proof}


A Sasakian manifold is said to be positive if its first basic Chern class can be represented by a positive basic definite $(1,1)$-form \cite{BoyerGalickiNakamaye2003PosSasak}. Given such a manifold, there exists a deformed Sasakian structure (Type II deformation) $(\tilde\xi=\xi,\tilde \eta,\tilde\phi,\tilde g)$ such that the deformed metric has strictly positive transverse Ricci curvature \cite{BoyerGalickiNakamaye2003PosSasak}. Recall that this Sasakian structure is given by $\tilde\eta=\eta+d_b^c\varphi$ for some basic function $\varphi$ where $d_b^c=i(\overline\partial_{b}-\partial_{b})$ and $\tilde\phi=\phi-\xi\otimes d_b^c\varphi\circ \phi$, that is $\tilde\phi(X)=\phi(X)-\tilde\eta(\phi(X))\xi$ for all $X\in TM$, and $\tilde g=\tilde\eta\otimes \tilde\eta\oplus d\tilde \eta(\cdot,\phi\cdot)$.  Now by performing a deformation of Type I, that is by taking $\hat\eta=a\tilde\eta,\hat\xi=\frac{1}{a}\tilde \xi$ and $\hat g=a\tilde g+a(a-1)\tilde\eta\otimes\tilde\eta$  for some number $a>0$, we can assume that ${\rm Ric}_{\hat g}\geq 2n$. Indeed, let $k={\rm inf}_M({\rm Ric}^T_{\tilde g})>0$ where ${\rm Ric}^T_{\tilde g}$ is the transversal Ricci curvature of the foliation given by $(\tilde\xi,\tilde g)$. As $\hat g=a\tilde g$ on $\xi^\perp$, we deduce that 
$${\rm Ric}^T_{\hat g}\geq \frac{1}{a}{\rm Ric}^T_{\tilde g}\geq \frac{k}{a}.$$
Hence, we choose $a$ so that $\frac{k}{a}\geq 2n+2$. Now, as $(M,\hat g,\hat\xi)$ is Sasakian, we have that 
$${\rm Ric}_{\hat g}|_{\hat\xi^\perp}={\rm Ric}^T_{\hat g}-2{\rm Id}_{\hat\xi^\perp} \geq \frac{k}{a}-2\geq 2n.$$
Since ${\rm Ric}_{\hat g}\hat\xi=2n\hat\xi$, we finally get ${\rm Ric}_{\hat g}\geq 2n$. A direct consequence of Proposition \ref{vanishing theorem} and the decomposition \eqref{eq:decompocohom}, we deduce the following result that was proven in \cite{Nozawa2014DefSasakiMetr}.

\begin{corollary}  Let $(M^{2n+1},g,\xi )$ be a compact positive Sasakian  manifold. Then, we get   $$\mathcal{H}_{\overline\partial }^{r,0}(\mathcal{F})=0,$$ 
for all $r>0$.
\end{corollary}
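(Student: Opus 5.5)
The plan is to combine the vanishing result of Proposition~\ref{vanishing theorem} with the identification \eqref{eq:decompocohom} between the charge-zero $\overline\partial$-harmonic forms and the basic Dolbeault harmonic forms. The key observation is that the basic cohomology $\mathcal{H}^{r,0}_{\overline\partial}(\mathcal{F})$ depends only on the transverse holomorphic structure and is invariant under the deformations used above. More precisely, $\mathcal{H}^{r,0}_{\overline\partial}(\mathcal{F})$ consists of basic forms of type $(r,0)$ annihilated by $\overline\partial_b$, since $\overline\partial_b^\ast$ vanishes on $(r,0)$-forms for degree reasons. That is, it is the space of basic transversely holomorphic $r$-forms.

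First I will check that this space is unchanged when we pass from $(\xi,\eta,\phi,g)$ to the Type~II deformation and then to the Type~I deformation $(\hat\xi,\hat\eta,\hat\phi,\hat g)$.

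For the Type~II deformation, $\xi$ is fixed, so the foliation and the basic forms are unchanged. The transverse complex structure induced on the normal bundle $TM/\mathbb{R}\xi$ is also unchanged, because $\tilde\phi$ differs from $\phi$ only by a multiple of $\xi$. Hence $\overline\partial_b$ on basic forms is unchanged.

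For the Type~I deformation, $\hat\xi=\frac1a\tilde\xi$ generates the same foliation, and $\hat\phi=\tilde\phi$ transversally. The transverse metric is only scaled, which does not affect the holomorphic condition.

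Next, for the deformed metric we have ${\rm Ric}_{\hat g}\geq 2n$, as shown just before the statement. Proposition~\ref{vanishing theorem} then applies to $(M,\hat g,\hat\xi)$ with charge $q=0$: for $r>0$ we have $|0|\leq r-1$, so $\mathcal{H}^{r,0,0}_{\overline\partial}(0)=0$ for the deformed structure. By \eqref{eq:decompocohom}, $\mathcal{H}^{r,0}_{\overline\partial}(\mathcal{F})$ embeds in $\mathcal{H}^{r,0,0}_{\overline\partial}(0)$ as the horizontal basic part. This gives $\mathcal{H}^{r,0}_{\overline\partial}(\mathcal{F})=0$ for $\hat g$, and therefore for $g$ by the invariance established above.

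The main obstacle is making this invariance precise. The Hodge-theoretic spaces a priori depend on the metric, through $\overline\partial_b^\ast$ and the harmonic representatives. I would handle this by noting that for bidegree $(r,0)$ no $\overline\partial_b$-exact forms occur. So the basic Dolbeault cohomology in these degrees equals $\ker\overline\partial_b$ on basic $(r,0)$-forms, which is metric-independent. Its identification with the harmonic space follows from the transverse Hodge theory of \cite{ParkRichardson1996_BasicLaplacian}, or directly since $\overline\partial_b^\ast=0$ on $(r,0)$-forms.

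A secondary point to verify is that the transverse type decomposition for $\tilde\phi$ agrees with that for $\phi$ on basic forms. This holds because basic forms annihilate $\xi$ and $\tilde\phi\equiv\phi$ modulo $\xi$.
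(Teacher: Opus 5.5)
Your proposal is correct and follows the paper's own argument. You deform to a Sasakian structure with $\mathrm{Ric}_{\hat g}\geq 2n$, apply Proposition~\ref{vanishing theorem} at charge $q=0$, and pass to basic forms via \eqref{eq:decompocohom}; your explicit check that $\mathcal{H}^{r,0}_{\overline\partial}(\mathcal{F})=\ker\overline\partial_b$ on basic $(r,0)$-forms is unchanged by the Type~I and Type~II deformations fills in a step the paper leaves implicit.
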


Now, we prove an eigenvalue estimate for the Dolbeault Laplacian $\Delta_{\overline\partial}$ and characterize its limiting case. We have

\begin{proof}[Proof of Theorem \ref{thm:eigenestimatedeltabar}] Let $m \in\{0,\ldots,n-1\}$ and take a nonzero eigenform $\omega$ of degree $m$ such that $\Delta_{\overline\partial}\omega=\lambda\omega$ with $\mathcal{L}_\xi\omega=iq\omega$, for some $q\leq 0$. Choose a positive number $b$ such that $b\leq n-m$. The form $\alpha_b:=L^b\omega$ does not vanish from the injectivity of $L^b$ in Lemma \ref{commutatorpower}. Now from Lemma \ref{Lambda to the m commutator with Delta}, we have that $\Delta_{\overline\partial}\alpha_b=(\lambda+2bq)\alpha_b$. Therefore, we find 
\begin{equation}\label{eq:inelambdaq}
\lambda\geq -2bq.
\end{equation}
On the other hand, applying Equality \eqref{laplaceDels} to $\alpha_b$ yields the identity 
\begin{equation}\label{eq:deltaalphabestim}
\Delta_{\partial}\alpha_b=(\lambda+2q(n-m-b))\alpha_b+2q\eta\wedge\xi\lrcorner\alpha_b.
\end{equation}
Here, two cases occur. If $\xi\lrcorner\omega=0$, then $\xi\lrcorner\alpha_b=0$ and we get that $\lambda\geq -2q(n-m-b)$. If $\xi\lrcorner\omega\neq 0$, then $\xi\lrcorner\alpha_b$ is also a nonzero $(m-1)$-form again from the injectivity of $L^b$, as $m-1+b<n$. Taking the interior product of \eqref{eq:deltaalphabestim} with $\xi$ yields $\Delta_{\partial}(\xi\lrcorner\alpha_b)=(\lambda+2q(n-m-b+1))\xi\lrcorner \alpha_b$. This gives that 
$$\lambda\geq -2q(n-m-b+1)\geq -2q(n-m-b),$$
as $q\leq 0$. Hence, in both cases we get that $\lambda\geq-2q(n-m-b)$. Finally adding this last inequality with \eqref{eq:inelambdaq} yields the result. In the following, we discuss the equality case of this estimate. If $\lambda=-q(n-m)$, then we have equalities in all the above inequalities. Hence, for  $q=0$ the eigenvalue $\lambda$ is necessarily zero. Now, if $q<0$, the form $\omega$ should then be horizontal and from \eqref{eq:inelambdaq}, we get that $n-m=2b$. Moreover, we have that $\Delta_{\overline\partial}\alpha_b=0$. Therefore, $\alpha_b\in \mathcal{H}^{m+2b}_{\overline\partial}(q)$, for $q<0$. Proposition \ref{anti Lefschetz proposition} tells us that $L\alpha_b=0$, which means that $L^{b+1}\omega=0$. Now we need to check the injectivity of $L^{b+1}$ on $m$-forms. In fact, when $n=1$, then $m=0$ and the condition $2b=n-m$ cannot be satisfied. The same is true when $n\geq 2$ and $m=n-1$. Thus, the equality cannot be attained in these two cases. Hence, we can take $n\geq 2$ and $m\leq n-2$. In this case, the map $L^{b+1}$ is injective, since $m+b+1=\frac{m+n+2}{2}\leq n$. Therefore the form $\omega$ must vanish which gives a contradiction. We deduce that the equality can be realized if and only if $\lambda=q=0$. This finishes the proof.
\end{proof}
\begin{remark}
Notice that the assumption of $q\leq 0$ can be dropped if we consider a horizontal eigenform $\omega$, as for example in the case of functions.
\end{remark}

In the following, we give eigenvalue estimates for the Hodge Laplacian that mainly uses Equation \eqref{eq:laplaceDel}. For all $m \in\{0,\ldots,2n+1\},$ we denote by $\lambda'_{1,m}$ (resp. $\lambda''_{1,m}$) the first positive eigenvalue of the Hodge Laplacian restricted to closed forms (resp. co-closed forms) of degree $m$. Recall that $\lambda''_{1,m}=\lambda'_{1,m+1}$ and by Hodge theorem, we have the equality 
$$\lambda_{1,m}={\rm min} (\lambda_{1,m}', \lambda_{1,m}'').$$

\begin{proof}[Proof of Theorem \ref{eigenvalue estimate 1}] Let $\omega$ be an element in $\mathcal{H}_{\overline\partial,P}^{m}(0)$. Then, from \eqref{laplaceDels}, we get that $\Delta_\partial\omega=0$ which gives that $\partial\omega=0$ and $\partial^\ast\omega=0$. Therefore Equation \eqref{eq:laplaceDel} allows to write that 
$$
\Delta\omega=4\eta\wedge\xi\lrcorner H\omega.
$$
Hence 
\begin{eqnarray*}
\langle\langle\Delta\omega,\omega\rangle\rangle&=&4\langle\langle H\xi\lrcorner \omega,\xi\lrcorner\omega\rangle\rangle =4(n-m+1)||\xi\lrcorner\omega||^2.
\end{eqnarray*}
If $m>n+1$, the r.h.s of the above equality becomes nonpositive while the l.h.s. is nonnegative. Hence, $\Delta\omega=0$ and $\xi\lrcorner\omega=0$. In this case, the form $\omega$ must satisfy $\eta\wedge\omega=0$ from \cite{Tachibana1965OnHarmonicTensors}, and therefore, it vanishes. When $m=n+1$, we have that $\Delta\omega=0$ and, thus, $\omega$ is an element in $H^{n+1}(M)$. 

Assume now that $\mathcal{H}_{\overline\partial,P}^{m}(0)\neq 0$ for some $m<n$. We consider two cases: the first case is when $\xi\lrcorner\omega=0$. Then the nonzero form $\eta\wedge\omega\in \mathcal{H}^{m+1}_{\overline\partial, P}(0)$ and we have 
\begin{eqnarray*}
\langle\langle\Delta(\eta\wedge\omega),\eta\wedge\omega\rangle\rangle&=& 4(n-m)||\xi\lrcorner(\eta\wedge\omega)||^2=4(n-m)||\eta\wedge\omega||^2.
\end{eqnarray*}
Now, the form $\eta\wedge\omega$ is co-closed. Indeed, as $\Delta\omega=0$, we deduce that $\delta\omega=0$ and from the rule $\{\delta,\eta\wedge\}=-\mathcal{L}_\xi$, we get that $\delta(\eta\wedge\omega)=0$. We also have that $\eta\wedge\omega$ is orthogonal to any harmonic form $\omega_0$ of degree $m+1<n+1$, as $\langle\eta\wedge\omega,\omega_0\rangle=\langle\omega,\xi\lrcorner\omega_0\rangle=0$ by the fact that $\omega_0$ is horizontal \cite{Tachibana1965OnHarmonicTensors}. We then deduce the estimate for the first positive eigenvalue $\lambda_{1,m+1}''$ 
$$\lambda_{1,m+1}''\leq 4(n-m).$$
The second case is when $\phi:=\xi\lrcorner\omega$ is not zero. Therefore, $\xi\lrcorner\phi=0$ and $\phi$ is in $\mathcal{H}_{\overline\partial,P}^{m-1}(0)$. Hence, from the first case we find that 
$$\lambda_{1,m}''\leq 4(n-m+1).$$
As $\lambda''_{1,m}=\lambda'_{1,m+1}$, we deduce the estimate 
$$\lambda_{1,m+1}<4(n-m+1).$$
To prove the last part, we take any positive integer $b\leq \frac{n-m}{2}$ and consider the $(m+2b)$-differential form  given by $\alpha_b:=L^b\omega$. From Lemma \ref{commutatorpower}, the form $\alpha_b$ does not vanish. As $\omega\in \mathcal{H}_{\overline\partial,P}^{m}(0)$, Lemma \ref{Lambda to the m
commutator with Delta} tells us that $\Delta_{\overline\partial}\alpha_b=0$, which also gives that $\Delta_{\partial}\alpha_b=0$ from Equation \eqref{laplaceDels}. Hence, we get $\overline\partial^\ast \alpha_b=0$ and $\partial^\ast \alpha_b=0$. Applying Equation \eqref{eq:laplaceDel} to the form $\alpha_b$ yields after using that, $\overline\partial\alpha_b=0$ and 
\begin{eqnarray}\label{eq:llambdaalphab}
L\Lambda\alpha_b&=&bLHL^{b-1}\omega+b(b-1)\alpha_b\nonumber\\
&=&bH\alpha_b+b(b+1)\alpha_b\nonumber\\
&=&b(n-m-b+1)\alpha_b+b\eta\wedge\xi\lrcorner\alpha_b
\end{eqnarray}
from Lemma \ref{commutatorpower}, the following
$$\Delta\alpha_b=4b(n-m-b+1)\alpha_b+4(n-m-b+1)\eta\wedge\xi\lrcorner\alpha_b.$$
Now, we proceed as before. We first consider the case when $\xi\lrcorner\omega=0$. In this case, we have that $\xi\lrcorner\alpha_b=0$ and $\alpha_b$ is orthogonal to any harmonic form $\omega_0$ of degree $m+2b<n+1$, as $\langle\alpha_b,\omega_0\rangle=\langle\omega,\Lambda^b\omega_0\rangle=0$ using that $\omega_0$ is primitive by \cite{Tachibana1965OnHarmonicTensors}. We deduce 
$$\lambda_{1,m+2b}'\leq 4b(n-m-b+1).$$
Notice here that the form $\alpha_b$ is closed, since $\partial\alpha_b=\overline\partial\alpha_b=0$ and $d^{1,1,-1}\alpha_b=d^{0,0,1}\alpha_b=0$, as $\alpha_b$ is horizontal and $\mathcal{L}_\xi\alpha_b=0$. If $\xi\lrcorner\omega$ is not zero, then  $\phi:=\xi\lrcorner\omega\in \mathcal{H}_{\overline\partial,P}^{m-1}(0)$. We replace $b$ by $b+1$ and $m$ by $m-1$ in the previous step to get that
$$\lambda_{1,m+2b+1}'\leq 4(b+1)(n-m-b+1).$$
We point out that the form $L^{b+1}\phi$ is orthogonal to any harmonic form of degree $m-1+2(b+1)$ as the condition $m-1+2(b+1)\leq n+1$ is still satisfied. Since $\lambda_{1,m+2b+1}'=\lambda_{1,m+2b}''$, we deduce that 
$$\lambda_{1,m+2b}< 4(b+1)(n-m-b+1).$$
This finishes the proof.
\end{proof}

\begin{remark} It is not difficult to check that the de Rham cohomology group $\mathcal{H}^m(M)$ can be written as $$\mathcal{H}^m(M)=\oplus_{r+s=m} \mathcal{H}_{\overline\partial,P}^{r,s,0}(0) \quad\text{for $m<n+1$, and}\quad \mathcal{H}^{n+1}(M)=\oplus_{r+s=n} \mathcal{H}_{\overline\partial,P}^{r,s,1}(0).$$ Indeed, take any harmonic form $\omega$ of degree $m<n+1$, then by \cite{Tachibana1965OnHarmonicTensors}, the form $\omega$ is horizontal and primitive, that is $\xi\lrcorner\omega=0$ and $\Lambda\omega=0$. As $\mathcal{L}_\xi\omega=0$, Equation \eqref{eq:laplaceDel} applied to $\omega$ yields $\Delta_{\overline\partial} \omega=0$. Thus, $\omega\in \mathcal{H}_{\overline\partial,P}^{m}(0)$ and the decomposition follows. When $m=n+1$, again by Tachibana \cite{Tachibana1965OnHarmonicTensors},  the form $
\omega$ must be primitive and $\eta\wedge\omega=0$. Therefore, from the relation $\{\delta,\xi
\lrcorner\}=2\Lambda$, we get $\xi\lrcorner\omega\in\mathcal{H}^{n}(M)$. Hence $\mathcal{H}^{n+1}(M)\subset
\eta\wedge \mathcal{H}^n(M)$. For the converse, let $\omega\in \mathcal{H}
^n(M)$. Hence, $L\omega=0$ by \cite{Tachibana1965OnHarmonicTensors}, and
therefore $d(\eta\wedge\omega)=2L\omega=0$ and $\delta(\eta\wedge\omega)=-\mathcal{L}_\xi\omega=0$. Hence $\eta\wedge\omega\in \mathcal{H}^{n+1}(M)$. Thus, we get that $\mathcal{H}^{n+1}(M)=\eta\wedge \mathcal{H}^n(M)$. 
\end{remark}
In the following, we give an eigenvalue estimate in case when the charge $q$ is not zero. We have

\begin{proof}[Proof of Theorem \ref{eigenvalue estimate 2}]
Let $\omega$ be an element in $\mathcal{H}_{\overline\partial}^{m}(q)$. Using %
\eqref{laplaceDels}, we have 
\begin{eqnarray*}
0&\leq& \langle\langle \Delta _{\partial}\omega,\omega\rangle\rangle
=2q\langle\langle H\omega,\omega\rangle\rangle  \notag \\
&=&2q(n-m)\|\omega\|^2 +2q\| \xi\lrcorner \omega\|^2\\
&\leq& 0,
\end{eqnarray*}
if $q<0$ and $m<n$. This shows the first part. The respective part comes from the fact that the Hodge star operator commutes with $\Delta_{\overline\partial}$ from Proposition \ref{hodgestar}.

When $m=n$ and $q<0$, we get from the previous inequality that $\xi\lrcorner \omega=0$. Therefore $H\omega=0$. Using Proposition \ref{anti Lefschetz proposition}, we have that $L\omega=0$ and, thus, $L\Lambda\omega=0$. By taking the Hermitian product with $\omega$, we deduce $\Lambda\omega=0$. Since $[\Lambda,\overline\partial]=-i\partial^\ast $, this yields $\partial^\ast \omega =0$ and 
\begin{equation*}
\delta\omega =\partial^\ast \omega +\overline{\partial }^\ast\omega +2\eta\wedge \Lambda \omega-\xi\lrcorner \mathcal{L}_{\xi }\omega =0.
\end{equation*}%
Equation \eqref{eq:laplaceDel} gives $\Delta \omega =q^{2}\omega $.  The respective part is also a consequence of Proposition \ref{hodgestar}.

When $q>0$ and $m<n+1$, we recall from Theorem~\ref{Lefschetz theorem} that $\Lambda\omega=0$. Thus, the commutator $[\Lambda,\overline\partial]=i\partial^\ast$ allows to deduce that $\partial^\ast\omega=0$. For any nonnegative number $b$ with $b\leq n-m$, take the form $\alpha_b:=L^b\omega$. By  Lemma \ref{commutatorpower}, the form $\alpha_b$ does not vanish. Now, we have that $\overline\partial\alpha_b=0$ and, from the commutator  $[\partial^\ast,L]=-i\overline\partial$, we get that $\partial^\ast \alpha_b=0$. Also, one can easily prove by induction from $[\overline\partial^\ast,L]=i\partial$ that $\overline\partial^\ast \alpha_b=ib \partial \alpha_{b-1}$. As a consequence of Lemma \ref{Lambda to the m
commutator with Delta}, we find 
\begin{equation}\label{eq:laplacianalpha}
\Delta_{\overline\partial}\alpha_b=2bq\alpha_b.
\end{equation}

 Applying  Equation \eqref{eq:laplaceDel} to $\alpha_b$ yields 
\begin{eqnarray*}
\Delta\alpha_b&=&2\Delta_{\overline\partial}\alpha_b+2qH\alpha_b+2i\eta\wedge\overline\partial^\ast\alpha_b-2i\xi\lrcorner\partial\alpha_b+4L\Lambda\alpha_b+4\eta\wedge\xi\lrcorner H\alpha_b+q^2\alpha_b\\
&\stackrel{\eqref{eq:laplacianalpha}, \eqref{eq:llambdaalphab}}{=}&\left(2q(n-m)+4b(n-m-b+1)+q^2\right)\alpha_b+2(q+2(n-m-b+1))\eta\wedge\xi\lrcorner\alpha_b\\&&+2i\eta\wedge\overline\partial^\ast\alpha_b-2i\xi\lrcorner\partial\alpha_b. 
\end{eqnarray*}
Two cases occur. The first case is when $\xi\lrcorner \omega=0$. Here, we get $\xi\lrcorner\alpha_b=0$ and, thus,
\begin{eqnarray*}
\langle\langle\Delta\alpha_b,\alpha_b\rangle\rangle
=(2q(n-m)+4b(n-m-b+1)+q^2)||\alpha_b||^2.
\end{eqnarray*}
The form $\alpha_b$ is $L^2$-orthogonal to any harmonic form $\beta$ since 
$$iq\langle\langle\alpha_b,\beta\rangle\rangle=\langle\langle\mathcal{L}_\xi\alpha_b,\beta\rangle\rangle=-\langle\langle\alpha_b,\mathcal{L}_\xi\beta\rangle\rangle=0$$
with $q\neq 0$, we deduce by the min-max principle that the first positive eigenvalue $\lambda_{1,m+2b}$ satisfies
$$\lambda_{1,m+2b}\leq q^2+2q(n-m)+4b(n-m-b+1).$$
However, when $b=0$, the form $\alpha_0=\omega$ is co-closed, as 
$$\delta\omega=\partial^\ast\omega+\overline\partial^\ast\omega-\xi\lrcorner\mathcal{L}_\xi\omega+2\eta\wedge \Lambda\omega=0.$$
Hence the above inequality reduces to 
$$\lambda_{1,m}''\leq q^2+2q(n-m).$$
The second case is when $\xi\lrcorner\omega\neq 0$. In this case, the form $\phi=\xi\lrcorner\omega\in \mathcal{H}^{m-1}_{\overline\partial}(q)$. Therefore, we are back to the previous case by taking the form $L^{b+1}\phi$ and we get, for $b=0$, 
$$\lambda_{1,m-1}''\leq q^2+2q(n-m+1),$$ 
and, for $b\geq 1$, that 
$$\lambda_{1,m+2b+1}\leq q^2+2q(n-m+1)+4(b+1)(n-m-b+1).$$
Combining the two cases, we deduce after using $\lambda_{1,m-1}''=\lambda_{1,m}'$ that 
$$\lambda_{1,m}<q^2+2q(n-m+1),$$
and, for $1\leq b\leq n-m$, 
$${\rm min}(\lambda_{1,m+2b}, \lambda_{1,m+2b+1})<q^2+2q(n-m+1)+4(b+1)(n-m-b+1).$$
\end{proof}
When the Hodge Laplacian acts on the space of horizontal or vertical forms followed by a projection, a lower bound estimate can be deduced from Equation \eqref{eq:laplaceDel} that also depends on the charge $q$. We have
\begin{theorem} Let $(M^{2n+1},g,\xi)$ be a compact Sasakian manifold. If $P_v=\eta\wedge\xi\lrcorner$ and $P_h=1-P_v$ are the pointwise projections to vertical and horizontal forms, respectively, then the spectrum of 
\[
P_h\Delta P_h + P_v \Delta P_v 
\]
on differential $m$-forms of charge $q$
is bounded from below by $${\rm min}(q^{2}+2q(n-m),q^2+2(q+2)(n-m+1)).$$

\end{theorem}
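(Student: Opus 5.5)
The plan is to read off $P_h\Delta P_h+P_v\Delta P_v$ directly from Equation \eqref{eq:laplaceDel}, restricted to $\Omega^m(q)$. The key fact is that every term of \eqref{eq:laplaceDel} other than the two first-order mixed terms commutes with $P_v=\eta\wedge\xi\lrcorner$, while those two mixed terms only exchange horizontal and vertical components. If this holds, the operator becomes block-diagonal, with an explicit zeroth-order part added to nonnegative operators.

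\textbf{Step 1: the mixed terms disappear.} First consider $2i\xi\lrcorner(\overline\partial-\partial)$. By \eqref{del xi eta commutators} we have $\{\partial,\xi\lrcorner\}=\{\overline\partial,\xi\lrcorner\}=0$, so $\partial$ and $\overline\partial$ preserve horizontal forms. Hence on a horizontal form $\xi\lrcorner\partial\omega_0=-\partial\,\xi\lrcorner\omega_0=0$, and so $P_h\,\xi\lrcorner(\overline\partial-\partial)P_h=0$. The output of $\xi\lrcorner$ is always horizontal, so $P_v\,\xi\lrcorner(\cdots)=0$. Next consider $2i\eta\wedge(\overline\partial^*-\partial^*)$. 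Its output is vertical, so $P_h$ kills it. For a vertical input $\eta\wedge\beta$, the relation $\{\partial^*,\eta\wedge\}=0$ gives $\eta\wedge\partial^*(\eta\wedge\beta)=-\eta\wedge\eta\wedge\partial^*\beta=0$, and similarly for $\overline\partial^*$. Therefore both mixed terms contribute nothing to $P_h\Delta P_h+P_v\Delta P_v$.

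\textbf{Step 2: the remaining terms commute with $P_v$.} The remaining terms are $2\Delta_{\overline\partial}-2iH\mathcal L_\xi+4L\Lambda+4\eta\wedge\xi\lrcorner H-\mathcal L_\xi^2$, and each commutes with $P_v$. For $\Delta_{\overline\partial}$ this follows because $\overline\partial$ and $\overline\partial^*$ anticommute with both $\xi\lrcorner$ and $\eta\wedge$. For $H$, $L$, $\Lambda$ and $\mathcal L_\xi$ it was noted after \eqref{eq:comm lambda L}. Consequently, on $\Omega^m(q)$,
$$P_h\Delta P_h+P_v\Delta P_v=2\Delta_{\overline\partial}+4L\Lambda+q^2+2qH+4P_vH,$$
and this operator preserves both the horizontal and the vertical subspaces.

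\textbf{Step 3: evaluate $H$ and bound each block.} By \eqref{eq:comm lambda L}, $H=n-m$ on horizontal $m$-forms and $H=n-m+1$ on vertical $m$-forms. The operators $\Delta_{\overline\partial}$ and $L\Lambda=\Lambda^*\Lambda$ are nonnegative.
\begin{itemize}
\item On the horizontal block the quadratic form is at least $q^2+2q(n-m)$.
\item On the vertical block it is at least $q^2+2q(n-m+1)+4(n-m+1)=q^2+2(q+2)(n-m+1)$.
\end{itemize}
Since the operator is block-diagonal, its spectrum on $\Omega^m(q)$ is bounded below by the minimum of these two numbers, which is the claimed bound.

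The step needing the most care is Step 1, specifically checking that the operator really is block-diagonal. This requires that $\Delta_{\overline\partial}$ (not just $\overline\partial$) commutes with $P_v$, and that the cross terms vanish exactly rather than only in the quadratic form. Both follow from the anticommutation relations \eqref{del xi eta commutators} together with $\{\eta\wedge,\xi\lrcorner\}=1$.
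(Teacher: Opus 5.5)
Your proof is correct and follows essentially the same route as the paper. Both arguments read off the bound from Equation \eqref{eq:laplaceDel} by discarding the nonnegative terms $2\Delta_{\overline\partial}$ and $4L\Lambda$, evaluating $H$ on horizontal and vertical $m$-forms, and observing that the two mixed first-order terms contribute nothing on purely horizontal or purely vertical forms. The only difference is that you upgrade the paper's quadratic-form splitting $\langle\langle (P_h\Delta P_h+P_v\Delta P_v)\omega,\omega\rangle\rangle=\langle\langle \Delta P_h\omega,P_h\omega\rangle\rangle+\langle\langle \Delta P_v\omega,P_v\omega\rangle\rangle$ to an exact block-diagonal operator identity, which costs the extra (correctly justified) check that $\Delta_{\overline\partial}$, $H$ and $L\Lambda$ commute with $P_v$.
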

\begin{proof}Take any differential form $\omega$ of degree $m$ with $\mathcal{L}_\xi\omega =iq\omega$. Then again from Equation \eqref{eq:laplaceDel} we have
\begin{eqnarray*}
\langle\langle \Delta \omega ,\omega \rangle\rangle &=&2\langle\langle \Delta _{%
\overline{\partial }}\omega ,\omega \rangle\rangle  +2q\langle\langle H\omega ,\omega \rangle\rangle
+2\langle\langle i  (\eta \wedge ( \overline{\partial }%
^{\ast }-\partial ^{\ast } ) +\xi \lrcorner ( \overline{\partial }%
-\partial  )  ) \omega ,\omega \rangle\rangle  \\
&& 
+4||\Lambda\omega||^2
+4\langle\langle \xi\lrcorner H\omega
,\xi\lrcorner\omega \rangle\rangle 
+q^{2}||\omega||^2 
\\
&\geq& 2q(n-m)||\omega||^2+2(q+2(n-m+1))||\xi\lrcorner\omega||^2+q^{2}||\omega||^2\\&&
 +2\langle\langle i  (\eta \wedge ( \overline{\partial }%
^{\ast }-\partial ^{\ast } ) +\xi \lrcorner ( \overline{\partial }%
-\partial  )  ) \omega ,\omega \rangle\rangle.
\end{eqnarray*}
When $\omega$ is horizontal or vertical, the last term clearly vanishes. By writing $\langle\langle (P_h\Delta P_h + P_v \Delta P_v)\omega,\omega\rangle\rangle=\langle\langle \Delta P_h\omega,P_h\omega\rangle\rangle + \langle\langle \Delta P_v\omega,P_v\omega\rangle\rangle$, we deduce the statement of the theorem. 
\end{proof}

We end this section by describing a part of the spectrum of the Hodge Laplacian in terms of the charge $q$ and eigenvalues of the scalar Dolbeault Laplacian. 

\begin{proof}[Proof of Theorem \ref{eigenvalue}] Let $f$ be any function such that $\Delta_{\overline \partial} f=\lambda f$ with $\xi(f)=iq f$. We consider the differential $2m$-form $\alpha_m:=f L^m(1)=L^m(f)$ for any positive integer $m$. Using Lemma \ref{Lambda to the m
commutator with Delta}, we have that 
\begin{equation} 
\Delta_{\overline\partial}\alpha_m=(\lambda+2mq)\alpha_m.\label{eq:laplaciaalphambar}
\end{equation} 
and, thus, 
\begin{equation}\label{eq:deltaalpham}
\Delta_{\partial}\alpha_m=(\lambda+2q(n-m))\alpha_m,
\end{equation}
by Equation \eqref{laplaceDels}.  From Lemma \ref{commutatorpower}, we have that 
$\Lambda L^{m}(1)=m(n-m+1) L^{m-1}(1)$ 
and
 \begin{equation}\label{eq:partiallambda}
L\Lambda\alpha_m=fL\Lambda L^m(1)=m(n-m+1)\alpha_m.
\end{equation}
Hence, Equations \eqref{eq:laplaceDel}, \eqref{eq:laplaciaalphambar} and \eqref{eq:partiallambda} imply
\begin{equation*}
\Delta\alpha_m=(2\lambda+2qn+q^2+4m(n-m+1))\alpha_m+2i\eta \wedge
\left( \overline{\partial }^{\ast }\alpha_m-\partial ^{\ast }\alpha_m\right).
\end{equation*}
Now, we are going to compute the Laplacian of the forms $\eta \wedge \overline{\partial }^{\ast }\alpha_m$ and  $\eta\wedge {\partial }^{\ast }\alpha_m$. First,  we can easily show 
\begin{equation}\label{eq:partialstarpartialalphe}
\overline\partial^\ast\alpha_m=im\partial\alpha_{m-1}\quad\text{and}\quad\partial^\ast\alpha_m=-im\overline\partial\alpha_{m-1}.
\end{equation}
Here, we use the commutators from Proposition \ref{Kaehler Identity prop} and $\partial,\overline\partial$ commute with $L$. Again using Equation \eqref{eq:laplaceDel} applied to the vertical form $\eta \wedge \overline{\partial }^{\ast }\alpha_m$ gives
\begin{eqnarray*}
\Delta(\eta \wedge \overline{\partial }^{\ast }\alpha_m)&=&2\Delta _{\overline{\partial }}(\eta \wedge \overline{\partial }^{\ast }\alpha_m)+\left(q^2+2q(n-2m+1)+4(n-2m+1)\right) \eta \wedge \overline{\partial }^{\ast }\alpha_m\\&&-2i\overline\partial\overline\partial^\ast\alpha_m+2i \partial\overline\partial^\ast\alpha_m+4\eta \wedge L\Lambda  \overline{\partial }^{\ast }\alpha_m\\
&\stackrel{\eqref{eq:partialstarpartialalphe},\eqref{eq:laplaciaalphambar}}{=}&\left(2\lambda+q^2+2q(n+1)+4(n-2m+1)\right )\eta \wedge \overline{\partial }^{\ast }\alpha_m+2m\overline\partial\partial \alpha_{m-1}\\&&+4\eta \wedge (\overline{\partial }^{\ast }L-i\partial)\Lambda  \alpha_m\\
&\stackrel{\eqref{eq:partiallambda}}{=}&\left(2\lambda+q^2+2q(n+1)-4m(m-n)\right )\eta \wedge \overline{\partial }^{\ast }\alpha_m+2m\overline\partial\partial \alpha_{m-1}.
\end{eqnarray*}
In the last equality, we also use that
\begin{equation*}
i\partial\Lambda\alpha_m=i\partial(f\Lambda L^{m}(1))=im(n-m+1)\partial(fL^{m-1}(1))=(n-m+1)\overline\partial^\ast\alpha_m,
\end{equation*}
from Equality \eqref{eq:partialstarpartialalphe}. In the same way and by a similar computation as above using \eqref{eq:laplaceDel} applied to the vertical form $\eta \wedge {\partial }^{\ast }\alpha_m$, we have  
\begin{eqnarray*}
\Delta(\eta \wedge {\partial }^{\ast }\alpha_m)&=&2\Delta _{\overline{\partial }}(\eta \wedge {\partial }^{\ast }\alpha_m)+\left(q^2+2q(n-2m+1)+4(n-2m+1)\right) \eta \wedge {\partial }^{\ast }\alpha_m\\&&-2i\overline\partial\partial^\ast\alpha_m+2i \partial\partial^\ast\alpha_m+4\eta \wedge L\Lambda  {\partial }^{\ast }\alpha_m\\
&\stackrel{\eqref{laplaceDels},\eqref{eq:partialstarpartialalphe}}{=}&2\eta\wedge (\Delta_{\partial}-2qH)\partial^\ast\alpha_m+\left(q^2+2q(n-2m+1)+4(n-2m+1)\right) \eta \wedge {\partial }^{\ast }\alpha_m\\&&+2m \partial\overline\partial\alpha_{m-1}+4\eta \wedge (\partial^\ast L+i\overline\partial) \Lambda\alpha_m\\
&\stackrel{\eqref{eq:deltaalpham}}{=}&\left(2\lambda+q^2+2q(n-1)-4m(m-n)\right )\eta \wedge {\partial }^{\ast }\alpha_m+2m(-\overline\partial\partial-2iqL)  \alpha_{m-1}\\
&\stackrel{\eqref{eq:partialoverpartial}}{=}&\left(2\lambda+q^2+2q(n-1)-4m(m-n)\right )\eta \wedge {\partial }^{\ast }\alpha_m-2m \overline\partial\partial \alpha_{m-1}-4imq\alpha_m. 
\end{eqnarray*}
In the above computation, we use $i\overline\partial\Lambda\alpha_m=-(n-m+1)\partial^\ast\alpha_m$, which can be shown in the same way as before. We still need to compute $\Delta(\overline\partial\partial\alpha_{m-1})$. First, we have 
\begin{eqnarray}\label{eq:llambdapartialalpham}
L\Lambda(\overline\partial\partial\alpha_{m-1})&=&L(\overline\partial\Lambda-i\partial^\ast)\partial\alpha_{m-1}\nonumber\\
&=&L\overline\partial(\partial\Lambda+i\overline\partial^\ast)\alpha_{m-1}-iL\partial^\ast\partial\alpha_{m-1}\nonumber\\
&\stackrel{\eqref{eq:partiallambda}}{=}&(m-1)(n-m+2)\overline\partial\partial\alpha_{m-1}+i\overline\partial L\overline\partial^\ast\alpha_{m-1}-iL\partial^\ast\partial\alpha_{m-1}\nonumber\\
&=&(m-1)(n-m+2)\overline\partial\partial\alpha_{m-1}+i\overline\partial (\overline\partial^\ast L-i\partial)\alpha_{m-1}-i(\partial^\ast L+i\overline\partial)\partial\alpha_{m-1}\nonumber\\
&=&(mn-m^2+3m-n)\overline\partial\partial\alpha_{m-1}+i\overline\partial\overline\partial^\ast\alpha_m-i\partial^\ast\partial\alpha_m\nonumber\\
&\stackrel{\eqref{eq:partialstarpartialalphe}}{=}&(mn-m^2+2m-n)\overline\partial\partial\alpha_{m-1}-i(\Delta_\partial-\partial\partial^\ast)\alpha_m\nonumber\\
&\stackrel{\eqref{eq:deltaalpham},\eqref{eq:partialstarpartialalphe}}{=}&(mn-m^2+2m-n)\overline\partial\partial\alpha_{m-1}-i(\lambda+2q(n-m))\alpha_m+m\partial\overline\partial\alpha_{m-1}\nonumber\\
&\stackrel{\eqref{eq:partialoverpartial}}{=}&(n-m)(m-1)\overline\partial\partial\alpha_{m-1}-i(\lambda+2qn)\alpha_m.
\end{eqnarray}
We also compute 
\begin{eqnarray}\label{eq:partialstarminuspartial}
(\overline\partial^\ast-\partial^\ast)(\overline\partial\partial\alpha_{m-1})&\stackrel{\eqref{eq:partialstarpartialcom}}{=}&(\Delta_{\overline\partial}-\overline\partial\overline\partial^\ast)\partial\alpha_{m-1}+\overline\partial\partial^\ast\partial\alpha_{m-1}\nonumber\\
&\stackrel{\eqref{laplaceDels},\eqref{eq:partialstarpartialalphe}}{=}&(\Delta_{\partial}-2qH)\partial\alpha_{m-1}+\overline\partial(\Delta_\partial-\partial\partial^\ast)\alpha_{m-1}\nonumber\\
&\stackrel{\eqref{eq:deltaalpham},\eqref{eq:partialoverpartial}}{=}&(\lambda+2mq)\partial\alpha_{m-1}+(\lambda+2q(n-m+1))\overline\partial\alpha_{m-1}\nonumber\\&&-(-\partial\overline\partial-2iqL)\partial^\ast\alpha_{m-1}\nonumber\\
    &=&(\lambda+2mq)\partial\alpha_{m-1}+(\lambda+2q(n-m+1))\overline\partial\alpha_{m-1}\nonumber\\&&+2iq(\partial^\ast L+i\overline\partial)\alpha_{m-1}\nonumber\\
    &\stackrel{\eqref{eq:partialstarpartialalphe}}{=}&\frac{-i(\lambda+2mq)}{m}\overline\partial^\ast\alpha_{m}+\frac{i(\lambda+2qn)}{m}\partial^\ast\alpha_{m}.
\end{eqnarray}
Equation \eqref{eq:laplaceDel} applied to $\overline\partial\partial\alpha_{m-1}$ gives after using \eqref{eq:llambdapartialalpham} and \eqref{eq:partialstarminuspartial} the following:
\begin{eqnarray*}
\Delta (\overline\partial\partial\alpha_{m-1})&=&2\Delta _{\overline{\partial }}(\overline\partial\partial\alpha_{m-1})+(2q(n-2m)+q^2)\overline\partial\partial\alpha_{m-1}+
\frac{2(\lambda+2mq)}{m}\eta \wedge\overline\partial^\ast\alpha_{m}\\&&-\frac{2(\lambda+2qn)}{m}\eta\wedge\partial^\ast\alpha_{m} +4(n-m)(m-1)\overline\partial\partial\alpha_{m-1}-4i(\lambda+2qn)\alpha_m\\
&\stackrel{\eqref{laplaceDels}}{=}&2\overline\partial(\Delta_\partial-2qH)\partial\alpha_{m-1}+\left(2q(n-2m)+q^2+4(n-m)(m-1)\right)\overline\partial\partial\alpha_{m-1}\\&&+
\frac{2(\lambda+2mq)}{m}\eta \wedge\overline\partial^\ast\alpha_{m}-\frac{2(\lambda+2qn)}{m}\eta\wedge\partial^\ast\alpha_{m} -4i(\lambda+2qn)\alpha_m\\
&\stackrel{\eqref{eq:deltaalpham}}{=}&\left(2\lambda+2nq+q^2+4(n-m)(m-1)\right)\overline\partial\partial\alpha_{m-1}+
\frac{2(\lambda+2mq)}{m}\eta \wedge\overline\partial^\ast\alpha_{m}\\&&-\frac{2(\lambda+2qn)}{m}\eta\wedge\partial^\ast\alpha_{m} -4i(\lambda+2qn)\alpha_m.
\end{eqnarray*}
We summarize the previous computations and write the equations as a linear system:
\begin{equation}\label{eq:system}
\left\{
\begin{matrix}
\Delta\alpha_m=A\alpha_m+2i\eta \wedge
\overline{\partial }^{\ast }\alpha_m-2i\eta\wedge \partial ^{\ast }\alpha_m\\\\
\Delta(\eta \wedge \overline{\partial }^{\ast }\alpha_m)=B\eta \wedge \overline{\partial }^{\ast }\alpha_m+2m\overline\partial\partial \alpha_{m-1}\\\\
\Delta(\eta \wedge {\partial }^{\ast }\alpha_m)=C\eta \wedge {\partial }^{\ast }\alpha_m-2m \overline\partial\partial \alpha_{m-1}-4imq\alpha_m\\\\
\Delta (\overline\partial\partial\alpha_{m-1})=D\overline\partial\partial\alpha_{m-1}+
\frac{2(\lambda+2mq)}{m}\eta \wedge\overline\partial^\ast\alpha_{m}-\frac{2(\lambda+2qn)}{m}\eta\wedge\partial^\ast\alpha_{m}-4i(\lambda+2qn)\alpha_m
\end{matrix}\right.
\end{equation}
where the numbers $A, B, C,$ and $D$ are given by $$A=2\lambda+2qn+q^2+4m(n-m+1),\,\,\, B=2\lambda+q^2+2q(n+1)-4m(m-n)$$ and $$C=2\lambda+q^2+2q(n-1)-4m(m-n),\,\,\, D=2\lambda+2nq+q^2+4(n-m)(m-1).$$ 
From  Theorem \ref{thm:eigenestimatedeltabar}, we know that $\lambda+nq\geq 0$. This shows that $A$ and $D$ are nonnegative, as well as $B+C$. 
On the other hand, the system can be equivalently  written as 
$$\Delta\begin{pmatrix} \alpha_m\\\eta\wedge\overline\partial^*\alpha_m\\\eta\wedge\partial^*\alpha_m\\\overline\partial\partial\alpha_{m-1}\end{pmatrix}=\begin{pmatrix}A&2i&-2i&0\\0&B&0&2m\\-4imq &0&C&-2m\\-4i(\lambda+2nq)&\frac{2(\lambda+2mq)}{m}&-\frac{2(\lambda+2nq)}{m}&D\end{pmatrix}\begin{pmatrix} \alpha_m\\\eta\wedge\overline\partial^*\alpha_m\\\eta\wedge\partial^*\alpha_m\\\overline\partial\partial\alpha_{m-1}\end{pmatrix}.$$
When the forms do not vanish, the eigenvalues of the above matrix are given by 
$$\Theta_0(q,\lambda)=2\lambda-4m^2+4mn+2nq+q^2,$$
which is of multiplicity $2$, and two others  
$$\Theta_\pm(q,\lambda)=2\lambda-4m^2+4mn+4m+2nq-2n+q^2\pm 2\sqrt{2\lambda+(n+q)^2}.$$
Notice that the set $\{\alpha_m,\eta\wedge\overline\partial^\ast\alpha_m, \eta\wedge\partial^\ast\alpha_m,\overline\partial\partial\alpha_{m-1}\} $ is orthogonal for the real $L^2$-product, as a result of 
\begin{equation*}
\langle\langle\alpha_m,\overline\partial\partial\alpha_{m-1}\rangle\rangle=\langle\langle\overline\partial^\ast\alpha_m,\partial\alpha_{m-1}\rangle\rangle\stackrel{\eqref{eq:partialstarpartialalphe}}{=}im||\partial\alpha_{m-1}||^2,
\end{equation*}
which is pure imaginary and
$$\langle\langle\eta\wedge\overline\partial^\ast\alpha_m, \eta\wedge\partial^\ast\alpha_m\rangle\rangle=\langle\langle\overline\partial^\ast\alpha_m,\partial^\ast\alpha_m\rangle\rangle=\langle\langle\partial\overline\partial^\ast\alpha_m,\alpha_m\rangle\rangle\stackrel{\eqref{eq:partialstarpartialalphe}}{=}0.$$
Also, notice that the eigenvalues $\Theta_0(q,\lambda), \Theta_\pm(q,\lambda)$ are nonnegative for all $q$ and $\lambda$. Indeed, we can write $\Theta_0(q,\lambda)=\frac{B+C}{2}\geq 0$ and $\Theta_+(q,\lambda)=D+2n+2\sqrt{2\lambda+(n+q)^2}>0$. For $\Theta_-(q,\lambda)=D+2n-2\sqrt{2\lambda+(n+q)^2}$, we can show by a straightforward computation that 
$$(D+2n)^2-4(2\lambda+2(n+q)^2)=\frac{B+C}{2}(D+4m-4)\geq 0,$$
as a result of $\frac{B+C}{2}=D+4n-4m$, which can be easily proven. 

We now discuss the cases when one of these forms vanishes. 
If $\overline\partial^\ast\alpha_m=0$, then the second equation in the system \eqref{eq:system} gives $\overline\partial\partial\alpha_{m-1}=0$, and the fourth equation reduces to $-\frac{2(\lambda+2qn)}{m}\eta\wedge\partial^\ast\alpha_{m}-4i(\lambda+2qn)\alpha_m=0$. 
As $\alpha_m\neq 0$, we deduce that $\lambda+2qn=0$, and the system becomes 
$$\Delta\begin{pmatrix} \alpha_m\\\eta\wedge\partial^*\alpha_m\end{pmatrix}=\begin{pmatrix}A&-2i\\-4imq&C\end{pmatrix} \begin{pmatrix} \alpha_m\\\eta\wedge\partial^*\alpha_m\end{pmatrix}.$$
In this case, if moreover $\partial^\ast\alpha_m=0$, then $q=\lambda=0$ and the number $A=4m(n-m+1)=\Theta_+(0,0)$, is the only eigenvalue of the system. Otherwise, we get two eigenvalues of the matrix that, after replacing $\lambda+2qn=0$, are given  by
$$-2qn+q^2+4mn-4m^2\quad\text{and}\quad -2qn+q^2-2q+4mn-4m^2+4m,$$
which are respectively $\Theta_0(q,-2qn)$ and $\Theta_+(q,-2qn)$ if $n>q$ (or $\Theta_-(q,-2n)$ if $n<q$). Hence, we deduce that the eigenvalues are $\Theta_+(0,0), \Theta_0(q,-2qn)$ and $\Theta_\pm(q,-2qn)$. 
The second case is when $\partial^\ast\alpha_m=0$. 
The third equation in the system then gives $\overline\partial\partial\alpha_{m-1}=-2iq\alpha_m$. 
Replacing this last identity in the fourth equation of the system  gives after combining with the first equation the following identity $2i(\lambda+4qn)\alpha_m=\frac{\lambda}{m}\eta\wedge\overline\partial^\ast\alpha_m$. 
Taking now the Hermitian product with $\alpha_m$ gives that $\lambda+4qn=0$ and $\lambda\overline\partial^\ast\alpha_m=0$. 
In this case, $\lambda=q=0$ and the only eigenvalues are $B=4m(n-m)$ (if $\overline\partial^\ast\alpha_m\neq 0$) or $A=4m(n-m+1)$ (if $\overline\partial^\ast\alpha_m= 0$). 
We deduce in this case that the eigenvalues are $\Theta_0(0,0)$ and $\Theta_+(0,0)$. 
We are now left with the case when $\overline\partial\partial\alpha_{m-1}=0$. The fourth equation of the system  \eqref{eq:system} yields 
$$\frac{2(\lambda+2mq)}{m}\eta \wedge\overline\partial^\ast\alpha_{m}-\frac{2(\lambda+2qn)}{m}\eta\wedge\partial^\ast\alpha_{m}-4i(\lambda+2qn)\alpha_m=0.$$
Taking the Hermitian product with $\alpha_m$ yields $\lambda+2qn=0$ and $(\lambda+2mq)\overline\partial^\ast\alpha_m=0.$ Hence either $\overline\partial^\ast\alpha_m=0$ and the eigenvalues are $\Theta_0(q,-2qn)$ and $\Theta_+(q,-2qn)$ as previously or $\lambda+2mq=0$, which means that $q=\lambda=0$ or $m=n$. In the first case, the eigenvalues are $B$ (if $\overline\partial^\ast\alpha_m\neq 0$), $C$ (if $\partial^\ast\alpha_m\neq 0$) or $A$ (if $\overline\partial^\ast\alpha_m=0$ and $\partial^\ast\alpha_m=0$). That means the eigenvalues are $\Theta_0(0,0)$ or $\Theta_+(0,0)$. When $m=n$, the eigenvalues are $B=-2nq+q^2+2q$, $-2nq+q^2$, or $-2nq+q^2-2q+4n$, which are $\Theta_-(q,-2nq), \Theta_0(q,-2qn)$ and $\Theta_+(q,-2nq)$.  This finishes the proof. 
\end{proof}

\begin{remark}
\begin{enumerate}
\item It is not difficult to check that if $\Delta_{\overline\partial} f=\lambda f$ with $\xi(f)=iq f$, then $\Delta f=(2\lambda+2nq+q^2)f$. Therefore, we can replace in the above expressions of $\Theta_0(q,\lambda),\Theta_\pm(q,\lambda)$ the number $2\lambda+2nq+q^2$ by an eigenvalue of the scalar Laplacian of $M$. Then, they become  independent of $q$. That means, we can express eigenvalues of the Hodge Laplacian in terms of the ones of the scalar Laplacian independently of $q$. 
\item It is easy to see that the eigenvectors of the transposed matrix above yield the following eigenforms of the Laplacian 
\begin{enumerate}
\item The eigenforms of $\Delta$ corresponding to the eigenvalue $\Theta_0(q,\lambda)$ are linear combinations of the two forms 
\begin{gather*}
iq\alpha_m + \eta\wedge\overline\partial^\ast\alpha_m
+\eta\wedge\partial^\ast\alpha_m\text{ and}\\
i(\lambda+2nq)\alpha_m +2(n-m)\eta\wedge\overline\partial^\ast\alpha_m
+m\overline\partial\partial\alpha_{m-1}.
\end{gather*}

\item The eigenspace of $\Delta$ corresponding to eigenvalue $\Theta_\pm(q,\lambda)$ 
is the span of 
\[
2im(q-n\mp l)\alpha_m 
+(q+n\pm l)\eta\wedge\overline\partial^\ast\alpha_m
+(q-n\mp l)\eta\wedge\partial^\ast\alpha_m
+2m\overline\partial\partial\alpha_{m-1}.
\]
where $l:=\sqrt{2\lambda+(n+q)^2}.$
\end{enumerate}
\end{enumerate}
\end{remark}

\color{black}

\section{Example on the round sphere}\label{exampleroundsphere} 
In this section, we consider the example of the round sphere $\mathbb{S}^3$  equipped with its standard Sasakian structure and show that the $\overline\partial$-cohomologies do not vanish. 
We use the same notation as in \cite{Peyerimhoff1993} and \cite{EgidiGittinsHabibPeyerimhoff2023EigEstMagnSchroLapl}.

Let $\mathbb{S}^3\subset%
\mathbb{C}^2$ be the sphere equipped with the metric $g$ of curvature $1$. The Reeb vector field that defines the Sasakian structure is given by  $%
\xi=\sum_{j=1}^2(-y_j\partial_{x_j}+x_j\partial_{y_j})$ at each point $(z_1,z_2) = (x_1 + y_1 i, x_2 + y_2 i) \in \mathbb{S}^3$.  The three vector fields
\begin{eqnarray*}
\xi=Y_2 &=& -y_1 \partial_{x_1} + x_1 \partial_{y_1} - y_2 \partial_{x_2} + x_2
\partial_{y_2}, \\
Y_3 &=& -y_2 \partial_{x_1} - x_2 \partial_{y_1} + y_1 \partial_{x_2} + x_1
\partial_{y_2}, \\
Y_4 &=& x_2 \partial_{x_1} - y_2 \partial_{y_1} - x_1 \partial_{x_2} + y_1
\partial_{y_2},
\end{eqnarray*}
define a direct orthonormal basis of $T_{(z_1,z_2)}\mathbb{S}^3$. A direct computation of the Christoffel symbols of the Levi-Civita connection gives that
\begin{equation*}  
\nabla_{Y_j} Y_k= \sigma_{jk} Y_l
\end{equation*}
with $\{j,k,l\} = \{2,3,4\}$ for $k \neq j$, $\sigma_{jj} = 0$ and $%
\sigma_{23}=-\sigma_{24} =-1$, $\sigma_{32}=\sigma_{43} = - \sigma_{34} =
-\sigma_{42} = 1$. 
In particular, we deduce that 
\begin{equation*}
d\eta=2 Y_3^\flat \wedge Y_4^\flat,\,\, dY_3^\flat=-2 \eta\wedge Y_4^\flat,\,\,
dY_4^\flat=2\eta\wedge Y_3^\flat\quad\text{and}\quad \delta Y_j=0,
\end{equation*}
for $j\in\{2,3,4\}$. Hence, we get that $\mathcal{L}_\xi
Y_3^\flat=-2Y_4^\flat$ and $\mathcal{L}_\xi Y_4^\flat=2Y_3^\flat$. This
system can be equivalently written as 
\begin{equation}  \label{eq:exteriory2}
d\eta=4i (Y_3^-)^\flat \wedge (Y_3^+)^\flat,\,\, d(Y_3^\pm)^\flat=\mp 2i \eta\wedge (Y_3^\pm)^\flat,\, \quad%
\text{and}\quad \delta (Y_3^\pm)^\flat=0.
\end{equation}
We also have that $\mathcal{L}_\xi (Y_3^+)^\flat=-2i(Y_3^+)^\flat$ and $\mathcal{L}_\xi
(Y_3^-)^\flat=2i(Y_3^-)^\flat$.  Equations \eqref{eq:exteriory2} give that 
\begin{equation*}
\partial (Y_3^+)^\flat=0,\,\, \overline{\partial}(Y_3^+)^\flat=0,\,\, d^{1,1,-1}(Y_3^+)^\flat=0,\,\,
d^{0,0,1}(Y_3^+)^\flat=-2i\eta\wedge (Y_3^+)^\flat,\,\, \overline\partial^\ast (Y_3^+)^\flat=0. 
\end{equation*}
This shows that $(Y_3^+)^\flat\in \mathcal{H}_{\overline\partial}^{0,1,0}(-2)$ and $(Y_3^-)^\flat\in \mathcal{H}_{\overline\partial}^{1,0,0}(2)$. 

Recall that the eigenvalues of the scalar Laplacian are
given by $\ell (\ell +2)$ for $\ell \in\mathbb{N}\cup \{0\}$ with multiplicity $(\ell +1)^2$. The
eigenfunctions are then defined by $\phi_{\ell ,p}:= u^p v^{\ell -p}$, for $p \in
\{0,\ldots, \ell \}$, where $u(z_1, z_2):= az_1+bz_2$ and $v(z_1, z_2):=b \bar z_1 -a\bar z_2$ for some $(a,b)\neq (0,0)$. For all other choices of $\ell , p$, we set $\phi_{\ell ,p} \equiv 0$.
A straightforward computation yields 
\begin{eqnarray}  \label{eq:Y2phi}
Y_2(\phi_{\ell ,p}) &=& i(2p-\ell )\phi_{\ell ,p},  \notag \\
Y_3(\phi_{\ell ,p}) &=& ip\phi_{\ell ,p-1}+i(\ell -p)\phi_{\ell ,p+1},  \notag \\
Y_4(\phi_{\ell ,p}) &=& -p\phi_{\ell ,p-1}+(\ell -p)\phi_{\ell ,p+1}.  \notag
\end{eqnarray}
Therefore, we deduce that 
\begin{eqnarray*}
d\phi_{\ell ,p}&=&i(2p-\ell )\phi_{\ell ,p}\eta+i(p\phi_{\ell ,p-1}+(\ell -p)%
\phi_{\ell ,p+1})(Y_3)^\flat+((\ell -p)\phi_{\ell ,p+1}-p\phi_{\ell ,p-1})(Y_4)^\flat \\
&=&i(2p-\ell )\phi_{\ell ,p}\eta+2ip\phi_{\ell ,p-1}(Y_3^-)^\flat+2i(\ell -p)\phi_{\ell ,p+1}(Y_3^+)^\flat,
\end{eqnarray*}
and, thus, 
\begin{equation*}  
\partial\phi_{\ell ,p}=2ip\phi_{\ell ,p-1}(Y_3^-)^\flat,\,\, \overline{\partial}%
\phi_{\ell ,p}=2i(\ell -p)\phi_{\ell ,p+1}(Y_3^+)^\flat,\,\, d^{1,1,-1}\phi_{\ell ,p}=0,\,\,
d^{0,0,1}\phi_{\ell ,p}=i(2p-\ell )\phi_{\ell ,p}\eta.
\end{equation*}
Hence, we get that $\overline\partial\phi_{\ell ,\ell }=0$, that is, $\phi_{\ell ,\ell }\in \mathcal{H}^{0,0,0}_{\overline\partial}(\ell )$ for $\ell \in\mathbb{N}\cup \{0\}$. As a consequence, we find that $\overline\partial (\phi_{\ell ,\ell }\eta)=0$. From the equality  $$\delta(\phi_{\ell ,\ell }\eta)=\phi_{\ell ,\ell }\delta\eta-\xi(\phi_{\ell ,\ell })=\ell \phi_{\ell ,\ell },$$
we get  $\overline\partial^\ast(\phi_{\ell ,\ell }\eta)=0$ with $\mathcal{L}_\xi(\phi_{\ell ,\ell }\eta)=i\ell \phi_{\ell ,\ell }\eta$. Thus, $\phi_{\ell ,\ell }\eta\in \mathcal{H}_{\overline\partial}^{0,0,1}(\ell ).$
For differential forms of type $(0,1,0)$, we have  $\overline\partial(\phi_{\ell ,p}Y_3^+)=0$ and
$$\delta(\phi_{\ell ,p}(Y_3^+)^\flat)=-Y_3^+(\phi_{\ell ,p})=-ip\phi_{\ell ,p-1}.$$
Hence, $\overline\partial^\ast(\phi_{\ell ,0}(Y_3^+)^\flat)=0$ with $\mathcal{L}_\xi(\phi_{\ell ,0}(Y_3^+)^\flat)=-i(\ell +2)\phi_{\ell ,0}(Y_3^+)^\flat$. Thus, $\phi_{\ell ,0}(Y_3^+)^\flat\in \mathcal{H}_{\overline\partial}^{0,1,0}(-(\ell +2)).$
By taking the conjuguate, we have  $\phi_{\ell ,\ell }(Y_3^-)^\flat\in \mathcal{H}_{\overline\partial}^{1,0,0}(\ell +2).$ 

 It is not difficult to check that $\Delta_{\overline\partial}\phi_{l,p}=2(l-p)(p+1)\phi_{l,p}$. According to Theorem \ref{eigenvalue}, the eigenvalues $\Theta_0=l(l+2), \Theta_-=l^2$ and $\Theta_+=(l+2)^2$ belong to the spectrum. However, it is known that the spectrum of the Hodge operator on the $3$-sphere on $2$-forms (or $1$-forms) is given by the families $\{l(l+2)|\, l\geq 1\}$ and  $\{(l+2)^2|\,\, l\geq 0\}$ \cite{GM75}. 

\section{Appendix:  Dolbeault operators over the cone}\label{appendix}
In this part, we will interpret the formulas for $\partial _{C}$ and $\overline{\partial _{C}}$ over the 
cone $C\left( M\right) =\left( 0,\infty \right) \times M$
of a Sasakian manifold $(M,g,\xi)$. Since it is a K\"ahler manifold, we will relate these formulas to the usual exterior differentials $d^{1,0}$ and $d^{0,1}$. As noted in \cite[Section 6.4]{OrneaVerbitsky2024PrincLocConfKahlGeom} and
in \cite[Section 6.5.3]{BoyerGalicki2008SasakianGeometry}, one can realize
the Sasakian structure as the restriction of the K\"{a}hler structure. The metric
on $C\left( M\right) $ is $g^{C\left( M\right) }=dt^{2}+t^{2}g$, where $t$
is the coordinate on $\left( 0,\infty \right) $. The complex structure on $%
C\left( M\right) $ is defined to be $J:TC\left( M\right) \rightarrow
TC\left( M\right) $ by 
$$J\left( \partial _{t}\right) =\frac{1}{t}j_{\ast }\xi,\,\,\,
J\left( \frac{1}{t}\xi \right) =-\partial _{t},\,\,\, J\left( j_{\ast
}X\right) =j_{\ast }\phi \left( X\right) $$ for 
$X\in \xi ^{\bot }\subseteq
TM $, where $j:M\hookrightarrow C\left( M\right) $ is the inclusion. The K%
\"{a}hler form $\omega ^{C\left( M\right) }$ is defined by $\omega ^{C\left(
M\right) }\left( Y_{1},Y_{2}\right) =g^{C\left( M\right) }\left(
JY_{1},Y_{2}\right) $. We identify forms on $M$ with their pullbacks to $%
C\left( M\right) $. The form $\eta $ on $M$ is the pullback $j^{\ast
}\left( v\lrcorner \omega ^{C\left( M\right) }\right) $, where $v=t\partial
_{t}$ is the (holomorphic) radial vector field on $C\left( M\right) $,
acting by radial homotheties. 

Let $%
p:C\left( M\right) \twoheadrightarrow M$ be the projection $\left(
t,x\right) \mapsto x$. We define the complex frame on  $TC\left( M\right) \otimes \mathbb{C}$ by
\begin{eqnarray*}
Z_{0} &=&\frac{1}{\sqrt{2}}\left(
\partial _{t}-\frac{i}{t}j_{\ast }\xi \right)
\end{eqnarray*}
with its dual 
\begin{eqnarray*}
Z^{0} &=&Z_{0}^{\ast }=\frac{1}{\sqrt{2}}\left( dt+\frac{i}{t}p^{\ast }\eta
\right).
\end{eqnarray*}
We choose a local transverse orthonormal frame $\{e_{1},\phi
e_{1},...,e_{n},\phi e_{n}\}$ on $\xi^\perp$ and let,  for each $k=1,\ldots, n$ the complex frame  $Z_{k}=\frac{1}{\sqrt{2}}\left( e_{k}-i\phi e_{k}\right)$.  Then $\left( Z_{0},\frac{1}{t}j_{\ast }Z_{1},...,\frac{%
1}{t}j_{\ast }Z_{n}\right) $ gives an orthonormal K\"{a}hler frame for $%
T^{1,0}C\left( M\right) $ with associated coframe $\left( Z^{0},
\frac{1}{t}p^{\ast }Z^{1},...,\frac{1}{t}p^{\ast }Z^{n}\right) $.

Now, we consider on $M$ the composition $j^{\ast }\circ d^{1,0}\circ p^{\ast }$ and will prove that this composition is exactly $\partial_C$ defined previously on all differential forms. It is not difficult to check that $d\eta=d^{1,1,-1}\eta=-2i\sum
Z_{j}^{\ast }\wedge \overline{Z_{j}^{\ast }}$. Thus, 
\begin{eqnarray*}
dp^{\ast }\eta &=&p^{\ast }d\eta \\
&=&-2i\sum p^{\ast }Z_{j}^{\ast }\wedge \overline{p^{\ast }Z_{j}^{\ast }}\in
T^{1,1}C\left( M\right).
\end{eqnarray*}
Therefore, we get that 
$$dZ^0=\frac{i}{\sqrt{2}t}d(p^*\eta)=\frac{\sqrt{2}}{t}\sum p^{\ast }Z_{j}^{\ast }\wedge \overline{p^{\ast }Z_{j}^{\ast }}\in
T^{1,1}C\left( M\right).$$ 
Hence, as $Z^0$ is of type $(1,0)$, we deduce that 
$$d^{1,0}Z^0=0\quad\text{and}\quad d^{0,1}Z^0=\frac{\sqrt{2}}{t}\sum p^{\ast }Z_{j}^{\ast }\wedge \overline{p^{\ast }Z_{j}^{\ast }}.$$
Similarly, we show that%
\begin{equation*}
d^{0,1}\overline{Z^{0}}=0 \quad\text{and}\quad d^{1,0}\overline{Z^{0}}=-\frac{\sqrt{2}}{t}\sum p^{\ast }Z_{j}^{\ast }\wedge 
\overline{p^{\ast }Z_{j}^{\ast }}.
\end{equation*}%
Now, by writing $p^*\eta=\frac{t}{i\sqrt{2}}(Z^0-\overline{Z^{0}})$, we compute 
\begin{equation}\label{eq:d10eta}
 d^{1,0}p^*\eta=\frac{t}{i\sqrt{2}}(d^{1,0}Z^0-d^{1,0}\overline{Z^{0}})=\frac{1}{i} \sum p^{\ast }Z_{j}^{\ast }\wedge 
\overline{p^{\ast }Z_{j}^{\ast }}.  
\end{equation}
Now, take any differential $k$-form $\alpha+\eta\wedge\beta$ on $M$ with $\xi\lrcorner\alpha=0$ and $\xi\lrcorner\beta=0$. We compute
\begin{eqnarray*}
j^*d^{1,0}p^*(\alpha+\eta\wedge\beta)&= &j^*d^{1,0}p^*\alpha+j^*d^{1,0}(p^*\eta\wedge p^*\beta) \\
&=& d^{1,0,0}\alpha+j^*d^{1,0}p^*\eta\wedge  j^*p^*\beta-j^*p^*\eta\wedge j^*d^{1,0}p^*\beta \\
&\stackrel{\eqref{eq:d10eta}}{=}&d^{1,0,0}\alpha+\frac{1}{i}\sum Z_{j}^{\ast }\wedge 
\overline{Z_{j}^{\ast }}\wedge \beta-\eta\wedge d^{1,0,0}\beta\\
&=&d^{1,0,0}\alpha+\frac{1}{2}d\eta\wedge \beta-\eta\wedge d^{1,0,0}\beta\\
&=&(d^{1,0,0}+\frac{1}{2}d^{1,1,-1})(\alpha+\eta\wedge\beta).
\end{eqnarray*}
In the last computations, we use the fact that $d^{1,1,-1}(\alpha+\eta\wedge\beta)=d\eta\wedge(\xi\lrcorner(\alpha+\eta\wedge\beta))=d\eta\wedge\beta$. Therefore, $\partial _{C}=d^{1,0,0}+%
\frac{1}{2}d^{1,1,-1}=j^{\ast }d^{1,0}p^{\ast }$ and, similarly, $\overline{\partial _{C}}=d^{0,1,0}+\frac{1}{2}d^{1,1,-1}=j^{\ast }d^{0,1}p^{\ast }$
for all differential forms.


\bibliographystyle{amsplain}
\bibliography{habibRichardsonWolak}

\end{document}